\def\blfootnote{\xdef\@thefnmark{}\@footnotetext}
\newcommand\ccnote{
	\blfootnote{\copyright\,\, Ciprian Demeter, and Hong Wang}
	\blfootnote{\ccLogo\, \ccAttribution\,\, Licensed under a \href{https://creativecommons.org/licenses/by/4.0/}{Creative Commons Attribution License (CC-BY)}.}
}
\numberwithin{equation}{section}
\renewcommand{\le}{\leqslant}
\renewcommand{\leq}{\leqslant}
\renewcommand{\ge}{\geqslant}
\renewcommand{\geq}{\geqslant}
\renewcommand{\mathbb}{\varmathbb}
\newtheorem{theorem}{Theorem}[section]
\newtheorem{defn}[theorem]{Definition}
\newtheorem{remark}[theorem]{Remark}
\newtheorem{lemma}[theorem]{Lemma}
\newtheorem{prop}[theorem]{Proposition}
\newtheorem{cor}[theorem]{Corollary}
\address{Ciprian Demeter, Indiana University, Bloomington, Department of Mathematics,  831 East 3rd St., Bloomington IN 47405}
\email{demeterc@iu.edu}
\address{Hong Wang, Courant institute of mathematical sciences, New York University} 
\email{hw3639@nyu.edu}
\newcommand{\R}{\mathbb{R}}
\newcommand{\Z}{\mathbb{Z}}
\newcommand{\cS}{\mathcal{S}}
\newcommand{\cP}{\mathcal{P}}
\newcommand{\cT}{\mathcal{T}}
\newcommand{\cA}{\mathcal{A}}
\newcommand{\cD}{\mathcal{D}}
\newcommand{\cL}{\mathcal{L}}
\newcommand{\bT}{\mathbf{T}}
\newcommand{\bp}{\mathbf{p}}
\newcommand{\cU}{\mathcal{U}}
\newcommand{\cB}{\mathcal{B}}
\newcommand{\br}{\mathbf{r}}
\newcommand{\cI}{\mathcal{I}}
\newcommand{\cQ}{\mathcal{Q}}
\newtheorem*{ack*}{Acknowledgment}
\begin{document}
	
\thispagestyle{empty}	

\begin{minipage}{0.28\textwidth}
	\begin{figure}[H]
		\includegraphics[width=2.5cm,height=2.5cm,left]{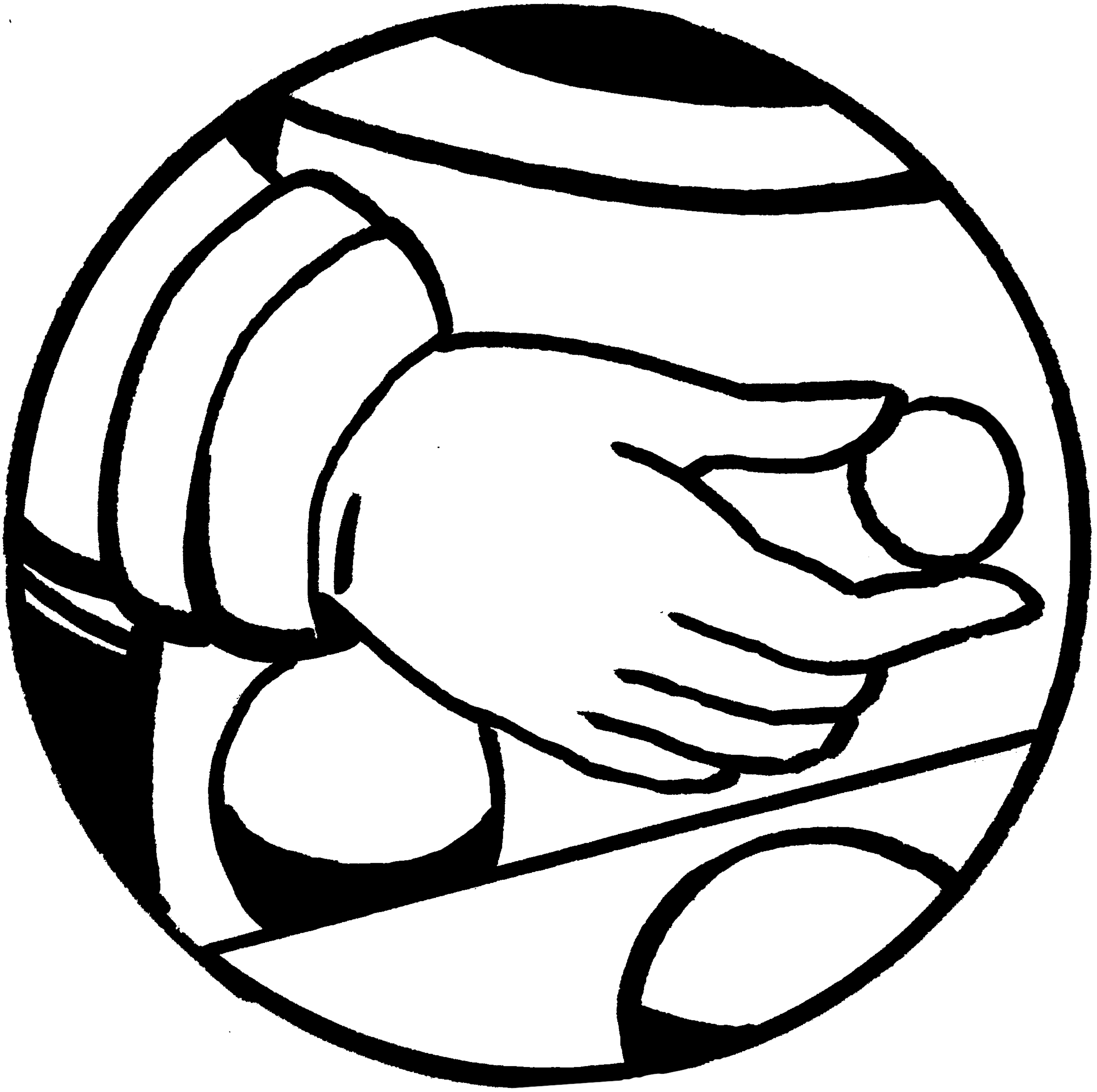}
	\end{figure}
\end{minipage}
\begin{minipage}{0.7\textwidth} 
	\begin{flushright}
		Ars Inveniendi Analytica (2025), Paper No. 1, 46 pp.
		\\
		DOI 10.15781/mt54-gc31
		\\
		ISSN: 2769-8505
	\end{flushright}
\end{minipage}

\ccnote

\vspace{1cm}

\begin{center}
	\begin{huge}
		\textit{Szemer\'edi-Trotter bounds for tubes and applications}

	\end{huge}
\end{center}

\vspace{1cm}


\begin{minipage}[t]{.28\textwidth}
	\begin{center}
		{\large{\bf{Ciprian Demeter}}} \\
		\vskip0.15cm
		\footnotesize{Indiana University, Bloomington}
	\end{center}
\end{minipage}
\hfill
\noindent
\begin{minipage}[t]{.28\textwidth}
	\begin{center}
		{\large{\bf{Hong Wang}}} \\
		\vskip0.15cm
		\footnotesize{Courant Institute, New York University}
	\end{center}
\end{minipage}

\vspace{1cm}


\begin{center}
	\noindent \em{Communicated by Larry Guth}
\end{center}
\vspace{1cm}


\noindent \textbf{Abstract.} \textit{We prove sharp estimates for incidences involving planar tubes that satisfy packing conditions. We apply them to improve the estimates for the Fourier transform of fractal measures supported on planar curves.	}
\vskip0.3cm

\noindent \textbf{Keywords.} tube incidences, packing conditions, Fourier transform of measures
\vspace{0.5cm}


	\section{Introduction}
	
	Given a collection of $\delta\times 1$ tubes in $[0,1]^2$ we will denote by $\cP_r(\cT)$ ($\cP_{\gtrsim r}(\cT)$) the collection of all $\delta$-squares in  $\cD_\delta=\{[n\delta,(n+1)\delta]\times[m\delta,(m+1)\delta]:\;0\le n,m\le \delta^{-1}-1\}$ that intersect $\sim r$ ($\gtrsim r$) of the tubes.
	
	Here is our main result (see also its more general version, Theorem \ref{thm: mainbettt}). We refer the unfamiliar reader to Section \ref{sec2} for terminology.

	\begin{theorem}\label{thm: main} Let $s\in (0, 1/2]$.
		Suppose $\Lambda\subset \mathbb{S}^1$ is a $(\delta, s)$-set consisting of $\delta$-intervals (arcs) with cardinality $|\Lambda|\sim \delta^{-s}$. Suppose that for each $\delta$-arc $\theta\in \Lambda$ there is a $(\delta, 1-s)$-set $\cT_{\theta}$ of $\delta$-tubes in the direction (normal to)  $\theta$, with $|\cT_{\theta}|\sim \delta^{-1+s}$.  Let $\cT=\cup_{\theta\in \Lambda} \cT_{\theta}$.
		
		Then for each  $\upsilon>0$ there is $C_\upsilon>0$ such that for each $1\le r\lesssim \delta^{-s}$ we have
		\begin{equation}
			\label{Mmaineq}
			|\cP_r(\cT)|\le C_\upsilon\delta^{-\upsilon}\frac{|\cT|^2}{r^3}.
		\end{equation}
	\end{theorem}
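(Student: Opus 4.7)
I would argue by induction on the scale $\delta$, assuming the estimate at every larger dyadic scale and losing only $\log^{O(1)}(1/\delta)$ per step, so that iterating over $O(\log 1/\delta)$ scales produces the required $\delta^{-\upsilon}$. The base case at a fixed scale $\delta_0(\upsilon)$ is trivial. After standard dyadic pigeonholing I may assume the configuration is uniform; in particular, each $r$-rich $\delta$-square $p$ is met by $\sim r$ tubes from $\sim r$ distinct directions, because parallel $\delta$-tubes in a single $\cT_\theta$ are $\delta$-separated and hence at most $O(1)$ of them can meet a given $\delta$-square, and each surviving tube contains $\sim M$ rich squares for a common value $M$.

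\textbf{Two-scale decomposition.} Fix a dyadic intermediate scale $\Delta \in (\delta, 1)$ to be optimized. Cluster $\Lambda$ into $\Delta$-arcs $\bar\Lambda$: the $(\delta, s)$-set property forces $|\bar\Lambda| \sim \Delta^{-s}$ and makes $\bar\Lambda$ a $(\Delta, s)$-set with each $\bar\theta$ containing $\sim (\Delta/\delta)^s$ elements of $\Lambda$. Using the $(\delta, 1-s)$-set property within each $\cT_\theta$, cluster the $\delta$-tubes into $\Delta$-tubes $\bar\cT$ parallel to $\bar\Lambda$, each containing $\sim \Delta/\delta$ of the $\delta$-tubes and with $|\bar\cT| \sim \Delta^{-1}$. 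The coarse data $(\bar\Lambda, \bar\cT)$ then satisfies the hypotheses of Theorem \ref{thm: main} at scale $\Delta$. A fine rich $\delta$-square $p$ lies in a $\Delta$-square $\bar p$ that must be coarse $\bar r$-rich with $\bar r \gtrsim r/(\Delta/\delta)^s$, obtained by pigeonholing the $\sim r$ directions through $p$ among $\bar\Lambda$-clusters of size $\lesssim (\Delta/\delta)^s$. Conversely, inside each such $\bar p$, rescaling $\bar p$ to the unit square produces a new instance of Theorem \ref{thm: main} at scale $\delta/\Delta$ with the \emph{same} parameter $s$: the relevant directions form a $(\delta/\Delta, s)$-set of cardinality $(\Delta/\delta)^s$, and the tubes in each such direction through $\bar p$ form a $(\delta/\Delta, 1-s)$-set of cardinality $(\Delta/\delta)^{1-s}$. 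Invoking the inductive hypothesis at both scales, summing over coarse rich $\bar p$, and optimizing $\Delta = \delta^a$ as a function of $r$ and $s$ should reproduce the bound $\lesssim \delta^{-\upsilon}|\cT|^2/r^3$.

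\textbf{Expected main obstacle.} The dominant difficulty is the careful bookkeeping that ensures both packing conditions self-reproduce under coarsening and localized rescaling with only polylogarithmic loss, so that the inductive step truly closes after optimizing in $\Delta$. A secondary issue is the degenerate regime $\bar r \lesssim 1$ (equivalently $r \lesssim (\Delta/\delta)^s$) in which the coarse inductive count gives no gain; there one must work inside a single coarse tube, relying on the fine induction alone, or instead fall back on a bilinear energy estimate combined with the classical Szemer\'edi--Trotter theorem applied to the $|\bar\Lambda|$ effective directions. The constraint $r \lesssim \delta^{-s}$ from the hypothesis reflects the trivial upper bound $r \leq |\Lambda|$ and should be respected automatically by the induction.
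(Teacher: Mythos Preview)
Your two--scale induction does not close, and the missing ingredient is substantial rather than technical. The crucial unproved claim is that the localized configuration inside a coarse square $\bar p$ again satisfies the hypotheses of the theorem with richness $r'$ large enough that $\bar r\cdot r'\gtrsim r$. Concretely: for a fine $r$--rich square $p$, the set $D_p\subset\Lambda$ of directions through $p$ has $|D_p|\sim r$, but $D_p$ is merely a \emph{subset} of the $(\delta,s)$--set $\Lambda$ and inherits only the Katz--Tao bound $|D_p\cap I|\lesssim |I|^s\delta^{-s}$, not the $(\delta,s)$--set bound $|D_p\cap I|\lesssim |I|^s r$. Hence the local richness satisfies only $r'=|D_p|_{\delta/\Delta}\gtrsim r\Delta^{s}$ and the coarse--direction count only $\bar r\gtrsim r(\delta/\Delta)^{s}$, giving $\bar r\cdot r'\gtrsim r^{2}\delta^{s}$. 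This equals $r$ only at the endpoint $r\sim\delta^{-s}$; for any $r\ll\delta^{-s}$ the product of the two inductive bounds loses a factor $(\delta^{-s}/r)^{3}$ that no choice of $\Delta$ removes. In the extreme case where the $r$ directions through $p$ all lie in a single $(\delta/\Delta)$--arc (allowed whenever $r\lesssim\Delta^{-s}$), one has $r'\sim 1$ and the local step is vacuous. Your ``expected main obstacle'' paragraph anticipates some bookkeeping losses but not this structural failure of the richness to factor.

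The paper's proof confronts exactly this issue and does not proceed by a self--similar two--scale reduction. Instead it (i) applies a new multi--scale decomposition to the bush $\cT_p$ to extract a scale $\rho_0$ at which $\cT_p^{\delta/\rho_0}$ \emph{is} a $(\delta/\rho_0,t_1)$--set with $t_1\le\tfrac12+o(1)$; (ii) performs a two--ends reduction so that a Furstenberg--set estimate (the Ren--Wang theorem) becomes applicable and yields a dichotomy: either the desired Szemer\'edi--Trotter bound, or a quantitative over--concentration $|\cP\cap B_\Delta|\gtrsim(\Delta/\delta)^{2-s+\epsilon}$ at some intermediate $\Delta$; (iii) disposes of the second branch via the high--low Fourier method, which converts over--concentration into richness at a strictly larger scale $\tilde\delta$; and (iv) closes a delicately calibrated bootstrapping on $\cS\cT(\delta)$ in which the losses are $(\tilde\delta/\delta)^{o_\epsilon(1)}$ rather than $\delta^{o_\epsilon(1)}$. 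None of these ingredients --- the Furstenberg input, the high--low step, the two--ends reduction, or the ``all--good--intervals'' decomposition --- appears in your plan, and each is essential precisely to handle the regime $r\ll\delta^{-s}$ where $D_p$ can be badly concentrated.
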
Theorem  \ref{thm: main} can be seen as a tube analogue of the celebrated Szemer\'edi--Trotter Theorem \cite{SzTr}, which asserts that given a family $\cL$ of lines in the plane, the number $|\cP_r(\cL)|$ of points that intersect $\sim r$ of these lines is  $O(\frac{|\cL|^2}{r^3}+\frac{|\cL|}{r})$. Note that in our case $\frac{|\cT|}{r}\lesssim \frac{|\cT|^2}{r^3}$, since $s\le 1/2$, $r\lesssim \delta^{-s}$ and $|\cT|\sim \delta^{-1}$.
	\smallskip
	
	Let us  draw a few comparisons with results in the literature.
	Inequality \eqref{Mmaineq} was proved in \cite{GSW} for well spaced collections consisting of $\sim \delta^{-1}$ many tubes. In particular, Theorem 1.1 in \cite{GSW} with $W=\delta^{-1/2}$ proves our Theorem \ref{thm: main} when $s=\frac12$, in the  case when both $\Lambda$ and each $\cT_\theta$ are well spaced.

	We remark that Theorem \ref{thm: main}  implies (in fact it is equivalent to) the incidence bound
	\begin{equation}
		\label{j rejijio tjuioj}
		\cI(\cP,\cT)=|\{(p,T)\in\cP\times\cT:\;p\cap T\not=\emptyset\}|\lessapprox (|\cP||\cT|)^{2/3},	\end{equation}
	for  arbitrary collections $\cP\subset\cD_\delta$. Without any requirement on $\cT$, the best possible upper bound is $\cI(\cP,\cT)\lesssim \delta^{-1/3}(|\cP||\cT|)^{2/3}$. See \cite{FOP}, Theorem 4.3 in \cite{FuRe}, or Exercise 7.5 in \cite{Dembook} for slightly different arguments.
	
	Inequality \eqref{j rejijio tjuioj} is known to hold when $\cT$ is a $(\delta,\alpha)$-set with cardinality $\sim \delta^{-\alpha}$ and $\cP$ is a $(\delta,\beta)$-set with cardinality $\sim \delta^{-\beta}$, with $\alpha+\beta=3$. This is a particular case of Theorem 4.3 in \cite{FuRe}. When $\alpha=1$, this would require $\cP$ to have essentially maximal cardinality $\delta^{-2}$.
	
	It is also worth comparing our result with Theorem 5.2 in \cite{FuRe}. A particular case of it asserts that if $\cT$ is a $(\delta,1)$-set with cardinality $\sim \delta^{-1}$ and if $\cP_T$ is a $(\delta,1-s,K)$-Katz-Tao set of $\delta$-squares intersecting $T$, then, writing $\cP=\cup_{T\in\cT}\cP_T$,
	\begin{equation}
		\label{djefurfureg}
		\sum_{T\in\cT}|\cP_T|\lessapprox (\delta^{-1}K|\cP||\cT|)^{1/2}.
	\end{equation}
	It is easy to see that if $\cT$ is as in Theorem \ref{thm: main}, then $\cP_T:=\cP_r(\cT)\cap T$ is a $(\delta,1-s,\frac{\log \delta^{-1}}{r\delta^s})$-Katz-Tao set. Indeed, fix a ball $B_\rho$ of radius $\rho$, and estimate $|\cP_T\cap B_\rho|$ by double counting incidences between the squares in $\cP_T$ and their associated bushes
	$$r|\cP_T\cap B_\rho|\lesssim \sum_{\delta\lesssim \eta\lesssim 1:\;dyadic}(\eta/\delta)^s\frac1\eta(\rho\eta/\delta)^{1-s}.$$
	This is because
	
	1. there are $\lesssim (\eta/\delta)^s$ directions of tubes $T'\in\cT$ intersecting $T$ at angle $\sim \eta$
	
	2. each such $T'$ intersects $T$ along $\sim \eta^{-1}$ many $\delta$-squares
	
	3. for each direction there are $\lesssim (\rho\eta/\delta)^{1-s}$ tubes in this direction that intersect $B_\rho$.
	
	Thus  \eqref{djefurfureg} gives
	$$r|\cP_r(\cT)|\lessapprox (\delta^{-1}(r\delta^s)^{-1}|\cP_r(\cT)||\cT|)^{1/2},$$
	which may be written as
	$$|\cP_r(\cT)|\lessapprox \delta^{-s}\frac{|\cT|^2}{r^3}.$$
	This however falls short of proving \eqref{Mmaineq} by the factor $\delta^{-s}$.
	\medskip

	Theorem \ref{thm: main} is sharp in a number of ways.
	
	First, inequality \eqref{Mmaineq} is false when $s>\frac12$, even when $\frac{|\cT|^2}{r^3}$ is replaced with $\frac{|\cT|^2}{r^3}+\frac{|\cT|}{r}$. Indeed, start with a $(\delta,1-s)$-set $\cP\subset \cD_\delta$ of squares intersecting $[0,1]\times \{0\}$, with cardinality $\delta^{s-1}$. Consider also an AD-regular set $\Lambda$ as in Theorem \ref{thm: main}. By this we mean that each $\rho$-arc centered at some $\theta\in\Lambda$ intersects $\sim (\rho/\delta)^s$ arcs in $\Lambda$.  Finally, for each $p\in \cP$, let $\cT_p$  be the collection of $\delta$-tubes passing through $p$, with one tube for each direction in $\Lambda$. The collection $\cT=\cup_{p\in \cP}\cT_p$ is as in Theorem  \ref{thm: main}. An easy computation shows that for each $p\in\cP$ and each $1\le r\lesssim \delta^{-s}$ we have $|\cP_r(\cT_p)|\sim \delta^{-1-s}r^{-\frac{s+1}{s}}$. Thus  $|\cP_r(\cT)|\gtrsim \delta^{-2}r^{-\frac{s+1}{s}}$, and this is much larger than $\delta^{-2}r^{-3}+\delta^{-1}r^{-1}$ when $s>1/2$ and $r\ll \delta^{-s}$. Note also that $\frac{s+1}{s}=3$ when $s=\frac12$.
	\smallskip

	Second, given \eqref{Mmaineq}, the index $1-s$ for the dimension of $|\cT_\theta|$ cannot be replaced with  any other $t$. Indeed, if $t>1-s$ then we have the area estimate $\sum_{T\in\cT}|T|=\delta^{-1-s-t}\gg \delta^{-2}$. A random choice of $\cT$ subject to the constraints in our theorem would lead to the (essentially uniform) scenario where a large fraction of $\cD_\delta$ is in $\cP_r(\cT)$, with $r\approx \delta^{-s}$. It is easy to see that \eqref{Mmaineq} fails for such a configuration. On the other hand,  \eqref{Mmaineq} shows that the number of incidences between tubes in $\cT$ and squares in $\cD_\delta$ is dominated by
	$$\sum_{r\ge 1}\frac{|\cT|^2}{r^2}\lessapprox \delta^{-2(t+s)}.$$
	Since there are $\sim \delta^{-1}|\cT|$ incidences, we find that $t$ must be $\ge 1-s$.
	\smallskip
	
	Third, it was pointed out to us by Joshua Zahl that Theorem \ref{thm: main} is false if $s=1/2$ when the non-concentration requirement on $\Lambda$ is removed, even assuming $\cT$ is a $(\delta,1)$-set with cardinality $\sim \delta^{-1}$. Let us consider the following train track-type of example. The set $\cT$ will consist of $\delta^{-1/2}$ many bushes. Each bush contains $\sim \delta^{-1/2}$ many tubes that intersect an axes parallel rectangle $U$ of height $\delta$ and horizontal length $\delta^{1/2}$. Consecutive rectangles $U$ have  vertical separation $ \delta^{1/2}$ between them. It is easy to see that this $\cT$ is a $(\delta,1)$-set. Each of the $\delta$-squares lying inside one  $U$ is in $\cP_r(\cT)$, with $r\sim \delta^{-1/2}$.
	Since there are $\delta^{-1}$ such squares, this violates \eqref{Mmaineq}. However, $\Lambda$ fails to be a $(\delta,\frac12)$-set, as it  consists of $\delta^{-1/2}$ consecutive $\delta$-intervals.
	\smallskip
	
	Theorem \ref{thm: main} implies the following more general version. This implication is discussed in Section \ref{sec4}, see Proposition \ref{cor: mainKT}. 
	\begin{theorem}\label{thm: mainbettt} Let $s\in (0, 1/2]$.
		Suppose $\Lambda\subset \mathbb{S}^1$ is a $(\delta, s,K_1)$-Katz-Tao set consisting of $\delta$-intervals $\theta$. Suppose that for each  $\theta\in \Lambda$ there is a $(\delta, 1-s,K_2)$-Katz-Tao-set $\cT_{\theta}$ of $\delta$-tubes in the direction (normal to)  $\theta$.  Let $\cT=\cup_{\theta\in \Lambda} \cT_{\theta}$.
		
		Then for each  $\upsilon>0$ there is $C_\upsilon>0$ such that for each $1\le r\lesssim \delta^{-s}$ we have
		\begin{equation}
			\label{Mmaineq}
			|\cP_r(\cT)|\le C_\upsilon\delta^{-1-\upsilon}(K_1K_2)^2\frac{|\cT|}{r^3}.
		\end{equation}
	\end{theorem}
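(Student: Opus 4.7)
My plan is to derive Theorem \ref{thm: mainbettt} from Theorem \ref{thm: main} through a combination of dyadic pigeonholing and a change of scale. The key principle is that a $(\delta,s,K)$-Katz-Tao set becomes a $(\delta_0,s)$-set with constant $O(1)$ once one passes to the coarser scale $\delta_0=\delta K^{1/s}$, because $K(r/\delta)^s=(r/\delta_0)^s$ at that scale.

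First, by a standard dyadic pigeonholing (losing only a factor of $\log(1/\delta)$ that can be absorbed into $\delta^{-\upsilon}$), I would reduce to the case where $|\cT_\theta|\sim M$ is essentially constant over $\theta\in\Lambda$; writing $N=|\Lambda|$, this gives $|\cT|\sim NM$ with $N\le K_1\delta^{-s}$ and $M\le K_2\delta^{-(1-s)}$. Next, I set $\delta_0=\delta\cdot\max\{K_1^{1/s},K_2^{1/(1-s)}\}$ and coarsen: each $\delta$-arc of $\Lambda$ is replaced by the $\delta_0$-arc containing it, and each $\delta$-tube of $\cT_\theta$ is replaced by the $\delta_0$-tube containing it. After augmenting the coarsened data with ghost arcs and tubes to reach the full cardinalities $\sim\delta_0^{-s}$ and $\sim\delta_0^{-(1-s)}$ required by Theorem \ref{thm: main}, one obtains a family $\widetilde{\cT}$ of $\delta_0$-tubes satisfying the hypotheses of Theorem \ref{thm: main} at scale $\delta_0$, with $|\widetilde{\cT}|\sim\delta_0^{-1}$. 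I would then apply Theorem \ref{thm: main} at scale $\delta_0$ with a threshold $r'\sim r/(K_1K_2)$, and transfer the resulting bound on $\delta_0$-squares back to $\delta$-squares using that each $\delta_0$-square contains $(\delta_0/\delta)^2$ of the latter and that the $r$ original $\delta$-tubes through a $\delta$-square $p\in\cP_r(\cT)$ collapse into $\gtrsim r/(K_1K_2)$ distinct $\delta_0$-tubes through the enclosing $\delta_0$-square. Arithmetic should then yield the claimed right-hand side $\delta^{-1-\upsilon}(K_1K_2)^2|\cT|r^{-3}$.

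The main obstacle is that the two natural coarsening scales $\delta K_1^{1/s}$ and $\delta K_2^{1/(1-s)}$ are generally unequal; taking their maximum leaves one of the two Katz-Tao conditions ``under-used,'' and the multiplicities must be separated by hand (direction multiplicity $\sim K_1(\delta_0/\delta)^s$ versus position multiplicity $\sim K_2(\delta_0/\delta)^{1-s}$) to avoid a spurious extra power of $K_1K_2$ appearing in the final constant. A related subtlety is that Theorem \ref{thm: main} sees only $|\widetilde{\cT}|^2\sim\delta_0^{-2}$ and is therefore insensitive to the actual cardinality $|\cT|\sim NM$; recovering the linear $|\cT|$-dependence on the right-hand side requires that the ghost augmentation not inflate the multiplicity at the $\delta$-squares in $\cP_r(\cT)$ beyond a controlled factor. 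In the degenerate regime where $|\cT|$ is so small that the rescaling scheme would be wasteful (e.g.\ $r\lesssim K_1K_2$), the target bound is weaker than the trivial estimate $|\cP_r(\cT)|\le|\cT|\delta^{-1}/r$ and can be handled directly. Reconciling these cases cleanly is where I expect the argument to require the most care.
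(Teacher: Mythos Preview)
Your ``key principle'' contains an arithmetic error. You claim $K(r/\delta)^s=(r/\delta_0)^s$ when $\delta_0=\delta K^{1/s}$, but $(r/\delta_0)^s = K^{-1}(r/\delta)^s$, so in fact $K(r/\delta)^s = K^2(r/\delta_0)^s$: coarsening \emph{worsens} the Katz--Tao constant rather than reducing it to $O(1)$. For a concrete obstruction, let $s=1/2$ and take $\Lambda$ to be the union of $K$ translates (by $0,\delta,\dots,(K-1)\delta$) of a $\delta^{1/2}$-separated set of $\delta^{-1/2}$ arcs; this is a $(\delta,1/2,O(K))$-Katz--Tao set. Coarsened to $\delta_0=\delta K^2$, each clump of $K$ arcs collapses, leaving $|\Lambda_{\delta_0}|\sim\delta^{-1/2}=K\delta_0^{-1/2}$. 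Thus $\Lambda_{\delta_0}$ is still only a $(\delta_0,1/2,K)$-Katz--Tao set, and its cardinality \emph{exceeds} $\delta_0^{-1/2}$ by a factor $K$, so Theorem~\ref{thm: main} does not apply and your ghost-augmentation step goes in the wrong direction.

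Even granting the coarsening, your own bookkeeping shows the problem: Theorem~\ref{thm: main} at scale $\delta_0$ gives only $\delta_0^{-2}/(r')^3$, and pulling back with $(\delta_0/\delta)^2$ and $r'\sim r/(K_1K_2)$ yields at best $(K_1K_2)^3\delta^{-2}/r^3$ --- exponent $3$ on $K_1K_2$ rather than $2$, and no $|\cT|$ factor. The paper (Proposition~\ref{cor: mainKT}) never changes scale. It first uses Lemma~\ref{fromgentoexact} to split $\cT$ into $\sim K_1K_2$ pieces, each an $(O_\epsilon(1),O_\epsilon(1))$-set at the \emph{original} scale $\delta$; combined with Lemma~\ref{diffcoleff} this already recovers the cubic bound. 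The improvement to $(K_1K_2)^2$ and the linear factor $|\cT|$ come from a genuinely different idea: the random rigid motions of Corollary~\ref{KT2}, which embed a small $(O(1),O(1))$-set into a full-size configuration built from \emph{rigid copies of itself}, so that the $r$-rich squares of the piece become $\gtrsim r$-rich squares of the full configuration with controlled overlap. No change of scale reproduces this mechanism.
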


	As an application of Theorem \ref{thm: mainbettt} we  prove the following result.
	\begin{theorem}
		\label{t1}
		Let $\Gamma$ be the graph of a $C^3$ function $\gamma:[-1,1]\to\R$ satisfying the nonzero curvature condition $\min_{-1\le x\le 1}|\gamma''(x)|>0$.
		Let $0<s\le \frac12$. Let $\mu$ be a Borel measure supported on $\Gamma$
		satisfying the Frostman condition
		\begin{equation}
			\label{e4}
			\mu(B(y,r))\lesssim r^s
		\end{equation}
		for each $y\in \R^2$ and each $r>0$. Then  for each ball $B_R\subset\R^2$ of radius $R\ge 1$ and each $\epsilon>0$ we have
		\begin{equation}
			\label{e1}
			\|\widehat{\mu}\|^6_{L^6(B_R)}\lesssim_\epsilon R^{2-2s-\frac{s}{4}+\epsilon}.
		\end{equation}	
	\end{theorem}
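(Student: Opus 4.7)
The strategy is to combine a wave packet decomposition of $\widehat\mu$ with Theorem \ref{thm: mainbettt} and a refined decoupling, at the critical scale $\delta = R^{-1/2}$.

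I would first partition $\Gamma$ into $\delta$-arcs $\theta$ and write $\mu = \sum_\theta \mu_\theta$ with $\mu_\theta = \mu|_\theta$, giving $\mu_\theta(\R^2)\lesssim\delta^s$ by \eqref{e4}. On $B_R$, each $\widehat{\mu_\theta}$ decomposes into wave packets $\widehat{\mu_\theta} = \sum_T f_{\theta,T}$ supported on $R^{1/2}\times R$ tubes $T$ whose long axis is normal to $\Gamma$ at $\theta$; the rescaling $\eta\mapsto R\eta$ converts these into $\delta\times 1$ tubes in $B_1$. After dyadic pigeonholings on the arc mass $\mu_\theta(\R^2)\sim\alpha\lesssim\delta^s$ and on the amplitude $|f_{\theta,T}|\sim\gamma\lesssim\alpha$, the resulting rescaled tube family $\cT$ should satisfy the hypotheses of Theorem \ref{thm: mainbettt}. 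The active directions (Gauss images of the active arcs, well-defined since $\min|\gamma''|>0$) form a $(\delta,s,K_1)$-Katz-Tao set with $K_1\approx\delta^s/\alpha$, which follows from \eqref{e4} and the nonzero curvature. For each direction, the heavy tubes form a $(\delta, 1-s, K_2)$-Katz-Tao set with $K_2\approx\alpha\delta^s/\gamma^2$, a consequence of Plancherel applied to $\widehat{\mu_\theta}$ on each $\rho$-subball of $B_R$ and the Frostman bound at the dual scale $(\rho R)^{-1}$.

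To deduce \eqref{e1}, I would use a refined decoupling (in the spirit of Guth--Iosevich--Ou--Wang) to localize $\|\widehat\mu\|_{L^6(B_R)}^6$ to $R^{1/2}$-balls: the per-ball contribution is controlled by the wave packet multiplicity $r(B)$, which (after rescaling) is the multiplicity of the tube family $\cT$ at the corresponding $\delta$-square. Coupling this with the superlevel-set decomposition $|\widehat\mu|\sim\lambda$ converts the $L^6$ norm into a weighted count of $\cP_r(\cT)$-cells for $r\sim\lambda/\gamma$. Theorem \ref{thm: mainbettt} then yields $|\cP_r(\cT)|\lesssim_\epsilon \delta^{-1-\epsilon}(K_1K_2)^2|\cT|/r^3$, with the computed values $(K_1K_2)^2\approx\delta^{4s}/\gamma^4$ and $|\cT|\approx\delta^{2s-1}/\gamma^2$. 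Optimizing over $\alpha,\gamma,\lambda$ subject to $r\lesssim\delta^{-s}$ produces the target exponent $R^{2-2s-s/4+\epsilon}$; the $R^{-s/4}$ improvement over both the pure decoupling bound $R^{2-2s+\epsilon}$ and the pure triangle-inequality incidence bound $R^{2-3s/2+\epsilon}$ arises from the sharp $(K_1K_2)^2$ factor in Theorem \ref{thm: mainbettt}.

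The main obstacle is two-fold. First, establishing the $(\delta,1-s,K_2)$-Katz-Tao condition at all scales $\rho\in[\delta,1]$ requires carefully tracking the Frostman bound through the Fourier transform on every subball, and matching the expected sharp constant $K_2\approx\alpha\delta^s/\gamma^2$. Second, implementing the refined-decoupling step in a way that pairs cleanly with Theorem \ref{thm: mainbettt} is delicate: one must avoid the lossy triangle bound $|\widehat\mu|\lesssim\gamma r$ on the bulk of $U_\lambda = \{|\widehat\mu|\sim\lambda\}$, yet still capture the multiplicity $r$ in a form to which the incidence theorem applies. The hypothesis $s\le 1/2$ is critical, ensuring that the relevant multiplicities $r\lesssim\delta^{-s}$ lie within the range of applicability of Theorem \ref{thm: mainbettt}.
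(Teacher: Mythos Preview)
Your setup is largely in line with the paper's: the wave-packet decomposition, the dyadic pigeonholing on arc mass and amplitude, and the identification of the Katz--Tao constants $K_1\sim R^{s/2}/M$ and $K_2\sim MR^{(s-5)/2}/\lambda^2$ are exactly what the paper records in \eqref{e12} and \eqref{rifururgurtgrthi90hi90}, and the rich-square count you want is precisely \eqref{dokoregotrgo[p]ohhpoihopyih}. Where your proposal diverges, and where there is a genuine gap, is the step you call ``refined decoupling in the spirit of GIOW.'' That device gives a global bound of the form $\|F\|_{L^6(\cQ_r)}^2\lesssim r^{2/3}\sum_\theta\|F_\theta\|_{L^6}^2$, whose right-hand side does not see $|\cQ_r|$; it therefore cannot be paired with the incidence bound from Theorem~\ref{thm: mainbettt} in the way you suggest. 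Conversely, the only local per-square bound available at the $\theta$-scale is the triangle inequality $\|F\|_{L^6(q)}^6\lesssim r^6\lambda^6 R^{-7/2}$, and combining this with \eqref{dokoregotrgo[p]ohhpoihopyih} yields only $R^{9s/2-10}$, which is weaker than the target $R^{15s/4-10}$.

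The paper resolves this by introducing a \emph{second} cap scale: intermediate caps $\tau$ of width $R^{-1/4}$, together with a new dyadic parameter $P$ counting the number of active $\theta$'s inside each $\tau$. On an $R^{1/2}$-square $q$ one can decouple the $\tau$'s (since $R^{1/2}\cdot R^{-1/4}\gg 1$), and this yields the local bush estimate $\|F\|_{L^6(q)}^6\lesssim_\epsilon R^{\epsilon-7/2}r^3P^2\lambda^6$ (Proposition~\ref{p5}). This combines cleanly with $|\cQ_r|$ to give the second estimate \eqref{lkporkforigoitpgoitopgi[tpo]}. That estimate alone is still not enough when $M$ is small; the paper also records a first estimate \eqref{oirpofiigtigihoyt0ho0} from straight $\theta$-decoupling (no incidence), and the target exponent comes from the \emph{geometric mean} of the two, together with the constraint $MP\lesssim R^{3s/4}$, which is only available once the $\tau$-scale and the parameter $P$ have been introduced. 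Your proposal has no analogue of $\tau$, $P$, the bush estimate, or the two-estimate averaging, and without them the optimization you describe cannot produce the exponent $2-2s-s/4$.
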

	It was conjectured in \cite{O} that the exponent of $R$ should be $2-3s$.
	In the same paper, the exponent $2-2s$ is proved as a consequence of a sharp $L^4$ estimate.
	Also, an epsilon improvement over  $2-2s$ was achieved when $\Gamma$ is the parabola. A similar improvement was recovered by a different argument in \cite{DD}, one that works for all $\Gamma$.
	
	Basic constructions discussed in \cite{O} show that the exponent $2-3s$ is best possible in \eqref{e1} for each $s\le \frac12$, while the exponent $1-s$ is best possible in the range $\frac12\le s\le 1$. The recent paper \cite{O2} proves the expected result with essentially sharp exponent $1-s+\epsilon$ in the range $s\ge \frac23$, when $\Gamma$ is the parabola. The case $s\le \frac12$ is expected to be more difficult, as it entails full square root cancellation.

	For context, Theorem \ref{t1} is of similar strength with the following statement about the $\delta$-energy of a finite $\delta$-separated set of points  $S\subset \Gamma$. This is also a consequence of our argument in the last section.
	\begin{theorem}
		\label{wejfiuriofuriurioeug}	
		Let $s\le \frac12$. Assume that for each $y\in\R^2$ and each $\delta\le r\lesssim 1$ we have  $$|S\cap B(y,r)|\lesssim (r/\delta)^s.$$
		Let $$\mathbb{E}_{3,\delta}(S)=|\{(s_1,\ldots,s_6)\in S^6:\;|s_1+s_2+s_3-s_4-s_5-s_6|\lesssim \delta\}|.$$
		Then $$\mathbb{E}_{3,\delta}(S)\lesssim_\epsilon(\delta^{-s})^{\frac{7}{2}+\frac{1}{4}+\epsilon}.$$
	\end{theorem}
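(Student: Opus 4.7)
The plan is to derive Theorem~\ref{wejfiuriofuriurioeug} from Theorem~\ref{t1} by a direct Fourier-analytic transference that matches the spatial scale $R=\delta^{-1}$ to the $\delta$-discretization of the $3$-energy.

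\emph{Step 1 (build a Frostman measure).} For each $p\in S$ let $\nu_p$ be arclength measure on the sub-arc of $\Gamma$ of length $\delta$ centered at $p$, and set $\mu=\delta^{s-1}\sum_{p\in S}\nu_p$. A direct check using the non-concentration hypothesis on $S$ shows that $\mu$ satisfies \eqref{e4}: for $r\le\delta$, a single arc gives $\mu(B(y,r))\le\delta^{s-1}r\le r^s$ since $s\le 1$; for $r\ge\delta$, at most $\lesssim(r/\delta)^s$ arcs meet $B(y,r)$ (their centers lie in $B(y,2r)$, to which the Frostman hypothesis applies), and each carries mass $\delta^s$, for a total of $\lesssim r^s$. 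Applying Theorem~\ref{t1} with $R=\delta^{-1}$ therefore yields
$$
\|\widehat{\mu}\|^6_{L^6(B_R)}\lesssim_\epsilon\delta^{-2+2s+s/4-\epsilon}.
$$

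\emph{Step 2 (unpack the $L^6$ norm as an energy).} Since $\widehat{\nu_p}(\xi)=\delta\,e^{-2\pi i\xi\cdot p}(1+O(\delta|\xi|))$ for $|\xi|\lesssim\delta^{-1}$, there is a small absolute constant $c$ so that on $B_{cR}$
$$
|\widehat{\mu}(\xi)|\gtrsim\delta^s\Bigl|\sum_{p\in S}e^{-2\pi i\xi\cdot p}\Bigr|.
$$
Expanding the sixth power of the right hand side and integrating against a nonnegative Schwartz bump $\widehat{\psi}$ with $\widehat{\psi}\ge 1$ on $B_{cR}$ and with $\psi$ concentrated in $\{|x|\lesssim\delta\}$, the standard discrete–continuous Plancherel identity gives
$$
\|\widehat{\mu}\|^6_{L^6(B_R)}\;\ge\;\|\widehat{\mu}\|^6_{L^6(B_{cR})}\;\gtrsim\;\delta^{6s}\cdot\delta^{-2}\cdot\mathbb{E}_{3,\delta}(S).
$$
Combining with Step 1 and rearranging yields
$$
\mathbb{E}_{3,\delta}(S)\lesssim_\epsilon\delta^{-15s/4-\epsilon}=(\delta^{-s})^{7/2+1/4+\epsilon/s},
$$
which is exactly the claimed bound after relabeling $\epsilon$.

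\emph{Main obstacle.} With Theorem~\ref{t1} in hand, the argument is essentially standard Fourier bookkeeping; the only point of care is that the pointwise lower bound on $|\widehat{\mu}|$ is only valid on the smaller ball $B_{cR}$, so one must use a smooth rather than sharp cutoff at scale $\delta$ to avoid losing absolute constants in the Plancherel identity. The paper's remark that Theorem~\ref{wejfiuriofuriurioeug} is a consequence of the argument in the last section refers to the equivalent viewpoint in which the tube-incidence machinery behind Theorem~\ref{t1} is run directly on the discrete set $S$ (via Theorem~\ref{thm: mainbettt}), bypassing the auxiliary measure $\mu$ altogether.
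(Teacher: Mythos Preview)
Your overall strategy—build a Frostman measure from $S$, apply Theorem~\ref{t1}, then extract the energy bound—is the right one, and Step~1 is correct. The gap is in Step~2: the pointwise lower bound $|\widehat\mu(\xi)|\gtrsim\delta^s\bigl|\sum_p e^{-2\pi i\xi\cdot p}\bigr|$ on $B_{cR}$ is false. The expansion $\widehat{\nu_p}(\xi)=\delta\,e^{-2\pi i\xi\cdot p}(1+O(\delta|\xi|))$ is correct, but the $O(\delta|\xi|)$ correction depends on $p$ through the tangent direction of $\Gamma$ at $p$. After summing over $p$ the error is $O(c)\,\delta^s|S|$, not $O(c)\,\delta^s|f(\xi)|$; at frequencies where the exponential sum $f(\xi)=\sum_p e^{-2\pi i\xi\cdot p}$ exhibits cancellation (so $|f(\xi)|\ll|S|$), the error swamps the main term. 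Pushing this through the $L^6$ triangle inequality produces an error of size $(c\,\delta^s|S|)^6|B_{cR}|\sim c^8\delta^{-6s-2}$, hence a contribution $\sim c^8\delta^{-6s}$ to the energy bound after multiplying by $\delta^2$—strictly worse than the target $\delta^{-15s/4}$ for every $s>0$ and any fixed $c$. So neither the pointwise bound nor a naive $L^6$ comparison closes.

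There are two clean repairs. First, bypass $f$ entirely and argue by positivity: since $\mu\ge 0$ and $\psi\ge 0$, every cross term in $\|\widehat{\mu_{1/R}}\|_6^6=\|\mu_{1/R}^{*3}\|_2^2$ is nonnegative, and for each $6$-tuple with $|s_1+s_2+s_3-s_4-s_5-s_6|\le c'\delta$ the corresponding term is $\gtrsim\delta^{6s-2}$; this gives $\|\widehat\mu\|_{L^6(B_{KR})}^6+O_N(R^{2}K^{-N})\gtrsim\delta^{6s-2}\,\mathbb E_{3,c'\delta}(S)$, and one finishes by applying Theorem~\ref{t1} at radius $KR$ with $K=R^{\epsilon}$. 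Second—and this is what the paper actually means by ``a consequence of our argument in the last section''—take $\mu=\delta^s\sum_p\delta_p$ so that $\widehat\mu=\delta^s f$ exactly; then \eqref{e4} fails for $r<\delta$, but the proof in Section~\ref{sec9} only ever invokes the Frostman condition at scales $\ge 1/R=\delta$ (through \eqref{e5}--\eqref{e7}, \eqref{e12}, \eqref{rifururgurtgrthi90hi90}), so the argument runs verbatim and yields the desired $L^6$ bound, hence the energy estimate. Either way you end up needing to look inside Section~\ref{sec9} rather than invoking Theorem~\ref{t1} as a pure black box.
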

	The exponent $\frac72+\frac14$ is slightly worse than the one from the estimate
	$$\mathbb{E}_{3}(S)=|\{(s_1,\ldots,s_6)\in S^6:\;s_1+s_2+s_3=s_4+s_5+s_6\}|\lesssim |S|^{\frac72},$$
	proved in \cite{BB} for arbitrary finite subsets of the circle, and then in \cite{BD} for arbitrary finite subsets of the parabola. In both cases, the sharp exponent is conjectured to be $3$.
	
	It is possible that a refinement of our argument would lead to the better exponents $\frac72+\epsilon$ in Theorem \ref{wejfiuriofuriurioeug} and $2-2s-\frac{s}{2}+\epsilon$ in \eqref{e1}. This is certainly true in the AD-regular case, see Remark \ref{jhhdhfuihugururgurguiy}.
	\smallskip
	
	One of the main tools we will use in the proof of Theorem \ref{thm: main} is the following recent result of Kevin Ren and the second author, that solves the Furstenberg set Conjecture.
	\begin{theorem}[\cite{RW}]
		\label{Fur}	
		Let $0<s\le 1$ and $0\le t\le 2$.  Then for each $\eta,\eta_1>0$ there exists $\delta(\eta,\eta_1,s,t)>0$ and $\epsilon(\eta,\eta_1,s,t)>0$ with $\lim_{\eta\to 0\atop{\eta_1\to 0}}\epsilon(\eta,\eta_1,s,t)=0$,  such that the following holds.
		
		Consider a pair $(\cP,\cT)$ consisting of $\delta$-squares and $\delta$-tubes in $[0,1]^2$ with $\delta<\delta(\eta,\eta_1,s,t)$ such that
		\\
		\\
		(a) $\cP$ is a $(\delta,t,\delta^{-\eta})$-set
		\\
		\\
		(b) for each $p\in\cP$ there is a $(\delta,s,\delta^{-\eta_1})$-set $\cT(p)\subset\cT$ of tubes intersecting $p$, with cardinality $\sim M$.	
		
		Then
		\begin{equation}
			\label{irufurfur9fu9}
			|\cT|\gtrsim \delta^{-\min\{t,\frac{s+t}{2},1\}+\epsilon(\eta,\eta_1,s,t)}M.
		\end{equation}
	\end{theorem}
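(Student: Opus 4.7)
The plan is to prove the bound by induction on scale, bootstrapping the exponent $\epsilon$ via a multi-scale projection/incidence argument in the spirit of Bourgain and Orponen--Shmerkin. The three regimes encoded by $\min\{t,(s+t)/2,1\}$ correspond to different incidence mechanisms, but I would attempt to treat them uniformly through a single intermediate-scale decomposition.

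First, I would perform a standard pigeonhole reduction to regularize $|\cT(p)|\sim M$, the tube multiplicities, and the cardinality $|\cP|$ across dyadic levels, preserving the Katz--Tao hypotheses up to $\log(1/\delta)$ losses. Then I would fix an intermediate scale $\rho=\delta^{1/2}$ and pass to the coarse pair $(\cP_\rho,\cT_\rho)$, where $\cP_\rho$ and $\cT_\rho$ are the $\rho$-squares and $\rho$-tubes generated by $\cP$ and $\cT$. Using the non-concentration hypothesis on each $\cT(p)$, one verifies that for most $p$ the coarse family $\cT_\rho(p)$ is a $(\rho,s,\rho^{-\eta'})$-set of cardinality $\sim M/(\rho/\delta)^s$, and $\cP_\rho$ is a $(\rho,t,\rho^{-\eta'})$-set. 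The induction hypothesis at scale $\rho$ then lower-bounds $|\cT_\rho|$. Rescaling inside each $\rho$-tube $T\in\cT_\rho$ produces a second Furstenberg configuration at scale $\delta/\rho=\rho$, and a second application of the induction bounds the number of $\delta$-tubes in $T$. Multiplying the two estimates gives the target bound on $|\cT|$.

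The main obstacle, and the true heart of the matter, is controlling the loss in $\epsilon$ across the iteration: a naive recursion produces only a polylogarithmic improvement. To obtain a power-saving $\delta^{-\epsilon}$ one must, in the critical case where both the coarse and fine estimates are simultaneously saturated, invoke an inverse theorem (Shmerkin's $L^q$-flattening, or its refinement into a discretized projection theorem of Orponen--Shmerkin type) asserting that simultaneous saturation forces an exact multi-scale self-similar structure on the point configuration. This rigid structure can then be violated at a third auxiliary scale chosen to contradict the $(\delta,t,\delta^{-\eta})$-Katz--Tao hypothesis. The delicate bookkeeping across three scales, with $\eta,\eta_1$ chosen to decay geometrically through the induction so that the accumulated loss is a single $\delta^{-o(1)}$ factor, is what makes $\epsilon(\eta,\eta_1,s,t)\to 0$ as $\eta,\eta_1\to 0$, as required by the statement.
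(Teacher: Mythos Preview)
This theorem is not proved in the paper at all: it is quoted from \cite{RW} (Ren--Wang) and used as a black box in Section~\ref{sec5}. The paper explicitly introduces it as ``the following recent result of Kevin Ren and the second author, that solves the Furstenberg set Conjecture,'' and thereafter only invokes the conclusion \eqref{irufurfur9fu9}. So there is no proof in the paper to compare your proposal against.

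As for the proposal itself: the high-level architecture you describe (uniformization, an intermediate scale $\rho=\delta^{1/2}$, induction at the coarse and fine scales, and then an inverse-theorem step to beat the loss accumulation) is indeed the general shape of the argument in \cite{RW} and its predecessors. But your final paragraph is where all the content lies, and it is essentially a placeholder: saying that simultaneous saturation ``forces an exact multi-scale self-similar structure'' which ``can then be violated at a third auxiliary scale'' names the difficulty rather than resolving it. The actual mechanism in \cite{RW} that closes this gap is substantially more intricate than a single three-scale contradiction, and your sketch does not identify what that mechanism is. So the proposal is a correct identification of the strategy and its bottleneck, but not a proof.
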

	The equivalent dual formulation (when $\eta_1=\eta$) of this result assumes $\cT$ is a $(\delta,t,\delta^{-\eta})$-set and each $T\in\cT$ intersects all squares in some $(\delta,s,\delta^{-\eta})$-set $\cP_T\subset \cP$ with cardinality $M$. Note that this forces $M\gtrsim \delta^{-s+\eta}$. Then
	$$
	|\cP|\gtrsim \delta^{-\min\{t,\frac{s+t}{2},1\}+\epsilon(\eta,s,t)}M.$$	
	Our Theorem \ref{thm: main} easily implies (a more general form of) this estimate in the case $t=1$ and $\frac12\le s\le 1$, when $\cT$ has the special structure in Theorem \ref{thm: main} (with $s$ replaced by $1-s$).
	\begin{cor}Let $\eta>0$ and let $\frac12\le s\le 1$. The following is true for  $\delta<\delta(\eta)$.
		
		Assume  $\cT=\cup_{\theta\in\Lambda}\cT_\theta$, where $\Lambda$ is a $(\delta,1-s)$-set with size $\sim \delta^{s-1}$, and each $\cT_\theta$ is a $(\delta,s)$-set of tubes in the direction $\theta$, with size $\sim \delta^{-s}$. Consider also a set $\cP$ such that each $T\in\cT$ intersects  $\sim M$ squares $\cP_T\subset\cP$. Note that we do not make any non-concentration assumption on $\cP_T$.
		
		Then $$|\cP|\gtrsim \delta^{-\frac{1}{2}+\eta}M^{3/2}.$$
	\end{cor}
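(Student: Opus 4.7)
The plan is to derive the corollary from Theorem \ref{thm: main} by a single dyadic pigeonhole on point-tube incidences. The hypotheses on $\cT$ match those of Theorem \ref{thm: main} with the parameter $s$ there replaced by $s':=1-s\in(0,1/2]$: $\Lambda$ is a $(\delta,s')$-set with $|\Lambda|\sim\delta^{-s'}$, each $\cT_\theta$ is a $(\delta,1-s')$-set with $|\cT_\theta|\sim\delta^{-(1-s')}$, and $|\cT|\sim\delta^{-1}$. Consequently, for every $\upsilon>0$,
$$|\cP_r(\cT)|\lessapprox \delta^{-2}r^{-3}\qquad\text{whenever }1\le r\lesssim \delta^{s-1},$$
where $\lessapprox$ absorbs the $\delta^{-\upsilon}$ loss from Theorem \ref{thm: main}.

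For each $p\in\cP$, set $r(p):=|\{T\in\cT:p\cap T\neq\emptyset\}|$. Since each $\cT_\theta$ is a $(\delta,s)$-set, at most $O(1)$ tubes per direction meet any single $\delta$-square, so $r(p)\lesssim|\Lambda|\sim\delta^{s-1}$. Counting incidences in two ways,
$$\sum_{p\in\cP}r(p)=\sum_{T\in\cT}|\cP_T|\sim M\delta^{-1}.$$
A dyadic pigeonhole over the $O(\log\delta^{-1})$ relevant scales of $r(p)$ produces some $r_0\in[1,C\delta^{s-1}]$ and a subset $\cP^{r_0}:=\{p\in\cP:r(p)\sim r_0\}\subseteq\cP\cap\cP_{r_0}(\cT)$ with
$$r_0\,|\cP^{r_0}|\gtrapprox M\delta^{-1}.$$

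Combining this lower bound $|\cP^{r_0}|\gtrapprox M\delta^{-1}r_0^{-1}$ with the upper bound $|\cP^{r_0}|\le|\cP_{r_0}(\cT)|\lessapprox \delta^{-2}r_0^{-3}$ from Theorem \ref{thm: main} forces $r_0\lessapprox \delta^{-1/2}M^{-1/2}$. Substituting back,
$$|\cP|\ge |\cP^{r_0}|\gtrapprox \frac{M\delta^{-1}}{r_0}\gtrapprox \delta^{-1/2}M^{3/2},$$
and the aggregated $\delta^{\pm\upsilon}$ and $\log\delta^{-1}$ losses can be absorbed into $\delta^{\eta}$ by choosing $\upsilon$ small relative to $\eta$ and taking $\delta<\delta(\eta)$, which gives $|\cP|\gtrsim\delta^{-1/2+\eta}M^{3/2}$. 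There is no genuine obstacle in the argument: it is a textbook dyadic repackaging, with all nontrivial content carried by Theorem \ref{thm: main}.
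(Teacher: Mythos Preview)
Your proof is correct and follows essentially the same strategy as the paper's: double-count incidences and feed the rich-set bound from Theorem \ref{thm: main} back into the count. The only cosmetic difference is that the paper fixes a threshold $L\sim(\delta M)^{-1/2}\delta^{-\eta}$ and splits the dyadic sum $\sum_r r|\cP_r(\cT)\cap\cP|$ into $r\ge L$ (controlled by Theorem \ref{thm: main}) and $r<L$ (controlled by $L|\cP|$), whereas you pigeonhole to a single dominant level $r_0$ and then bound $r_0$; these are interchangeable standard maneuvers leading to the same conclusion.
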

	\begin{proof}
		We double count the incidences.
		We have, with $L\sim (\delta M)^{-\frac{1}{2}}\delta^{-\eta}$
		\begin{align*}
			\delta^{-1}M\le \cI(\cP,\cT)&\lesssim \sum_{r\text{: dyadic}}r|\cP_r(\cT)\cap \cP|\\&=\sum_{r\text{: dyadic }\ge L}r|\cP_r(\cT)|+\sum_{r\text{: dyadic }<L}r|\cP|\\&\lesssim \delta^{-\eta}\sum_{r\text{: dyadic }\ge L}r\frac{\delta^{-2}}{r^3}+L|\cP|\\&\lesssim \delta^{-\eta-2}L^{-2}+L|\cP|.
		\end{align*}
		We have used Theorem \ref{thm: main} for the first term. Our choice of $L$ (and small $\delta$) implies that the second term must dominate, leading to
		$$|\cP|\gtrsim \delta^{-1}M/L\gtrsim \delta^{-\frac{1}{2}+\eta}M^{3/2}.$$
		
		When $\cP_T$ is also a $(\delta,s_0,\delta^{-\eta})$-set for some $s_0\in (0,1)$, this lower bound is in fact better than the one in the Furstenberg set problem
		$$|\cP|\gtrsim \delta^{-\frac{s_0+1}2+\epsilon}M.$$	
		This is because $M$ could potentially be much larger than $\delta^{-s_0}$.
		
	\end{proof}
	\medskip
	
	\textbf{Strategy:} In Section \ref{sec2} we recall the essential concepts and basic tools we will employ throughout the paper. Most of them were known in the form we present them. One example is the multi-scale decomposition in Lemma \ref{Lipdec}, that produces both good and bad intervals. However, we also prove a new version of it, Lemma  \ref{lem: goodinterval2},  that delivers only good intervals. This will be used in the form of Theorem \ref{justonegood}.
	
	Section \ref{sec3} presents a few probabilistic constructions that are  used in Section \ref{sec4} to relate Theorem \ref{thm: main} to more general versions of it. They will in turn be used in the main argument in Section \ref{sec7}.
	
	In Section \ref{sec5}, the main theorem is analyzed in the simpler context when the tubes intersecting the $r$-rich squares have certain regularity. It is this part only that makes use of the estimate \eqref{irufurfur9fu9} on the size of Furstenberg sets. Even in this special setting, Theorem \ref{thm: delta-s} only delivers a dichotomy.
	In one case, it gives (a superficially stronger form of)  the  desired Szemer\'edi--Trotter-type bound. The second possible scenario is the high concentration of the rich squares at a certain scale $\Delta$. In Section \ref{sec6} we use Fourier analysis to show how this scenario leads to an abundance of tubes intersecting squares  of a certain scale larger than $\delta$.
	
	To make Theorem \ref{thm: delta-s} applicable, in Section \ref{sec:new} we use Lemma  \ref{lem: goodinterval2} and a combinatorial argument to create rich subcollections of two-ends tubes.
	
	All these ingredients will be fed into the induction on scales argument presented in Section \ref{sec7}. Instead of using the full  multi-scale decomposition, this latter argument relies on a two-scale increment provided by Theorem  \ref{justonegood}. It is in this part of the argument that the novel decomposition in Theorem  \ref{justonegood} is proving to be crucial. Our main assumption $s\le \frac12$ will only be used in Step 3 of the argument in Section \ref{sec7}.
	
	In Section \ref{sec9} we combine the incidence bound in Theorem \ref{thm: main} with decoupling to prove Theorem \ref{t1}.
	
	Most of the arguments are extremely delicate, as we need to keep track of various epsilons and scales, and to sharply quantify losses in a way that makes induction on scales viable.
	\\
	\\
	\textbf{Notation:}
	For positive  quantities $A,B$, typically depending on the scale $\delta$, we write either $A\lesssim B$ or $A=O(B)$ if there is a universal constant $C$ independent of the scale $\delta$ such that $A\le CB$. The notation $A\sim B$ means $A\lesssim B\lesssim A$.

	We write $o_\epsilon(1)$ for a not necessarily positive quantity (e.g. $\epsilon^{1/2}$, $-100\epsilon$) that goes to zero, as $\epsilon$ goes to zero. In particular $O(\epsilon)$ implies $o_\epsilon(1)$, but not vice-versa. Whenever we use $o_\epsilon(1)$, it will mean that the sign of this quantity is irrelevant.

	We write $A\sim B\delta^{o_\epsilon(1)}$ if $$A/B,B/A\lesssim \delta^{o_\epsilon(1)},$$
	with the implicit constants depending on $\epsilon$, but not on $\delta$.
	
	The symbol $\lessapprox$ will be reserved only to denote arbitrarily small $\delta^{-\upsilon}$ losses, such as  $\log(1/\delta)^{O(1)}$ losses.

	\begin{ack*}
		We are grateful to Tuomas Orponen and Joshua Zahl   for helpful conversations. We also thank the referee, whose careful reading and comments have led to improving the presentation. The first author is  partially supported by the NSF grants DMS-2055156 and  DMS-2349828. The second author is supported by NSF CAREER DMS-2238818
		and NSF DMS-2055544
	\end{ack*}
	\section{Preliminaries}
	\label{sec2}
	The reader may assume that all scales $\delta$, $\rho$, etc are dyadic, meaning that they are in $2^{-\mathbb{N}}$. Smaller $\delta$-squares are either inside or disjoint from a given larger $\rho$-square.
	We start by reviewing standard concepts and terminology.

	For a set $A$ in a metric space, we denote by  $|A|_\rho$ the smallest number of balls of radius $\rho$ needed to cover $A$. Typically for us, $A$ will be a disjoint union of  either $\delta$-intervals (in $\R$ or $\mathbb{S}^1$), $\delta$-squares, or $\delta$-tubes, for some $\delta\le \rho$. There is a map sending points to lines
	$$(a,b)\mapsto\{(x,y):\;y=ax+b\}$$
	that generates a metric on the space of lines. $\delta$-tubes are (length one segments of) images of $\delta$-squares under this map. We denote by $\cD^{dual}_\delta$ the collection of $\delta$-tubes $T$.
	
	Due to the loose distinction between relations such as $p\cap T\not=\emptyset$ and $p\subset CT$ (with $C=O(1)$), we will think of $\delta$-tubes as $O(\delta)$-neighborhoods (in the Euclidean metric) of unit line segments.
	
	If $\cT$ are $\delta$-tubes, the quantity $|\cT|_\rho$ is comparable with the cardinality of the smallest family of $\rho$-tubes that covers all $T\in\cT$.
	
	In all these instances, if $\rho=\delta$, then $|A|_\delta$ coincides with the cardinality $|A|$ of $A$, understood as a collection of intervals, squares, tubes. We will interchange the use of $|A|_\delta$ and $|A|$ throughout the arguments.

	\begin{defn}[$(\delta, s,C)$-set ]
		For  $s\in (0, d]$, a non-empty set $A\subset \R^d$ with diameter $\lesssim 1$ is called a $(\delta, s,C)$-set if
		\[
		|A\cap B(x,r)|_{\delta}\le C r^s |A|_{\delta}, \quad \forall x\in \R^d, r\in [\delta, 1].
		\]
	\end{defn}
	Testing the inequality with $r\sim 1$ shows that $C\gtrsim 1$. When $C\sim 1$, we simply call it a $(\delta,s)$-set.
	
	\begin{defn}[$(\delta,s, K)$-Katz-Tao set]
		For  $s\in (0, d]$, a non-empty  set $A\subset \R^d$ with diameter $\lesssim 1$ is called a $(\delta, s, K)$-Katz-Tao set if
		\[
		|A\cap B(x,r)|_{\delta}\le K(r/\delta)^s, \quad \forall x\in \R^d, r\in [\delta, 1].
		\]\end{defn}
	Testing the inequality with $r\sim\delta$
	shows that $K\gtrsim 1$.
	It will always be implicitly assumed that $K=O(\delta^{s-d})$, as each set is trivially Katz-Tao with respect to any larger $K$.

	Let $\delta\le \rho'\le \rho$.
	Given $\cP\subset \cD_\delta$ we write $\cP_\rho=\{\bp\in\cD_\rho:\;\bp\text{ contains some }p\in\cP\}$. Note that $\cP_\delta=\cP$. Given $\bp\in\cD_{\rho}$ we write
	$$\cP_{\rho'}[\bp]=\{\bp'\in\cP_{\rho'}:\;\bp'\subset \bp\}$$
	and simply
	$$\cP[\bp]=\{p\in\cP:\;p\subset \bp\}.$$
	Given  $\cT\subset\cD_\delta^{dual}$ we write $\cT^\rho=\{\bT\in\cD_\rho^{dual}:\;\bT\text{ contains some }T\in\cT\}$. Given $\bT\in\cD_{\rho}^{dual}$ we write
	$$\cT^{\rho'}[\bT]=\{\bT'\in\cT^{\rho'}:\;\bT'\subset \bT\}$$
	and simply
	$$\cT[\bT]=\{T\in\cT:\;T\subset \bT\}.$$
	
	One of the typical defects of $\delta$-sets is that they may have different concentrations inside disjoint balls of a given radius. Another one is that, while it is not hard for a set $S$ to be a $(\delta,s, O(1))$-set for $s$ close enough to zero, the size of $S$ may be much larger than $\delta^{-s}$. The next definition and Lemma  \ref{largeunifset} repair the first defect, and take a step towards fixing the second one.

	\begin{defn}[Uniform sets]
		\label{defunif}
		Given $\epsilon>0$, let $T_\epsilon$ satisfy $T_\epsilon^{-1}\log(2T_\epsilon)=\epsilon$. Given $0<\delta\le 2^{-T_\epsilon}$, pick the largest natural number $m$ such that $\frac1m\log_2\delta^{-1}\ge T_\epsilon$ and let $T=T(\delta,\epsilon)=\frac1m\log_2\delta^{-1}$. Note that $T_\epsilon\le T\le 2T_{\epsilon}$, and $\delta=2^{-mT}$.

		A set
		$\cP\subset \cD_\delta$ is called $\epsilon$-uniform if for each  $\rho=2^{-jT}$, $0\le j\le m$  and each $\bp_1,\bp_2\in\cP_\rho$  we have that
		$$|\cP[\bp_1]|=|\cP[\bp_2]|.$$
		A set $\cT\subset \cD_\delta^{dual}$ is called $\epsilon$-uniform if  for each  $\rho=2^{-jT}$, $0\le j\le m$ and each $\bT_1,\bT_2\in\cT^\rho$  we have that
		$$|\cT[\bT_1]|= |\cT[\bT_2]|.$$\end{defn}
	Given the correspondence between squares and tubes, the two definitions are in sync with each other. We will typically formulate auxiliary results for squares, but they hold equally well for tubes. A similar definition also holds for intervals on the real line, or on $\mathbb{S}^1$.
	
	From now on, it will always be implicitly assume that $\delta$ is small enough, depending on the context.
	\begin{lemma}
		\label{goodallscales}	
		Given $\epsilon>0$, there is $C(\epsilon)>0$ such that the following holds. 	
		For each $\epsilon$-uniform set $\cP\subset \cD_\delta$, each  $\delta\le\rho\le 1$, and for each $\bp_1,\bp_2\in\cP_\rho$  we have that
		$$C(\epsilon)^{-1}|\cP[\bp_1]|\le |\cP[\bp_2]|\le C(\epsilon)|\cP[\bp_1]|.$$
	\end{lemma}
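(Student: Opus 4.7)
The plan is to compare an arbitrary scale $\rho$ to the two nearest ``good'' scales in the geometric progression $\{2^{-jT}:0\le j\le m\}$ at which $\epsilon$-uniformity holds. Since $T\le 2T_\epsilon$, the multiplicative gap between consecutive good scales is at most $2^T\le 2^{2T_\epsilon}$, a constant depending only on $\epsilon$. So a quick sandwich between the two adjacent good scales should suffice.

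More precisely, given $\rho\in[\delta,1]$, I would choose $j\in\{0,1,\dots,m-1\}$ so that $\rho_1:=2^{-jT}\ge \rho> 2^{-(j+1)T}=:\rho_2$ (the extreme cases $\rho=1$ and $\rho=\delta$ are trivial since $\cP_\rho$ is a single square or equal to $\cP$ itself). Note that both $\rho_1,\rho_2\in[\delta,1]$. By $\epsilon$-uniformity, there are integers $N_1,N_2$ such that $|\cP[\bq]|=N_1$ for every $\bq\in\cP_{\rho_1}$ and $|\cP[\bq']|=N_2$ for every $\bq'\in\cP_{\rho_2}$.

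For $\bp\in\cP_\rho$, let $\bp^{\wedge}$ be the unique square of $\cD_{\rho_1}$ containing $\bp$; since $\bp$ contains some $\delta$-square of $\cP$, so does $\bp^{\wedge}$, hence $\bp^{\wedge}\in\cP_{\rho_1}$. Writing $n(\bp)$ for the number of $\bp'\in\cP_{\rho_2}$ with $\bp'\subset\bp$, a decomposition of $\cP[\bp]$ according to the containing $\rho_2$-square gives
\[
|\cP[\bp]|=n(\bp)\cdot N_2.
\]
On one hand $n(\bp)\ge 1$ because $\bp\in\cP_\rho$ forces at least one $\bp'\in\cP_{\rho_2}$ inside $\bp$; on the other hand $n(\bp)$ is bounded above by the total number of squares of $\cP_{\rho_2}$ contained in $\bp^{\wedge}$, which equals $N_1/N_2$. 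Consequently
\[
N_2\ \le\ |\cP[\bp]|\ \le\ N_1.
\]

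Applying this to both $\bp_1,\bp_2\in\cP_\rho$ yields
\[
\frac{|\cP[\bp_1]|}{|\cP[\bp_2]|}\ \le\ \frac{N_1}{N_2}\ \le\ (\rho_1/\rho_2)^2\ =\ 2^{2T}\ \le\ 2^{4T_\epsilon},
\]
where the middle inequality just bounds the number of $\rho_2$-squares inside a $\rho_1$-square. Setting $C(\epsilon):=2^{4T_\epsilon}$ finishes the proof; no step is genuinely hard, the only thing to be careful about is verifying that $\rho_1,\rho_2$ always land in the dyadic range $\{2^{-jT}\}_{0\le j\le m}$ at which uniformity is postulated, which I handle by treating the boundary cases $\rho=1$ and $\rho=\delta$ separately.
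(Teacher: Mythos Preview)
Your proof is correct and follows essentially the same approach as the paper's: sandwich the arbitrary scale $\rho$ between the two adjacent uniformity scales $2^{-jT}$ and $2^{-(j-1)T}$, and use that the number of squares at the smaller scale inside one at the larger scale is at most $2^{2T}\le 2^{4T_\epsilon}$. The paper's version is terser (it only explicitly invokes the smaller adjacent scale, bounding $|\cP[\bp]|$ between $N_2$ and $2^{2T}N_2$), but the idea and the resulting constant $C(\epsilon)=2^{4T_\epsilon}$ are identical.
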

	\begin{proof}
		For each $2^{-jT}<\rho<2^{-(j-1)T}$, we use the fact that each $\rho$-square contains at most $2^{2T}$ many $2^{-jT}$-squares. Thus the inequalities are satisfied with the constant $2^{2T}$. We take
		$C(\epsilon)=2^{4T_\epsilon}$, since this only depends on $\epsilon$.

	\end{proof}
	
	\begin{remark}
		\label{approxunifsets}	
		Lemma \eqref{goodallscales} shows that each $\epsilon$-uniform set is also an approximate $\epsilon$-uniform set. By this we mean that the equality in Definition \ref{defunif} is replaced with a double inequality involving a constant $C(\epsilon)$. We will apply this observation  with $\upsilon=\upsilon(\epsilon)$. What matters is that $C(\epsilon)=C(\epsilon^{-1}(\upsilon))= O_\upsilon(1)$, meaning that the constant only depends on $\upsilon$. All results and arguments continue to hold for such an approximate $\upsilon(\epsilon)$-uniform set, and the implicit constants will carry further dependence on $\upsilon(\epsilon)$.
	\end{remark}

	\begin{lemma}
		\label{tradim}	
		If $\cP$  is an $\epsilon$-uniform $(\delta,s, C_1)$-set and $\delta\le \rho$, then $\cP_\rho$ is a $(\rho,s, C(\epsilon)C_1)$-set. Moreover, its cardinality satisfies $|\cP_\rho|\gtrsim C_1^{-1}\rho^{-s}$.
	\end{lemma}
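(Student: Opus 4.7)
The plan is to bootstrap the nonconcentration inequality at scale $\delta$ up to scale $\rho$ by exploiting that, under $\epsilon$-uniformity, every $\rho$-parent square $\bp\in\cP_\rho$ has essentially the same number of $\delta$-children from $\cP$. More precisely, Lemma \ref{goodallscales} gives a constant $C(\epsilon)$ such that
\[
|\cP[\bp]|\sim_{C(\epsilon)} M\quad\text{for every }\bp\in\cP_\rho,
\]
where $M=|\cP|/|\cP_\rho|$. Thus passing from $\cP$ to $\cP_\rho$ is essentially a rescaling, and the $(\delta,s,C_1)$ bound on $\cP$ should translate directly into a $(\rho,s,C(\epsilon)C_1)$ bound on $\cP_\rho$, up to the constant $C(\epsilon)$ that measures the multiplicative defect in uniformity.

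To prove the first assertion, fix a ball $B=B(x,r)$ with $\rho\le r\le 1$. Any $\rho$-square $\bp\in\cP_\rho$ that meets $B$ is contained in the $O(\rho)$-neighborhood, hence in $B(x,2r)$ since $r\ge \rho$. Therefore
\[
\sum_{\substack{\bp\in\cP_\rho\\ \bp\cap B\ne\emptyset}}|\cP[\bp]|\;\le\;|\cP\cap B(x,2r)|_\delta\;\le\;C_1(2r)^s|\cP|.
\]
Using the lower bound $|\cP[\bp]|\gtrsim C(\epsilon)^{-1}M=C(\epsilon)^{-1}|\cP|/|\cP_\rho|$ from uniformity on the left hand side yields
\[
|\cP_\rho\cap B(x,r)|_\rho\;\lesssim\;C(\epsilon)C_1\,r^s\,|\cP_\rho|,
\]
which is exactly the desired $(\rho,s,C(\epsilon)C_1)$-set property (after absorbing the $2^s$ into the constant).

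For the cardinality bound, I would not use uniformity at all. Applying the $(\delta,s,C_1)$-set inequality at scale $r\sim \rho$ centered on any $\bp\in\cP_\rho$ gives
\[
|\cP[\bp]|\;\le\;|\cP\cap B(x_\bp,O(\rho))|_\delta\;\le\;C_1\rho^s|\cP|.
\]
Summing over $\bp\in\cP_\rho$ and using the trivial identity $|\cP|=\sum_{\bp\in\cP_\rho}|\cP[\bp]|$ gives $|\cP|\le |\cP_\rho|\cdot C_1\rho^s|\cP|$, hence $|\cP_\rho|\gtrsim C_1^{-1}\rho^{-s}$.

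The only real subtlety is a bookkeeping one: making sure that when $\bp$ meets $\partial B(x,r)$ we really do gain that $\bp\subset B(x,2r)$ — this relies on the hypothesis $r\ge \rho$ and is the reason the bound is only claimed for $r\ge \rho$. Everything else is a direct consequence of $\epsilon$-uniformity, whose loss is quantified by the dimension-free (depending only on $\epsilon$) constant $C(\epsilon)$ produced in Lemma \ref{goodallscales}, as required.
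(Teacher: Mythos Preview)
Your proof is correct and follows essentially the same approach as the paper's. The paper phrases the first part in terms of averages $n$ and $n'$ of $|\cP[\bp]|$ over $\cP_\rho$ and over $\cP_\rho[\bp']$ respectively, and uses uniformity to compare them, while you use the equivalent pointwise lower bound $|\cP[\bp]|\gtrsim C(\epsilon)^{-1}|\cP|/|\cP_\rho|$; both arguments are the same computation, and your treatment of the cardinality bound (which indeed does not need uniformity) matches the paper's.
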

	\begin{proof}
		Let $\rho\le \rho'\le1$ and let $\bp'\in\cP_{\rho'}$. Let $n'$ be the average value of $|\cP[\bp]|$ for $\bp\in \cP_\rho[\bp']$. Then $n'|\cP_\rho[\bp']|\le C_1|\cP|(\rho')^s$. Let $n$ be the average value of $|\cP[\bp]|$ for $\bp\in \cP_\rho$. Then $n|\cP_\rho|= |\cP|$. Uniformity implies that $n\le C(\epsilon)n'$. The combination of these shows that $|\cP_\rho[\bp']|\le C(\epsilon)C_1|\cP_\rho|(\rho')^s$.
		
		The inequality $|\cP_\rho|\gtrsim C_1^{-1}\rho^{-s}$ follows since $n\le C_1|\cP|\rho^s.$
		
	\end{proof}

	We also recall Lemma 2.15 in \cite{OS2}.
	\begin{lemma}
		\label{largeunifset}
		Let $\epsilon>0$ and $\delta<\delta(\epsilon)$. Then each  set  $\cP\subset \cD_\delta$  has an $\epsilon$-uniform subset containing at least a $\sim \delta^\epsilon$-fraction of the original set.
	\end{lemma}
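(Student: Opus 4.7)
My plan is a bottom-up pigeonhole refinement through the dyadic scales $\rho_j=2^{-jT}$ of Definition \ref{defunif}, where $\delta=\rho_m$ and $T=T(\delta,\epsilon)$. I will build nested subsets $\cP=\cP^{(0)}\supset \cP^{(1)}\supset\cdots\supset \cP^{(m-1)}$ arranged so that $\cP^{(i)}$ is already uniform at the fine scales $\rho_m,\rho_{m-1},\ldots,\rho_{m-i}$. The final set $\cP^{(m-1)}$ will then be $\epsilon$-uniform: uniformity at $\rho_m=\delta$ is vacuous, and uniformity at $\rho_0=1$ is automatic, since $\cP\subset[0,1]^2$ intersects only one unit square.

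To pass from $\cP^{(i)}$ to $\cP^{(i+1)}$ I look at $N(\bq):=|\cP^{(i)}_{\rho_{m-i}}[\bq]|$ as $\bq$ ranges over $\cP^{(i)}_{\rho_{m-i-1}}$. Each $\rho_{m-i-1}$-square contains at most $4^T$ many $\rho_{m-i}$-squares, so $N(\bq)\in\{1,\ldots,4^T\}$ sorts into $O(T)$ dyadic buckets $[2^k,2^{k+1})$. I select the bucket capturing the largest share of the total mass $\sum_\bq N(\bq)$, keep only those $\bq$ falling in that bucket, and for each surviving $\bq$ retain exactly $2^k$ of its $\rho_{m-i}$-children (together with all their $\delta$-descendants from $\cP^{(i)}$). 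The crucial observation is that the operation removes $\rho_{m-i}$-squares in their entirety, so the count of $\delta$-descendants inside any surviving square at a strictly finer scale is unaffected. Consequently uniformity at the already processed scales is preserved, and at the new scale $\rho_{m-i-1}$ each surviving $\bq$ ends up with $2^k\cdot D_i$ many $\delta$-squares, where $D_i$ is the common count of $\delta$-squares per $\rho_{m-i}$-square (established at the previous step).

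Each refinement step retains at least a $\sim T^{-1}$ fraction of the surviving $\delta$-squares (a factor $1/O(T)$ from the dyadic pigeonhole, plus a factor $1/2$ from restricting $[2^k,2^{k+1})$ down to $2^k$), so after at most $m$ steps
\[
|\cP^{(m-1)}|\gtrsim (CT)^{-m}\,|\cP|.
\]
Since $\delta=2^{-mT}$, the bound $(CT)^{-m}\ge\delta^{\epsilon}$ is equivalent to $\log(CT)/T\le\epsilon$, which is exactly what the defining relation $T_\epsilon^{-1}\log(2T_\epsilon)=\epsilon$ together with $T\ge T_\epsilon$ guarantees, because the function $x\mapsto x^{-1}\log(2x)$ is decreasing for large $x$.

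The only real point of care is the direction of the induction. A top-down scheme would destroy previous uniformity, because discarding a single $\rho_j$-square also strips $\delta$-squares out of the already equalized $\rho_{j'}$-square with $j'<j$ that encloses it. Processing scales from fine to coarse sidesteps this issue, since throwing away an entire $\rho_j$-subtree leaves the internal bookkeeping of every surviving $\rho_{j'}$-square with $j'>j$ completely intact.
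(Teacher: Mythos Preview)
Your argument is correct and is essentially the standard proof of this fact. The paper itself does not prove Lemma~\ref{largeunifset}; it merely cites \cite[Lemma~2.15]{OS2}, so there is nothing to compare against beyond noting that your bottom-up dyadic pigeonholing is precisely the argument one finds in that reference (and in related papers).

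One small remark: your final inequality $\log(CT)/T\le\epsilon$ matches the defining relation $T_\epsilon^{-1}\log(2T_\epsilon)=\epsilon$ only when $C=2$, whereas your pigeonholing over $O(T)$ buckets together with the factor-of-two trimming gives some absolute $C>2$. This produces an exponent $\epsilon+\log(C/2)/T_\epsilon=\epsilon(1+o(1))$ rather than exactly $\epsilon$. Since $T_\epsilon\to\infty$ as $\epsilon\to 0$ and the lemma is only ever invoked with $\epsilon$ taken arbitrarily small, this discrepancy is immaterial; it is absorbed into the ``$\sim$'' in the statement or, equivalently, into a harmless reparametrisation of $\epsilon$. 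Your closing comment about top-down schemes is also fair for the naive version (discarding whole $\rho_j$-squares at step $j$ can empty a $\rho_{j-1}$-parent and break the child count there), though variants of top-down that only trim children without discarding parents do work; your bottom-up route simply avoids the bookkeeping.
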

	
	\begin{lemma}
		\label{fromgentoexact}	
		Each $\epsilon$-uniform $(\delta,s,K)$-Katz-Tao set $S\subset \cD_\delta$ can be partitioned into $K\delta^{o_\epsilon(1)}$ many sets $S_i$ which are $(\delta,s,O_\epsilon(1))$-Katz-Tao sets. The quantities $o_\epsilon(1),O_\epsilon(1)$ are independent of $K$.
	\end{lemma}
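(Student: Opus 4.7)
The plan is to exploit the branching tree structure of the $\epsilon$-uniform set $S$ at the dyadic scales $\rho_j:=2^{-jT}$, $j=0,1,\ldots,m$, coming from Definition~\ref{defunif} (so $\delta=2^{-mT}$). Uniformity provides integers $N_j=|S[\bp]|$ common to all $\bp\in S_{\rho_j}$, and branching numbers $M_j:=N_{j-1}/N_j$ counting the $\rho_j$-children of $\bp$ that meet $S$. Applying the $(\delta,s,K)$-Katz-Tao hypothesis at $r=\rho_{j-1}$ gives $N_{j-1}\lesssim K\cdot 2^{(m-j+1)Ts}$, so the normalized quantities $L_j:=N_{j-1}/2^{(m-j+1)Ts}$ all satisfy $L_j\lesssim K$.

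Fix a large constant $\hat C_\epsilon=O_\epsilon(1)$. I would select integers $f_j\in\{1,\ldots,M_j\}$, greedily from $j=m$ down to $j=1$, so that $G(j):=\prod_{j'\ge j}f_{j'}$ satisfies the deliberately over-shot constraint $G(j)\ge 2^m L_j/\hat C_\epsilon$ at every scale. Then, at each scale $j$ and inside every $\rho_{j-1}$-parent $\bp\in S_{\rho_{j-1}}$, I split the $M_j$ many $\rho_j$-children of $\bp$ in $S_{\rho_j}$ into $f_j$ groups of nearly equal size $\le\lceil M_j/f_j\rceil$ by a round-robin rule. This assigns each $v\in S$ a color tuple $c(v)=(c_1,\ldots,c_m)$, and the sets $S_c:=\{v\in S:c(v)=c\}$ partition $S$ into $G(1)$ pieces.

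Two bookkeeping estimates complete the argument. For every $\bp\in S_{\rho_{j-1}}$, the choice of $G$ yields
\[
|S_c\cap\bp|\;\le\;\prod_{j'\ge j}\Bigl\lceil\frac{M_{j'}}{f_{j'}}\Bigr\rceil\;\le\;2^{m-j+1}\,\frac{N_{j-1}}{G(j)}\;\lesssim\;\hat C_\epsilon\cdot 2^{(m-j+1)Ts}\;\sim\;\hat C_\epsilon(\rho_{j-1}/\delta)^s,
\]
and interpolating to arbitrary $r\in[\delta,1]$ costs only the factor $2^{Ts}=O_\epsilon(1)$, so each $S_c$ is $(\delta,s,O_\epsilon(1))$-Katz-Tao. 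On the other hand, the real-valued optimum of $G(1)$ is $\max_j 2^m L_j/\hat C_\epsilon\lesssim 2^m K/\hat C_\epsilon$, and integer rounding of $f_j$ inflates this by at most a factor of $2$ per scale, giving $G(1)\lesssim 2^{2m}K/\hat C_\epsilon\le K\delta^{-2/T_\epsilon}=K\delta^{-o_\epsilon(1)}$, where the last step uses $T\ge T_\epsilon$ and $T_\epsilon\to\infty$ as $\epsilon\to 0$.

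The main subtlety is the accounting of rounding. Ceiling the group sizes contributes a factor of $2$ per scale to the Katz-Tao constant, while rounding $f_j$ to an integer contributes a factor of $2$ per scale to the partition count; both accumulate to $2^m$. Over-shooting the LP target by $2^m$ funnels one copy of $2^m$ into the Katz-Tao constant (where it is absorbed into $\hat C_\epsilon=O_\epsilon(1)$) and leaves the other in the partition count (where $2^{2m}=\delta^{-o_\epsilon(1)}$ is the best one can hope for). The feasibility $f_j\le M_j$ is routinely preserved throughout the greedy step, using the identity $L_j/L_{j+1}=M_j/2^{Ts}$ and $G(j+1)\ge 2^m L_{j+1}/\hat C_\epsilon$, which forces $f_j\le\lceil M_j/2^{Ts}\rceil\le M_j$.
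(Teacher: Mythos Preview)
Your tree-splitting idea is essentially the same mechanism as the paper's proof, and your bookkeeping for the Katz--Tao constant of each $S_c$ and for the bound $G(1)\lesssim 2^{2m}K=K\delta^{-o_\epsilon(1)}$ is correct. There is, however, a genuine gap in the feasibility claim ``$f_j\le M_j$ is routinely preserved''.

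Your argument for $f_j\le M_j$ is inductive: it uses $G(j+1)\ge 2^m L_{j+1}/\hat C_\epsilon$ to get $f_j\le\lceil M_j/2^{Ts}\rceil$. But the base case $j=m$ requires $G(m+1)=1\ge 2^m L_{m+1}/\hat C_\epsilon=2^m/\hat C_\epsilon$, i.e.\ $\hat C_\epsilon\ge 2^m=\delta^{-1/T}$, which is not $O_\epsilon(1)$. Concretely, take $S$ to be a single $\delta$-square: then $M_j=1$ for all $j$ and $L_m=2^{-Ts}$, so the constraint $f_m\ge 2^m L_m/\hat C_\epsilon=2^{m-Ts}/\hat C_\epsilon$ forces $f_m>1=M_m$ whenever $\hat C_\epsilon<2^{m-Ts}$, which happens for all small $\delta$. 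The same obstruction appears whenever the branching at the bottom few scales is sparse. A natural repair is to over-shoot by the scale-dependent factor $2^{m-j+1}$ rather than the uniform $2^m$ (so the base case reads $G(m+1)=1\ge 2^0/\hat C_\epsilon$, trivially true); this keeps both your displayed estimates intact, but you then need $Ts\ge 1$ to close the feasibility induction cleanly, so some additional care at the few scales with $M_j=1$ is still required.

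The paper's proof sidesteps this bookkeeping altogether: instead of splitting at every level with a prescribed $f_j$, it only splits at the ``bad'' scales $\rho_1<\rho_2<\cdots<\rho_L$ where the count first exceeds the threshold $(\rho_i/\rho_{i-1})^s$, and the splitting factor at each bad scale is exactly the excess. The total number of pieces is then read off in one line from the Katz--Tao hypothesis applied at the top bad scale $\rho_L$, giving $I\le 2^m K$ directly, with no greedy/LP optimisation and no feasibility issue. Your approach gives the same exponent $\delta^{-o_\epsilon(1)}$ (with $2^{2m}$ in place of $2^m$), so the difference is purely one of execution, but the paper's version is cleaner precisely because the splitting factors are determined by the data rather than by a constraint that must be checked for consistency.
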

	\begin{proof}
		Recall that $\delta=2^{-Tm}$, with $T$ depending only on $\epsilon$.

		Start testing from the bottom of the tree of dyadic scales. Let $\rho_1$ be the smallest scale of the form $2^{-Tl}$ such that the number $M_1$ of squares in $S$ inside a nonempty $\rho_1$-square $Q_1$ (this number is the same for all such $Q_1$) satisfies $M_1>  (\rho_1/\delta)^{s}$. If no such scale exists, the desired conclusion is immediate.  Create $\le 2M_1/(\rho_1/\delta)^{s}$ collections, each with $\le (\rho_1/\delta)^{s}$ squares inside each nonempty $Q_1$. 
		
		We repeat the process. Let $\rho_2>\rho_1$ be the smallest scale (again, assuming it exists) of the form $2^{-Tl}$ such that the number $M_2$ of squares $Q_1\in S_{\rho_1}[Q_2]$ inside a nonempty $\rho_2$-square $Q_2$ satisfies $M_2>  (\rho_2/\rho_1)^{s}$. Create $\le 2M_2/(\rho_2/\rho_1)^{s}$ collections of squares $Q_1$, each with $\le (\rho_2/\rho_1)^{s}$ such squares inside each nonempty $Q_2$.
		
		The selection process stops when there are no eligible scales left. The algorithm produces  scales $\rho_1,\ldots,\rho_L$, numbers $M_1,\ldots,M_L$ and sets $S_1,\ldots, S_I$. Each set $S_i$ satisfies for each dyadic $r$-square $Q$ with  $r\ge \delta$
		$$|S_i\cap Q|\le 2^{2T} (r/\delta)^s.$$
		Note that $2^{2T}$ is $O_\epsilon(1)$.

		We next prove that  $I\le K\delta^{-\frac1T}$. The $i^{th}$ iteration of the selection process increases the number of sets by a multiplicative factor of at most $2M_i/(\rho_i/\rho_{i-1})^s$, with the convention that $\rho_0=\delta$. Thus, we have
		$$I\le 2^{m}\frac{M_1\cdot\ldots\cdot M_L}{(\rho_L/\delta)^s}.$$
		Note that $M_1\cdot\ldots\cdot M_L$ is the number of $\delta$-squares inside a nonempty $\rho_L$-square. By hypothesis, this is $\le K(\rho_L/\delta)^s$. Recall that $2^m=\delta^{-\frac1T}$ is $\delta^{o_\epsilon(1)}$ in our notation.

	\end{proof}
	
	\begin{lemma}
		\label{useful}	
		Let $\epsilon>0$. Each $(\delta,s,K)$-Katz-Tao set $S$ can be partitioned into $O(\delta^{-\epsilon}\log(1/\delta))$ many $\epsilon$-uniform $(\delta,s,K)$-Katz-Tao sets.
	\end{lemma}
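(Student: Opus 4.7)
The plan is to obtain the partition by a greedy extraction argument, where at each step I invoke Lemma \ref{largeunifset} to pull off one $\epsilon$-uniform piece and then iterate on what remains. The decisive observation is that the Katz-Tao property is monotone under taking subsets: if $S$ satisfies $|S\cap B(x,r)|_\delta\le K(r/\delta)^s$ for all balls, then so does any $S'\subset S$, with the same constant $K$. Therefore the extracted pieces and the residual sets at every stage of the iteration remain $(\delta,s,K)$-Katz-Tao, and the hypothesis of Lemma \ref{largeunifset} continues to hold throughout.

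The iteration runs as follows. Set $S^{(0)}=S$. Given $S^{(j)}$, apply Lemma \ref{largeunifset} to obtain an $\epsilon$-uniform subset $S_{j+1}\subset S^{(j)}$ with $|S_{j+1}|\ge c\,\delta^{\epsilon}|S^{(j)}|$ for some absolute constant $c>0$. Then set $S^{(j+1)}=S^{(j)}\setminus S_{j+1}$ and repeat. By the monotonicity observation, each $S_{j+1}$ is a $(\delta,s,K)$-Katz-Tao set, and they are disjoint by construction. The process terminates once $S^{(N)}=\emptyset$, and by disjointness the collection $\{S_1,\dots,S_N\}$ is the desired partition.

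It remains to bound $N$. After $j$ iterations we have
\[
|S^{(j)}|\le (1-c\delta^{\epsilon})^{j}|S|.
\]
Since $|S|\lesssim \delta^{-2}$ (as $S\subset\cD_\delta$ in the plane, say), the process terminates once $(1-c\delta^{\epsilon})^{j}|S|<1$, which happens as soon as
\[
j\ge \frac{\log|S|}{-\log(1-c\delta^{\epsilon})}\lesssim \frac{\log(1/\delta)}{c\delta^{\epsilon}}=O\!\bigl(\delta^{-\epsilon}\log(1/\delta)\bigr),
\]
using $-\log(1-x)\ge x$ for small $x$. This gives exactly the claimed bound on the number of pieces.

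There is no genuine obstacle here; the only thing to be careful about is to verify that the Katz-Tao constant $K$ is not degraded at any step (it is not, since we only pass to subsets) and that the quantitative fraction $\delta^{\epsilon}$ delivered by Lemma \ref{largeunifset} combines with the trivial bound $|S|\le \delta^{-O(1)}$ to produce the logarithmic factor in $N$. One could alternatively apply Lemma \ref{largeunifset} inside a stopping-time/greedy framework that organizes the residual sets by $\epsilon$-uniformity at each of the $O(\log(1/\delta))$ dyadic scales, but the direct iteration above is the shortest route.
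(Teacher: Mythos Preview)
Your proof is correct and follows essentially the same approach as the paper: both extract $\epsilon$-uniform pieces greedily via Lemma \ref{largeunifset}, relying on the monotonicity of the Katz--Tao condition under passing to subsets. The only cosmetic difference is that the paper organizes the count into halving phases (each of $O(\delta^{-\epsilon})$ steps) and disposes of the final leftover of size $O(\delta^{-\epsilon})$ as singletons, whereas you iterate straight to the empty set and bound the total number of steps directly; your bookkeeping is in fact slightly cleaner.
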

	\begin{proof}
		Each subset of a $(\delta,s,K)$-Katz-Tao set inherits this property. Consider an $\epsilon$-uniform subset $S_1$ of $S$, with size $\gtrsim  \delta^{\epsilon}|S|$, cf. Lemma \ref{largeunifset}. Then consider an $\epsilon$-uniform subset $S_2$ of $S\setminus S_1$, with size $\gtrsim \delta^{\epsilon}|S\setminus S_1|$. Repeat the process. Note that it takes at most  $n=O(\delta^{-\epsilon})$ steps to achieve half size $|S\setminus (S_1\cup\ldots\cup S_n)|<|S|/2$. We repeat this until the leftover has size $O(\delta^{-\epsilon})$. Each of the remaining  $s\in S$ is a one-element uniform set.
		
	\end{proof}

	We recall the following multi-scale decomposition, whose formulation has evolved in recent years, see e.g.  \cite{Sh},  \cite{SW} and \cite{OS2}.
	\begin{lemma}
		\label{Lipdec}	
		For every $\epsilon>0$ there is $\tau=\tau(\epsilon)>0$ such that the following holds: for each  non-decreasing $C$-Lipschitz  function $f:[0,1]\to [0,\infty)$ there is a partition
		$$0=a_1<a_2<\ldots<a_{J+1}=1$$
		and a sequence $$ \sigma_1<\sigma_2<\ldots<\sigma_J\le C$$ such that
		
		(i) for each $j$ we have $\tau\le a_{j+1}-a_j$	
		
		(ii) for each $1\le j_1<j_0\le J$ we have $$\sum_{j=j_1}^{j_0}\sigma_j(a_{j+1}-a_{j})\ge f(a_{j_0+1})-f(a_{j_1})-\epsilon$$
		
		(iii) for each $j$ we have
		$$f(x)\ge f(a_j)+\sigma_j(x-a_j),\;a_j\le x\le a_{j+1}.$$
	\end{lemma}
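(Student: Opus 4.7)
The plan is to build a convex piecewise linear minorant $\hat f \le f$ that equals $f$ at each breakpoint $a_j$; the slopes of $\hat f$ on $[a_j,a_{j+1}]$ will be the $\sigma_j$. Conditions (iii) and a strong form of (ii) then follow at once: (iii) is immediate from $\hat f \le f$, and since each per-step defect $f(a_{j+1})-f(a_j)-\sigma_j(a_{j+1}-a_j)$ is non-negative by (iii), a total-sum defect $\le \epsilon$ automatically dominates every partial-sum defect. So it suffices to produce $\hat f$ with at most $J=J(\epsilon)$ pieces, breakpoints at distance $\ge \tau$, strictly increasing slopes in $[0,C]$, and $\hat f(1)-\hat f(0) \ge f(1)-f(0)-\epsilon$.

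The natural starting point is the convex minorant $g(x)=\sup\{L(x):L \text{ affine}, L\le f\}$. Using that $f\ge 0$ is non-decreasing and $C$-Lipschitz, one checks that $g$ is convex, $g(0)=f(0)$, $g(1)=f(1)$, with right-derivative $g'$ non-decreasing and $0\le g' \le C$, and $g$ is affine between consecutive elements of the contact set $E=\{g=f\}$, which is closed and contains $\{0,1\}$. If the breakpoints $a_j$ are chosen in $E$ and $\sigma_j := g'(a_j^+)$ is taken as the smallest value of $g'$ on $[a_j,a_{j+1}]$, then on that piece $f(x) \ge g(x) \ge g(a_j)+\sigma_j(x-a_j) = f(a_j)+\sigma_j(x-a_j)$, giving (iii). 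By monotonicity of $g'$ the $\sigma_j$ are non-decreasing, and collapsing consecutive pieces with equal slopes makes them strict.

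To enforce (i) while keeping the total defect in (ii) small, I would use a slope-bin decomposition. Partition $[0,C]$ into $N=\lceil 4C/\epsilon\rceil$ intervals $I_k$ of length $\alpha=\epsilon/4$; by monotonicity each level set $B_k=(g')^{-1}(I_k)$ is a subinterval of $[0,1]$, so selecting a contact point near each endpoint of each nonempty $B_k$ produces at most $2N$ pieces with per-piece slope variation $<\alpha$. Using the minimum slope per piece incurs a defect of at most $\alpha \cdot 1 = \epsilon/4$ in the total sum. Any remaining piece of length $<\tau := \epsilon/(16NC)$ is then merged into a neighbor with its slope replaced by the smaller of the two (so that (iii) survives); with at most $2N$ merges, this contributes a further defect $\le 2NC\tau \le \epsilon/8$. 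A final pass collapses equal slopes to recover strict monotonicity.

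The main obstacle I expect is the bookkeeping around the coarsening and merging: one must simultaneously keep the $a_j$'s on the contact set $E$ (so that $\hat f(a_j)=f(a_j)$ is preserved), take the \emph{minimum} of slopes whenever pieces are merged (so that (iii) survives), and account for the cumulative loss in (ii) from both the $\alpha$-discretization and the short-piece merges. All losses are linear in the number of bins $N\sim C/\epsilon$, so a polynomial $\tau \sim \epsilon^2/C^2$ works; that the displayed statement writes $\tau=\tau(\epsilon)$ without explicit $C$-dependence is consistent with the usual convention that the Lipschitz constant is fixed in applications.
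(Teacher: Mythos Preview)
Your approach via the convex minorant $g$ of $f$, followed by a slope-bin discretization and merging of short pieces, is correct and is precisely the route taken in the references the paper cites (Lemmas~5.20--5.21 of \cite{SW} and Lemma~2.10 of \cite{OS2}); the paper itself gives no self-contained argument beyond those citations.  You and the paper also make the same key observation: once (iii) holds, each per-step defect $f(a_{j+1})-f(a_j)-\sigma_j(a_{j+1}-a_j)$ is non-negative, so the full-sum bound (ii) with $j_1=1,\,j_0=J$ immediately implies (ii) for every sub-range.

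One remark on the merging, which you already flag as the delicate point.  Your per-merge bound of $C\tau$ is valid provided the short piece is merged into its \emph{left} neighbor: writing $P_1=[a,b]$ with slope $s_1$ and $P_2=[b,c]$ with $s_2>s_1$, merging to slope $s_1$ increases the defect by exactly $(s_2-s_1)(c-b)$, so one needs the higher-slope piece to be the short one.  Since the $\sigma_j$ are increasing, merging leftward always achieves this except possibly for the leftmost bin.  The clean way to finish is to process right-to-left, so that at each absorption the ``current'' (short) piece carries the larger slope; the resulting increments then telescope against $\sum_j(\sigma_j-\sigma_{j-1})\le C$, and the residual leftmost fragment (total length $<\tau$) can be handled by one more absorption at cost $O(C\tau)$ after observing that its contribution to the defect is at most $C$ times its length.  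Either way your conclusion $\tau\sim\epsilon^2/C^2$ stands.
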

	\begin{proof}
		Items (i) and (iii) as well as (ii) for $j_1=1$, $j_0=J$ follow by combining Lemma 5.20 and 5.21 in \cite{SW}. See also Lemma 2.10 in \cite{OS2}. The fact that (ii) holds for all $j_0,j_1$ follows from this and  summation of (iii) (with $x=a_{j+1}$) over $j$.

	\end{proof}
	
	In previous literature Lemma \ref{Lipdec} was typically used to classify the intervals  $[a_j,a_{j+1}]$ as either ``good" or ``bad".
	We will prove a more robust version of the previous lemma, which guarantees all intervals are good. The key difference is that between (ii) in Lemma \ref{Lipdec} and the corresponding \eqref{eq: good2''} in Lemma \ref{lem: goodinterval2}, the latter being what defines an interval as being good. This difference is very subtle, and can only be fully appreciated in the argument from Section \ref{sec7}. More precisely, the $(\rho_0/\delta)^{o_\epsilon(1)}$-loss in \eqref{jewie9r93rr8948t958t9} is finely tuned into the induction on scales. On the other hand, (ii) in Lemma \ref{Lipdec} would only produce the very harmful $(1/\delta)^{o_\epsilon(1)}$-loss. We mention that this refined version proved in the next lemma is only needed once in our paper, in the afore-mentioned argument from Section \ref{sec7}. That proof requires the first interval to be good. For the other two applications in Sections \ref{sec5} and \ref{sec:new},
	Lemma \ref{Lipdec} would suffice.
	\begin{lemma}\label{lem: goodinterval2}
		For each small enough $\epsilon>0$
		we let 	
		\begin{equation}
			\label{fpoifruig9i096iy9i56'}
			\epsilon_0=\epsilon^{2\epsilon^{-1}}.
		\end{equation}	Then for each  non-decreasing $1$-Lipschitz  function $f:[0,1]\to [0,1]$  there exists a partition
		$$0=A_1<A_2<\ldots<A_{L+1}=1$$
		and a sequence
		$$0\le t_1<t_2<\ldots<t_L\le 1$$
		such that
		and  for each $1\le l\le L$
		\begin{equation}
			\label{sdjffudniugurtgurt8'}
			A_{l+1}-A_l\ge \epsilon_0\epsilon^{-1},
		\end{equation}
		
		\begin{equation}\label{eq: good'}
			f(x)\geq f(A_l)+t_l (x-A_l) -\epsilon( A_{l+1}-A_l),\; A_l\leq x\leq A_{l+1},
		\end{equation}
		\begin{equation}\label{eq: good2''}
			f(A_{l+1})\leq f(A_l)+ (t_l +3\epsilon) (A_{l+1}-A_l),
		\end{equation}

		\begin{equation}
			\label{sdjffudniugurtgurt8jdfuygtugit'}
			t_1\le f(1)-f(0)+\epsilon.
		\end{equation}

	\end{lemma}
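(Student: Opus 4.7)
The plan is to iteratively apply Lemma \ref{Lipdec}. First, I would apply it to $f$ with parameter $\epsilon$ to obtain a partition $\{a_j\}$ and strictly increasing slopes $\{\sigma_j\}$ satisfying (i)--(iii). A sub-interval $[a_j, a_{j+1}]$ is called \emph{good} (with assigned slope $t_l := \sigma_j$) if $f(a_{j+1}) - f(a_j) \le (\sigma_j + 3\epsilon)(a_{j+1} - a_j)$, i.e., \eqref{eq: good2''} holds; otherwise it is \emph{bad}. Good sub-intervals are retained as final leaves. For each bad sub-interval $[a,b]$, I would rescale $f|_{[a,b]}$ to $g\colon [0,1]\to[0,1]$ via $g(y) = (f(a + (b-a)y) - f(a))/(b-a)$ and apply Lemma \ref{Lipdec} again with parameter $\epsilon$, recursing until no bad intervals remain. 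The tree of recursions produces the final partition as its set of leaves, ordered left to right.

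Bounding the depth of this recursion is the crux of the argument. Two observations combine. First, property (iii) of Lemma \ref{Lipdec} at the parent level gives $f(x) \ge f(a) + \sigma_j(x - a)$ on $[a,b]$, so the rescaled $g$ satisfies $g(y) \ge \sigma_j y$ on $[0,1]$; using the maximality built into the standard construction of Lemma \ref{Lipdec}, this forces every sub-partition slope to be $\ge \sigma_j$, so slopes along any root-to-leaf path are non-decreasing. Second, by property (ii) applied to the bad interval, the weighted-average sub-slope exceeds $\sigma_j + 2\epsilon$, hence at least one strictly exceeds $\sigma_j$. The only sub-interval whose slope could match the parent is the leftmost, and a bad leftmost sub-interval has rescaled length $< 1/3$ (otherwise (ii) with $j_1 = j_0 = 1$ already forces goodness). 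This geometric shrinkage along any chain of bad intervals with constant attached slope, combined with the $[0,1]$-range of slopes, yields at most $\lceil \epsilon^{-1}\rceil$ recursion levels along any branch.

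For the length lower bound \eqref{sdjffudniugurtgurt8'}, property (i) of Lemma \ref{Lipdec} guarantees that each recursion level shrinks lengths (in rescaled coordinates) by a factor of at least $\tau(\epsilon)$. Taking $\epsilon$ small enough that $\tau(\epsilon) \ge \epsilon^2$, as is consistent with the constructions in \cite{Sh,SW,OS2}, after at most $\epsilon^{-1}$ recursion levels each final interval has length $\ge \tau(\epsilon)^{\epsilon^{-1}} \ge \epsilon^{2\epsilon^{-1}} = \epsilon_0$. The explicit factor $\epsilon^{-1}$ appearing on the right of \eqref{sdjffudniugurtgurt8'} provides the slack needed to absorb the constant factors generated by the depth argument.

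The remaining conditions are then inherited: \eqref{eq: good'} (a weakening of (iii)) holds on each final leaf after telescoping the rescaled (iii) through the intermediate recursion levels, with the $-\epsilon(A_{l+1}-A_l)$ slack absorbing the cumulative error; \eqref{eq: good2''} is precisely the defining condition for a good leaf; and \eqref{sdjffudniugurtgurt8jdfuygtugit'} follows from $t_1 = \sigma_1^{(0)}$ together with property (ii) of Lemma \ref{Lipdec} applied at $j_1 = j_0 = 1$ at the top level, giving $t_1 \le f(A_2) - f(0) + \epsilon \le f(1) - f(0) + \epsilon$. The strict ordering $t_1 < t_2 < \ldots < t_L$ is inherited from the strict ordering in Lemma \ref{Lipdec} at each recursion step, propagated left-to-right along the leaves of the tree. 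The main obstacle is precisely the depth bound for chains of bad intervals whose leftmost slopes fail to increase strictly; this requires combining the slope non-decrease with the $1/3$-geometric shrinkage of leftmost bad sub-intervals, and is what makes the iteration terminate before the interval lengths degrade below $\epsilon_0\epsilon^{-1}$.
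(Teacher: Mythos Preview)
Your recursive approach has genuine gaps. The most serious is the depth bound. You claim at most $\lceil \epsilon^{-1}\rceil$ recursion levels by combining ``slopes are non-decreasing along root-to-leaf paths'' with ``leftmost bad sub-intervals shrink by a factor $\ge 3$.'' But this does not bound the depth: non-leftmost steps increase the slope only \emph{strictly}, not by any fixed amount (nothing forces $\sigma'_2-\sigma_j$ to be bounded below), so a path can take arbitrarily many non-leftmost steps before the slope reaches $1$. Leftmost and non-leftmost steps may alternate indefinitely, and no combination of ``geometric shrinkage on constant-slope chains'' with ``slopes live in $[0,1]$'' yields a finite bound. Without the depth bound, the length estimate \eqref{sdjffudniugurtgurt8'} collapses. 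A second gap is the strict ordering $t_1<\ldots<t_L$: even granting that all sub-slopes inside a bad interval $[a_j,a_{j+1}]$ are $\ge \sigma_j$, nothing keeps them below $\sigma_{j+1}$, so a leaf inside $[a_j,a_{j+1}]$ may receive slope exceeding that of the (good) interval $[a_{j+1},a_{j+2}]$ to its right. You also rely on properties of Lemma~\ref{Lipdec} not in its statement (a ``maximality'' of the construction, and $\tau(\epsilon)\ge\epsilon^2$).

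The paper sidesteps all of this by applying Lemma~\ref{Lipdec} \emph{once} with the tiny parameter $\epsilon_0$, then bucketing the resulting intervals by slope into $\epsilon^{-1}$ groups $I_k=\bigcup\{[a_j,a_{j+1}]:\sigma_j\in[\epsilon k,\epsilon(k+1))\}$. Since the $\sigma_j$ are increasing, each $I_k$ is itself an interval. One then greedily merges: take the longest $I_k$, absorb adjacent buckets of length $<\epsilon^2|I_k|$, assign slope $t=\epsilon k$, and repeat on the remaining large neighbors. This terminates in $\le\epsilon^{-1}$ steps (there are only $\epsilon^{-1}$ buckets), each step costs at most a factor $\epsilon^2$ in length (hence \eqref{sdjffudniugurtgurt8'}), and the strict ordering of the $t_l$ is automatic because distinct final intervals come from distinct bucket indices~$k$.
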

	\begin{proof}
		Apply  Lemma \ref{Lipdec} to $f$ with $\epsilon$ replaced by $\epsilon_0$. The lemma produces the numbers $0\le a_j\le 1$ and $0\le \sigma_j\le 1$. Only (ii) and (iii) in the lemma  will be used.
		
		Write $s_k=\epsilon k$, $0\le k\le \epsilon^{-1}-1$.  Let $I_{k}$ be the union of the intervals $[a_j,a_{j+1}]$ with $\sigma_j\in [s_k, s_{k+1})$  (if $k=\epsilon^{-1}-1$, we include the right endpoint, allowing for $\sigma_j\in [s_k, s_{k+1}]$). Since $\sigma_j<\sigma_{j+1}$ for all $1\leq j\leq J-1$, $I_{k}$ is a union of consecutive intervals, and so it is either a nonempty interval, or the empty set. Also,  $\{I_{k}\}_{k=0}^{\epsilon^{-1}-1}$ form a partition of $[0,1]$, and $I_{{k+1}}$ follows right after $I_{k}$.

		Let $k^{(1)}$ be the index that corresponds to  the largest value  $|I_{k}|$. Write $I_{k^{(1)}}=[a,b]$.  If there are multiple choices, we choose an arbitrary one.  We have $|I_{k^{(1)}}|\geq \epsilon$. In particular, due to \eqref{fpoifruig9i096iy9i56'}	 we have
		\begin{equation}
			\label{kjehfcherufhurceiyui}
			|I_{k^{(1)}}|\ge \epsilon_0\epsilon^{-1}.
		\end{equation}

		Let $I_{k_l}=[a_{ll}, a_{lr}]$ be an interval on the left hand side of $I_{k^{(1)}}$ such that $|I_{k_l}|\ge  \epsilon^{2}|I_{k^{(1)}}|$ and  $|I_k|<\epsilon^{2}|I_{k^{(1)}}|$ for each $k_l<k<k^{(1)}$. Such an interval may not exist, in which case we let $a_{ll}=a_{lr}=a$.  But if it does, it is unique.

		Similarly, we let $I_{k_r}=[a_{rl}, a_{rr}]$ be an interval on the right hand side of $I_{k^{(1)}}$ such that $|I_{k_r}|\ge \epsilon^2 |I_{k^{(1)}}|$ and $|I_k|<\epsilon^{2}|I_{k^{(1)}}|$ for each $k^{(1)}<k<k_r$. If such an interval does not exist, we let $a_{rl}=a_{rr}=b$.
		
		We choose $[a_{lr},a_{rl}]$ to be the first interval  in our partition of $[0,1]$, and we call it $[A_{l_1},A_{l_1+1}]$. We assign it the value $t_{l_1}=s_{k^{(1)}}$. Note that \eqref{sdjffudniugurtgurt8'} is satisfied due to \eqref{kjehfcherufhurceiyui}.

		We claim that
		\begin{equation}
			\label{roi9ir9gi9gi09ig}
			f(x)\geq f(a_{lr}) + s_{k^{(1)}}(x-a_{lr}) -\epsilon(a_{rl}-a_{lr}),  \;\;a_{lr}\leq x \leq a_{rl}\end{equation}  and
		\begin{equation}
			\label{jicfhirficpreiofwi}
			f(a_{rl})\leq f(a_{lr})+ (s_{k^{(1)}} +3\epsilon) (a_{rl}-a_{lr}).
		\end{equation}
		
		We first prove \eqref{roi9ir9gi9gi09ig}.
		Recall that $[a_{lr},a]$ is the union of (at most $\epsilon^{-1}$) intervals $I_k$, each with length $<\epsilon^2|I_{k^{(1)}}|$. Thus
		\begin{equation}
			\label{dkforeifoirpipig}
			a-a_{lr}\leq \epsilon|I_{k^{(1)}}|\leq \epsilon (a_{rl}-a_{lr}).
		\end{equation}
		Similarly,
		\begin{equation}
			\label{dkforeifoirpipigdoi}
			a_{rl}-b\leq \epsilon (a_{rl}-a_{lr}).
		\end{equation}
		Recall that $s_{k^{(1)}}\leq 1$. Thus,    when $x\in (a_{lr}, a]$,
		by monotonicity and \eqref{dkforeifoirpipig}
		$$f(x) \geq f(a_{lr}) \geq f(a_{lr}) +s_{k^{(1)}}(x-a_{lr}) -\epsilon (a_{rl} -a_{lr}).$$
		When $x\in [a,a_{rl}]$, then due to Lemma \ref{Lipdec} (iii) and \eqref{dkforeifoirpipig}
		\begin{align*}
			f(x)&\geq f(a) +s_{k^{(1)}} (x-a) \\&\geq f(a_{lr}) + s_{k^{(1)}} (x-a)+s_{k^{(1)}}(a-a_{lr}) -\epsilon(a_{rl}-a_{lr})\\&=f(a_{lr}) + s_{k^{(1)}} (x-a_{lr}) -\epsilon(a_{rl}-a_{lr}).
		\end{align*}
		Next we verify \eqref{jicfhirficpreiofwi}. By first using the 1-Lipschitz property, then Lemma \ref{Lipdec}	(ii) we find
		\[
		f(a_{rl}) \leq f(b)+ ( a_{rl}-b)  \leq \epsilon_0 + f(a_{lr})+  (s_{k^{(1)}}+\epsilon)(b-a_{lr}) + (a_{rl}-b).
		\]
		Since (recall \eqref{dkforeifoirpipigdoi}) $a_{rl}-b\leq \epsilon(a_{rl}-a_{lr})$ and (recall \eqref{kjehfcherufhurceiyui}) $\epsilon_0 \leq \epsilon(b-a)\le \epsilon(a_{rl}-a_{lr})$, we have
		\[
		f(a_{rl}) \leq  f(a_{lr}) + (s_{k^{(1)}}+3\epsilon)(a_{rl}-a_{lr}).
		\]
		
		If $a_{ll}>0$, we iterate the same process to merge $I_{k^{(2)}}:=I_{k_l}$ with intervals $I_k$ on the left hand side of $I_{k_l}$ that satisfy $|I_k|<\epsilon^2|I_{k_l}|$. Similarly, if $a_{rr}<1$, we merge $I_{k^{(3)}}:=I_{k_r}$
		with intervals on the right hand side of $I_{k_r}$ that satisfy $|I_k|<\epsilon^2|I_{k_r}|$. This will produce the next two intervals $[A_{l_2},A_{l_2+1}],\;[A_{l_3},A_{l_3+1}]$ in our partition, and the corresponding values $t_{l_2},t_{l_3}$. Recall that we have control over the drop in size
		$$|I_{k^{(2)}}|,\;|I_{k^{(3)}}|\ge \epsilon^2|I_{k^{(1)}}|\ge \epsilon^3.$$
		
		We  iterate this process until we exhaust all intervals. We create the last interval of the partition by using $I_{k^{(L)}}$. Note that $|I_{k^{(L)}}|\ge |I_{k^{(1)}}|\epsilon^{2(L-1)}\ge \epsilon^{2L-1}$. Since there are only $\epsilon^{-1}$ intervals $I_k$ to start with, we have that $L\le \epsilon^{-1}$. When combined with \eqref{fpoifruig9i096iy9i56'} this gives
		$$|I_{k^{(L)}}|\ge \epsilon_0\epsilon^{-1}.$$
		As we have seen earlier, it is this inequality that allows for the verification of \eqref{jicfhirficpreiofwi} for the last selected  interval $[A_{l_{L}},A_{l_{L+1}}]$.	Our choice for small $\epsilon_0$ guarantees that this inequality is preserved throughout the whole selection process. The key is that we knew a priori the  upper bound $\epsilon^{-1}$ on the number of steps. Note also that \eqref{sdjffudniugurtgurt8'} is immediate and
		\eqref{eq: good'} follows as in the first step of the iteration.
		
		Finally, the fact that $t_l$ is increasing follows from the fact that $\sigma_j$ is increasing, while \eqref{sdjffudniugurtgurt8jdfuygtugit'} follows by summing \eqref{eq: good'}	  with $x=A_{l+1}$ and using the fact that $t_1$ is the smallest among all $t_l$.
		
	\end{proof}

	\begin{defn}
		We define the branching function $\beta:[0,m]\to[0,m]$ of an $\epsilon$-uniform set  $\cP$ of $\delta$-intervals with parameters $T,m$  to be
		$$\beta(j)=\frac{\log_2|\cP|_{2^{-jT}}}{T}, \;0\le j\le m$$
		for integers, and then we interpolate linearly. A similar definition holds for tubes and squares.
	\end{defn}
	This function is $1$-Lipschitz.

	The next theorem is one of the main novelties in our paper. It is a direct consequence of  Lemma \ref{lem: goodinterval2} applied to the branching function. Predecessors of this result have established themselves as chief tools in fractal analysis. They essentially produce a sequence of scales
	$\delta=\delta^{A_{L+1}}>\delta^{A_{L}}>\ldots>\delta^{A_{1}}=1$ such that, apart from negligible $\epsilon$-losses,  the dyadic tree of each uniform set $S$ consists of $O(1)$ many  layers, each of which is a (rescaled) $(\delta^{A_{l+1}-A_l},t_l,O(1))$-set with cardinality $\sim \delta^{(A_l-A_{l+1})t_l}$. This is almost as good as $S$ being a $(\delta,t,O(1))$-set with size $\sim \delta^{-t}$. Such a set would only have one layer, $L=1$, and  $t_1=t$.

	\begin{theorem}[Multi-scale decomposition with  good intervals]
		\label{justonegood}	
		
		For each small enough $\epsilon>0$
		we let
		\begin{equation}\label{riogitgoitophiophi}
			\epsilon_0=\epsilon^{2\epsilon^{-1}}.
		\end{equation}
		Then for each $\epsilon_0$-uniform set $S$ of $\delta$-intervals (or equivalently, $\delta$-tubes intersecting a fixed $\delta$-square)
		there exists a partition
		$$0=A_1<A_2<\ldots<A_{L+1}=1$$
		and a sequence
		$$0\le t_1<t_2<\ldots<t_L\le 1$$
		such that
		
		(i) $A_{l+1}-A_l\ge \psi(\epsilon):=\epsilon_0\epsilon^{-1}$,
		
		(ii) $\log_{1/\delta}(\frac{|S|_{\delta^{A_{l+1}}}}{|S|_{\delta^{A_{l}}}})\le (t_l+3\epsilon)(A_{l+1}-A_l)$,
		
		(iii) for each  $I\in S_{\delta^{A_l}}$ the $\times \delta^{-A_l}$ rescaled copy of $S_{\delta^{A_{l+1}}}[I]$
		is a $(\delta^{A_{l+1}-A_l},t_l,O(\delta^{(A_l-A_{l+1})\varphi(\epsilon)}))$-set with size in the interval $$[ O(\delta^{(A_l-A_{l+1})(t_l-\varphi(\epsilon))}), O(\delta^{(A_l-A_{l+1})(t_l+\varphi(\epsilon))})]$$
		
		(iv)  $t_1\le \log_{1/\delta}|S|+\epsilon$.

		The nonnegative function $\varphi(\epsilon)$ satisfies $\lim_{\epsilon\to0}\varphi(\epsilon)=0$. Moreover, $L$ is independent of $\delta$.
	\end{theorem}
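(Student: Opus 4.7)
The plan is to apply Lemma \ref{lem: goodinterval2} to a normalized version of the branching function of $S$. Let $T=T(\delta,\epsilon_0)$ and $m$ be the uniformity parameters from Definition \ref{defunif} at level $\epsilon_0$, so that $\delta=2^{-mT}$; the branching function $\beta:[0,m]\to[0,m]$ of $S$ is 1-Lipschitz, non-decreasing, with $\beta(0)=0$. Define $f:[0,1]\to[0,1]$ by $f(A):=\beta(mA)/m$. Then $f$ is 1-Lipschitz, non-decreasing, and agrees with $A\mapsto\log_{1/\delta}|S|_{\delta^A}$ at every grid point $A\in\{j/m:0\le j\le m\}$. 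I would apply Lemma \ref{lem: goodinterval2} to $f$ with parameter $\epsilon$, obtaining the partition $\{A_l\}$ and the increasing sequence $\{t_l\}$; the proof of that lemma shows $L\le\epsilon^{-1}$, which already gives the claimed $\delta$-independence of $L$.

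Claims (i) and (iv) are then immediate: (i) is \eqref{sdjffudniugurtgurt8'} with $\psi(\epsilon):=\epsilon_0\epsilon^{-1}$, and (iv) restates \eqref{sdjffudniugurtgurt8jdfuygtugit'} using $f(1)-f(0)=\log_{1/\delta}|S|$. Claim (ii) is the rewriting of \eqref{eq: good2''} via the identity $f(A_l)=\log_{1/\delta}|S|_{\delta^{A_l}}$. The substantive content is (iii). Fix $I\in S_{\delta^{A_l}}$; the $\epsilon_0$-uniformity of $S$, combined with Lemma \ref{goodallscales} to pass between adjacent grid scales, forces the count of $\delta^{A_{l+1}}$-subintervals of $S$ inside any non-empty $\delta^x$-subball of $I$ (for $A_l\le x\le A_{l+1}$) to equal $|S|_{\delta^{A_{l+1}}}/|S|_{\delta^x}$ up to multiplicative factors $\delta^{o_\epsilon(1)}$. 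The lower bound \eqref{eq: good'} at $x$ then yields
\[
\frac{|S|_{\delta^{A_l}}}{|S|_{\delta^x}}\lesssim \delta^{(x-A_l)t_l-\epsilon(A_{l+1}-A_l)},
\]
which, after rescaling $I$ to the unit interval, is exactly the $(\delta^{A_{l+1}-A_l},t_l,O(\delta^{(A_l-A_{l+1})\varphi(\epsilon)}))$-set condition at radius $\delta^{x-A_l}$ provided $\varphi(\epsilon)\ge\epsilon$. The matching upper and lower bounds on $|S_{\delta^{A_{l+1}}}[I]|$ come from \eqref{eq: good2''} and from \eqref{eq: good'} evaluated at $x=A_{l+1}$ respectively, and the choice $\varphi(\epsilon):=3\epsilon$ accommodates all three requirements.

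The only technicality—not a real obstacle—is that the $A_l$ delivered by Lemma \ref{lem: goodinterval2} are continuous parameters, while the scale statistics of $S$ are intrinsically discrete on the grid $\{j/m\}$. Rounding each $A_l$ to the nearest grid point changes $f(A_l)$ by at most $1/m\lesssim T_{\epsilon_0}/\log_2(1/\delta)$, which is negligible against the lower bound $A_{l+1}-A_l\ge \epsilon_0\epsilon^{-1}$ once $\delta$ is chosen small enough relative to $\epsilon$; the resulting multiplicative losses are absorbed by enlarging the implicit constants and, if needed, $\varphi(\epsilon)$ slightly. The analytical heart of the argument is packaged inside Lemma \ref{lem: goodinterval2}; here one is essentially translating its output through the dictionary between the branching function and the multi-scale structure of the $\epsilon_0$-uniform set $S$.
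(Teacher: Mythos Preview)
Your proposal is correct and follows exactly the route the paper indicates: the paper itself states that Theorem~\ref{justonegood} ``is a direct consequence of Lemma~\ref{lem: goodinterval2} applied to the branching function,'' and you have carefully unpacked this into the verification of (i)--(iv), including the passage from the continuous partition points to the discrete grid and the derivation of the $(\delta^{A_{l+1}-A_l},t_l,C)$-set condition from \eqref{eq: good'} via uniformity. The paper gives no further detail beyond that one sentence, so your write-up is in fact more complete than the original.
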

	While we may take $\varphi(\epsilon)=3\epsilon$, the exact value of $\varphi(\epsilon)$ will be irrelevant. The same applies to $\psi$, whose only requirement is to be $>0$. Calling these quantities $\varphi(\epsilon)$ and $\psi(\epsilon)$ will help us distinguish their role in our later arguments. See for example \eqref{inforr} and \eqref{inforrtrei}.

	There are similar versions of Lemma \ref{lem: goodinterval2} and of Theorem \ref{justonegood} for the branching functions of higher dimensional collections, such as squares and tubes. 	
	
	\section{Probabilistic arguments}
	\label{sec3}
	Katz-Tao sets may sometimes have too small of a size, that is not reflective of their dimension. In this section we use random rigid motions to correct this deficit.
	
	\begin{lemma}
		\label{KT1}	
		Let $S$ be an  $\epsilon$-uniform $(\delta,s,K)$-Katz-Tao set of intervals in $\cD_\delta$. Let $\upsilon>0$. Then, for each $\delta<\delta(\epsilon,\upsilon)$ there is $T\subset [0,1]\cap\delta\Z$ with size $N\sim \frac{\delta^{-s}K}{|S|_\delta}$ such that the set
		$$A:=\cup_{t\in T}(t+S)$$
		(a) has size $K\delta^{-s}\gtrsim |A|_\delta\gtrsim \delta^{\upsilon}K\delta^{-s}$
		\\
		\\
		(b)  is a $(\delta,s,O_\epsilon(K\delta^{-\upsilon}))$-Katz-Tao set
		\\
		\\
		(c) each $\delta$-interval is in at most $\delta^{-\upsilon}$ of the translates $t+S$, $t\in T$.
	\end{lemma}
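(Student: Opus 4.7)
The plan is to construct $T$ by sampling $N \sim K\delta^{-s}/|S|_\delta$ shifts uniformly at random (with replacement) from $[0,1]\cap\delta\Z$, and to verify properties (a)--(c) simultaneously with positive probability via standard Chernoff-type concentration. I expect (c) and (b) to each come from a concentration inequality combined with a union bound, while (a) will follow directly from the multiplicity bound in (c).

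For (c), fix a $\delta$-interval $I$ and consider the multiplicity $Z_I := \#\{t\in T:\; I\subset t+S\}$. For a uniformly random $t$, the event $I\subset t+S$ is the same as $t\in I-S$, so $Z_I\sim\mathrm{Bin}(N,p)$ with $p=|S|_\delta\delta$, and $Np=K\delta^{1-s}\le 1$ using the implicit bound $K=O(\delta^{s-1})$. The Chernoff tail $P[Z_I\ge k]\le (eNp/k)^k$ makes $Z_I\le C\log(1/\delta)$ a very-high-probability event, and a union bound over the $\delta^{-1}$ intervals $I$ upgrades this to the simultaneous bound $Z_I\le\delta^{-\upsilon}$ for every $I$.

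For (b), I use the pointwise domination
\[
|A\cap B(x,r)|_\delta \le Y(x,r) := \sum_{t\in T} |(t+S)\cap B(x,r)|_\delta,
\]
a sum of $N$ iid bounded variables $X_t$, each bounded by $M:=K(r/\delta)^s$ by the Katz--Tao property of $S$. Averaging the identity $\sum_{t'\in\delta\Z\cap[0,1]}|(t'+S)\cap B(x,r)|_\delta \sim (r/\delta)|S|_\delta$ over the $\delta^{-1}$ shifts $t'$ gives $E[X_t]\sim|S|_\delta r$, so $E[Y(x,r)]\sim K\delta^{-s}r$. Since the target $K\delta^{-\upsilon}(r/\delta)^s$ exceeds this mean by a factor $\delta^{-\upsilon}/r^{1-s}\ge\delta^{-\upsilon}$ for $r\le 1$, a multiplicative Chernoff bound produces
\[
P\bigl[\,Y(x,r)\ge K\delta^{-\upsilon}(r/\delta)^s\,\bigr] \le \exp(-c\delta^{-\upsilon}).
\]
A union bound over the $O(\delta^{-O(1)})$ pairs $(x,r)$ with $x\in\delta\Z$ and $r$ dyadic in $[\delta,1]$ still leaves a negligible failure probability.

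Finally, (a) is immediate: deterministically $|A|_\delta\le N|S|_\delta = K\delta^{-s}$, while conditionally on (c) each $\delta$-interval in $\cup_t(t+S)$ is counted with multiplicity $\le\delta^{-\upsilon}$, forcing $|A|_\delta \ge N|S|_\delta/\delta^{-\upsilon} = \delta^\upsilon K\delta^{-s}$. All three events therefore hold simultaneously with positive probability, so the required $T$ exists. The main mild technical point is calibrating the Chernoff losses in (b) and (c) consistently with the same $\delta^{-\upsilon}$ budget; this is painless because $E[Y(x,r)]$ already sits below the target by a factor $\ge \delta^{-\upsilon}$, and the expected multiplicity $E[Z_I]$ is at most $1$.
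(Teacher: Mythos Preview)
Your proof is correct and follows the same overall strategy as the paper: pick $N\sim K\delta^{-s}/|S|_\delta$ random shifts, verify (c) by a binomial tail bound plus union bound, deduce (a) from (c) via the $L^1$--$L^\infty$ inequality, and verify (b) by concentration plus a union bound over centers and dyadic radii.

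The one genuine difference is in the verification of (b). The paper splits into two regimes depending on the size of $|S|_\rho\rho^s$: when this quantity is large it gives a \emph{deterministic} bound $N|S|_\delta/|S|_\rho\lesssim K\delta^{-\upsilon}(\rho/\delta)^s$, which relies on the $\epsilon$-uniformity of $S$ (so that every nonempty $\rho$-cell carries $\sim|S|_\delta/|S|_\rho$ points); only in the sparse regime does it use a probabilistic estimate, and there it controls the \emph{number of translates hitting} $I_\rho$ rather than the sum you use. Your argument bypasses both the case split and the uniformity hypothesis by applying Chernoff directly to the bounded sum $Y(x,r)=\sum_t|(t+S)\cap B(x,r)|_\delta$, exploiting that each summand is at most $M=K(r/\delta)^s$ while the mean $\sim K\delta^{-s}r$ already sits a factor $\delta^{-\upsilon}r^{s-1}\ge\delta^{-\upsilon}$ below the target; the resulting tail $\lesssim (e\mu_Y/t)^{t/M}\le\exp(-c\,\delta^{-\upsilon}\log(1/\delta))$ survives the polynomial union bound. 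This is a cleaner route and shows that the $\epsilon$-uniformity assumption is not actually needed for (b); the paper's $O_\epsilon$ in the conclusion of (b) comes only from restricting to scales $2^{-jT}$ and interpolating, which your argument avoids. One small point: you sample with replacement, while the lemma asks for a set $T$; either switch to sampling without replacement (hypergeometric tails are no worse than binomial) or note that the expected number of collisions is $O(N^2\delta)\le O(N)$ and discard them, which costs only constants.
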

	\begin{proof}
		As a result of Stirling's formula, for each $1\le M\le N$
		$${N\choose M}\lesssim (\frac{Ne}{M})^{M+\frac12}.$$
		We select a random set $T$ of $N\sim \frac{\delta^{-s}K}{|S|_\delta}$ distinct translations. First, let us gauge the size of the resulting set $A$. Fix a $\delta$-interval $I_\delta =[n\delta,(n+1)\delta]$. The probability that a random translate $t+S$ will contain $I_\delta$ is at most $\delta|S|_\delta$. The probability that at least $\delta^{-\upsilon}$ translates will contain $I_{\delta}$ is at most
		$$\sum_{M\ge \delta^{-\upsilon}}{N\choose M}(\delta|S|_\delta)^{M}\lesssim N^{3/2}\max_{M\ge \delta^{-\upsilon}}(\frac{Ne\delta|S|_\delta}{M})^{M}.$$
		Since $K\lesssim \delta^{s-1}$ and  $N\le \delta^{-1}$, this number is $O(\delta^{-\frac32+\upsilon\delta^{-\upsilon}})$, and thus $O(\delta^{100})$, assuming $\delta$ is small enough.
		
		It follows that the probability that at least $\delta^{-\upsilon}$ of the $N$ chosen translates will contain some $I_\delta$ is $O(\delta^{99})$. Equivalently, with probability $1-O(\delta^{99})$, the set $T$ will satisfy
		\begin{equation}
			\label{djrufurgugui}
			\|\sum_{t\in T}1_{t+S}\|_\infty\le \delta^{-\upsilon}.\end{equation}
		Then (a) follows by combining this with  $$\delta N|S|_\delta=\|\sum_{t\in T}1_{t+S}\|_1\le \delta|A|_\delta\|\sum_{t\in T}1_{t+S}\|_\infty.$$
		Note also that (c) follows from \eqref{djrufurgugui}.
		For (b), it suffices to consider $I_\rho$ with length $\rho=2^{-jT}\ge \delta$. The set $S$ may be covered with $|S|_\rho$  $\rho$-intervals, each containing $|S|_\delta/|S|_\rho$ intervals $I_\delta$. We have two possibilities.
		\\
		\\
		1. If $|S|_\rho\rho^s\delta^{-\upsilon}\ge 1$, then any choice of $N$ translates will contribute at most
		$$\le N\frac{|S|_\delta}{|S|_\rho}\lesssim K\delta^{-\upsilon}(\frac{\rho}{\delta})^s$$
		$\delta$-intervals to $I_\rho$. Thus, the non-concentration condition is verified in this case, with probability 1.
		\\
		\\
		2. Assume  $|S|_\rho\rho^s\delta^{-\upsilon}\le 1$.
		The probability that $t+S$ intersects $I_\rho$ is at most $\rho|S|_\rho$.  If it does intersect $I_\rho$, it contributes at most $|S|_\delta/|S|_\rho$ $\delta$-intervals.
		We have
		$${\mathbb{P}}(I_\rho\text{ contains }\ge \delta^{-\upsilon}K(\rho/\delta)^s\text{ intervals from }A)\le$$$$ {\mathbb{P}}(I_\rho\cap(t+S)\not=\emptyset\text{ for }M\ge\delta^{-\upsilon}K(\rho/\delta)^s|S|_\rho/|S|_\delta\text{ translates }t).$$
		Our non-concentration hypothesis together with the main assumption in this case implies that
		$$N\ge\delta^{-\upsilon}K(\rho/\delta)^s|S|_\rho/|S|_\delta\ge \delta^{-\upsilon}.$$
		The same computation shows $M\ge \delta^{-\upsilon}$. The fact that $N$ is at least as large as the smallest admissible $N$ shows that the above probability is nonzero, so it demands an estimate.
		As before, this  probability is
		\begin{align*}
			&\lesssim N^{3/2}\max_{M\ge \delta^{-\upsilon}K(\rho/\delta)^s|S|_\rho/|S|_\delta}(\frac{N}{M}\rho e|S|_\rho)^{M}\\&\lesssim N^{3/2}\max_{M\ge \delta^{-\upsilon}}(\frac{N}{M}\rho e|S|_\rho)^{M} \\&\lesssim \delta^{-3/2}(e\rho^{1-s}\delta^\upsilon)^{\delta^{-\upsilon}}\le \delta
			^{100},
		\end{align*}
		when $\delta$ is small enough.
		
		We only need to test (b) for (at most $O(\delta^{-1})$) intervals $I_\rho$, with $\rho$ of the form $2^{-jT}$. For all other scales, the non-concentration inequality will be forced to hold with $O_\epsilon(1)$  losses.
		
		We conclude that (a) and (b) can be simultaneously achieved, with probability very close to 1.
		
	\end{proof}
	\begin{remark}
		\label{logsareneeded}	
		Since $(\log \delta^{-1})^{\log\delta^{-1}}\gtrsim \delta^{-100}$, the factor $\delta^{-\upsilon}$ may be replaced with $\log\delta^{-1}$ in the previous lemma. This remains true for the applications of this result in this section and the following one. 	
	\end{remark}
	\begin{cor}
		\label{KT2}	
		Suppose $\Lambda\subset \mathbb{S}^1$ is a $(\delta, s, K_1)$-Katz-Tao $\epsilon$-uniform set such that  for each $\delta$-arc $\theta\in \Lambda$, there is a $(\delta, 1-s, K_2)$-Katz-Tao $\epsilon$-uniform set $\cT_{\theta}$ of $\delta$-tubes in the direction $\theta$. Assume all $|\cT_\theta|_{\delta}$ are comparable.  Let $\cT=\cup_{\theta\in\Lambda}\cT_\theta$.
		
		Let $\upsilon>0$. Then there is a collection of $N\sim\frac{\delta^{-1}K_1K_2}{|\cT|_\delta}$ rigid motions $\cA_i$ such that
		$$\cT_{new}=\cup_{i=1}^N\cA_i(\cT)$$
		is a collection of tubes $($in the slightly larger set $[-2,2]^2)$ having the following structure:  there is  a $(\delta,s, O(K_1\delta^{-\upsilon}))$-Katz-Tao set $\Lambda_{new}\subset\mathbb{S}^1$ with cardinality $\delta^\upsilon K_1\delta^{-s}\lesssim |\Lambda_{new}|\lesssim K_1\delta^{-s}$ and for each $\delta$-arc $\theta\in\Lambda_{new}$  there is a $(\delta,1-s, O(K_2\delta^{-\upsilon}))$-Katz-Tao set $\cT_{\theta,new}$ of tubes with cardinality $\delta^\upsilon K_2\delta^{s-1}\lesssim |\cT_{\theta, new}|\lesssim K_2\delta^{s-1}$ such that
		$$\cT_{new}=\bigcup_{\theta\in\Lambda_{new}}\cT_{\theta,new}.$$
		Moreover, each $T\in\cT_{new}$ appears in at most $O(\delta^{-\upsilon})$ many collections $\cA_i(\cT)$. All implicit constants are allowed to depend on $\upsilon,\epsilon$.
	\end{cor}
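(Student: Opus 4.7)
The plan is to apply Lemma~\ref{KT1} twice---once in the angular variable to rotationally enlarge $\Lambda$, and once in the spatial variable to translationally enlarge each $\cT_\theta$---and then combine the rotations and translations into rigid motions. Since all $|\cT_\theta|_\delta$ are comparable, we have $|\cT|_\delta \sim |\Lambda|_\delta\cdot|\cT_\theta|_\delta$, and the product $\frac{\delta^{-s}K_1}{|\Lambda|_\delta}\cdot\frac{\delta^{s-1}K_2}{|\cT_\theta|_\delta}$ matches the required count $N\sim\frac{\delta^{-1}K_1K_2}{|\cT|_\delta}$.

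First, I apply Lemma~\ref{KT1} to $\Lambda\subset\mathbb{S}^1$ (the passage from $[0,1]$ to the circle poses no real difficulty for the probabilistic argument). This produces $N_1\sim\frac{\delta^{-s}K_1}{|\Lambda|_\delta}$ rotations $R_1,\dots,R_{N_1}$ of $\mathbb{S}^1$ so that $\Lambda_{new}:=\bigcup_j R_j(\Lambda)$ has cardinality in $[\delta^\upsilon K_1\delta^{-s},\, K_1\delta^{-s}]$, is a $(\delta,s,O(K_1\delta^{-\upsilon}))$-Katz-Tao set, and has maximum multiplicity $\delta^{-\upsilon}$. Next, I draw $N_2\sim\frac{\delta^{s-1}K_2}{|\cT_\theta|_\delta}$ random translations $v_1,\dots,v_{N_2}$ independently and uniformly from $[-1,1]^2\cap(\delta\Z)^2$, and define the rigid motions $\cA_{j,k}$ by ``first rotate by $R_j$, then translate by $v_k$.''

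For each $\theta'\in\Lambda_{new}$, the tubes of direction $\theta'$ in $\cT_{new}=\bigcup_{j,k}\cA_{j,k}(\cT)$ come from at most $\delta^{-\upsilon}$ pairs $(j,\theta)$ with $R_j(\theta)=\theta'$ and $\theta\in\Lambda$ (by the angular multiplicity bound). For each such pair, the perpendicular coordinates of the contributing tubes form the set $\{v_k\cdot(\theta')^\perp + p : p \in \text{perp-coords of } R_j(\cT_\theta),\ 1\le k\le N_2\}$. Crucially, for any fixed unit vector $(\theta')^\perp$ the scalar $v_k\cdot(\theta')^\perp$ has a density bounded by $O(1)$ on $[-\sqrt{2},\sqrt{2}]$, so the proof of Lemma~\ref{KT1} transfers verbatim---with $K=K_2$ and dimension $1-s$---to the perpendicular coordinate, yielding failure probability $O(\delta^{100})$ for each fixed $\theta'$ and each dyadic test scale.

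The main obstacle is that a single realization of $v_1,\dots,v_{N_2}$ must simultaneously yield well-spread perpendicular projections for every $\theta'\in\Lambda_{new}$. This is handled by a union bound over the $\lesssim\delta^{-1}$ directions in $\Lambda_{new}$ and the $O(\log\delta^{-1})$ dyadic test scales: the total failure probability remains $\ll 1$, so a good choice of translations exists. Finally, the two $\delta^{-\upsilon}$ multiplicity factors (angular and spatial) combine into a single $\delta^{-\upsilon}$ bound by replacing $\upsilon$ with $\upsilon/2$ at the outset, which concludes the proof.
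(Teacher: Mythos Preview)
Your approach is essentially the same as the paper's: apply Lemma~\ref{KT1} first to $\Lambda$ via rotations to build $\Lambda_{new}$, then to each family $\cT_{\theta'}$ via a common set of translations, and combine the two into $N\sim N_1N_2$ rigid motions, using the $O(\delta^{100})$ failure probability from Lemma~\ref{KT1} together with a union bound over the $\lesssim\delta^{-1}$ directions to guarantee one set of translations works simultaneously for every $\theta'\in\Lambda_{new}$. You are somewhat more explicit than the paper about the projection $v_k\mapsto v_k\cdot(\theta')^\perp$ and about combining the angular and spatial multiplicity bounds, but the argument is the same.
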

	\begin{proof}
		This is a two-step procedure. First, we use (a close variant of) Lemma \ref{KT1}	to find a collection $\mathcal{R}$ of $N_1\sim \frac{\delta^{-s}K_1}{|\Lambda|_\delta}$ rotations, such that
		$$\Lambda_{new}=\bigcup_{r\in{\mathcal {R}}}r(\Lambda)$$
		satisfies the desired properties. For $\theta'\in\Lambda_{new}$, $\theta'=r(\theta)$ with $\theta\in\Lambda$ and $r\in \mathcal{R}$, we define $\cT_{\theta'}$ to be $r(\cT_\theta)$.
		
		Second, we use again Lemma \ref{KT1} to find
		$N_2\sim \frac{\delta^{s-1}K_2}{|\cT_{\theta}|_\delta}$ translates $T$ such that for each $\theta'\in\Lambda_{new}$, the set
		$$\cT_{\theta',new}=\cup_{t\in T}(t+\cT_{\theta'})$$
		satisfies the desired properties. The fact that we can realize this with the same $T$ for each $\theta'\in\Lambda_{new}$ is due the high probabilities we have computed in the proof of the lemma.
		
		The rigid motions $\cA_i$ will be all possible combinations of rotations $r\in\mathcal{R}$ followed by translations $t\in T$. Note that $N\sim N_1N_2$.
		
	\end{proof}
	We are now able to formulate a sharper version of Theorem \ref{Fur}, when it comes to the dependence of $\epsilon$ on $\eta$. While this will not be needed in this paper, it may be useful in future applications.
	
	\begin{theorem}
		\label{Fursharper}	
		Let $0<s\le 1$ and $0\le t\le 2$.  Then for each $\upsilon_1,\eta_1>0$ there is $\epsilon_1(\upsilon_1,\eta_1,s,t)$
		with $\lim_{\upsilon_1,\eta_1\to0}\epsilon_1(\upsilon_1,\eta_1,s,t)=0$,
		such that the following holds for each $\eta>0$ and each (small enough) $\delta$.
		
		Consider a pair $(\cP,\cT)$ consisting of $\delta$-squares and $\delta$-tubes in $[0,1]^2$  such that
		\\
		\\
		(a) $\cP$ is a $(\delta,t,\delta^{-\eta})$-set
		\\
		\\
		(b) for each $p\in\cP$ there is a $(\delta,s,\delta^{-\eta_1})$-set $\cT(p)\subset\cT$ of tubes intersecting $p$, with cardinality $\sim M$.	
		
		Then for each $\upsilon_1>0$
		$$|\cT|\gtrsim \delta^{-\min\{t,\frac{s+t}{2},1\}+\eta+\epsilon_1(\upsilon_1,\eta_1,s,t)}M.$$
	\end{theorem}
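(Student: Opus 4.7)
The strategy is to reduce Theorem \ref{Fursharper} to Theorem \ref{Fur} via a probabilistic inflation of $\cP$. The error $\epsilon(\eta,\eta_1,s,t)$ in Theorem \ref{Fur} is only small when $\eta$ itself is small, whereas in Theorem \ref{Fursharper} the parameter $\eta$ is allowed to be arbitrary and the cost $\delta^{\eta}$ is moved explicitly into the lower bound. The plan is therefore to inflate $\cP$ into a larger set $\cP_{new}$ which is a $(\delta,t,\delta^{-\upsilon_1})$-set, so that Theorem \ref{Fur} applied to $\cP_{new}$ produces an error depending only on $\upsilon_1$ and $\eta_1$, and then to pay $\delta^{-\eta}$ as an upper bound on the number of rigid motions used to carry out the inflation.

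I would first pass, via Lemma \ref{largeunifset}, to an $\upsilon_1$-uniform subset of $\cP$ of comparable size, at a negligible cost that can be absorbed into $\epsilon_1$. Observe that being a $(\delta,t,\delta^{-\eta})$-set with cardinality $|\cP|_\delta$ is the same as being a $(\delta,t,K_1)$-Katz-Tao set with $K_1=\delta^{-\eta}|\cP|_\delta\,\delta^{t}$. I would then apply the natural two-dimensional analogue of Lemma \ref{KT1}, with random translations drawn from $[0,1]^2\cap\delta\Z^2$ in place of $[0,1]\cap\delta\Z$, with parameter $\upsilon=\upsilon_1$. The Stirling and binomial estimates of that proof go through verbatim after replacing $\rho|S|_\rho$ by $\rho^2 |S|_\rho$ when bounding the probability that a given $\rho$-ball is over-concentrated. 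The output is a family of $N\sim \delta^{-t}K_1/|\cP|_\delta=\delta^{-\eta}$ translations $\cA_1,\ldots,\cA_N$ such that $\cP_{new}:=\bigcup_i \cA_i(\cP)$ has cardinality $\sim \delta^{\upsilon_1}K_1\delta^{-t}$, is a $(\delta,t,O_{\upsilon_1}(K_1\delta^{-\upsilon_1}))$-Katz-Tao set, and has the property that each $\delta$-square lies in at most $\delta^{-\upsilon_1}$ of the copies $\cA_i(\cP)$. Computing the ratio of this Katz-Tao constant and $|\cP_{new}|_\delta \delta^{t}$ then shows that $\cP_{new}$ is a $(\delta,t,O_{\upsilon_1}(\delta^{-2\upsilon_1}))$-set, as desired.

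For each $p'\in\cP_{new}$ I would choose an arbitrary index $i(p')$ with $p'\in\cA_{i(p')}(\cP)$ and set $\cT_{new}(p'):=\cA_{i(p')}\bigl(\cT(\cA_{i(p')}^{-1}(p'))\bigr)\subset\cT_{new}:=\bigcup_i \cA_i(\cT)$. Since translations preserve the $(\delta,s,\delta^{-\eta_1})$-set property of tubes of each given direction, the pair $(\cP_{new},\cT_{new})$ satisfies the hypotheses of Theorem \ref{Fur} with parameters $(\eta,\eta_1)$ there replaced by $(3\upsilon_1,\eta_1)$, giving
$$|\cT_{new}|\gtrsim \delta^{-\min\{t,(s+t)/2,1\}+\epsilon(3\upsilon_1,\eta_1,s,t)}\,M.$$
Combined with the deterministic upper bound $|\cT_{new}|\le N|\cT|\lesssim \delta^{-\eta}|\cT|$, this yields
$$|\cT|\gtrsim \delta^{-\min\{t,(s+t)/2,1\}+\eta+\epsilon(3\upsilon_1,\eta_1,s,t)}\,M,$$
and one sets $\epsilon_1(\upsilon_1,\eta_1,s,t):=\epsilon(3\upsilon_1,\eta_1,s,t)$. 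The main technical obstacle I foresee is writing out the two-dimensional analogue of Lemma \ref{KT1} cleanly, verifying the Katz-Tao condition at every intermediate scale $\rho\in[\delta,1]$ and not merely at $\rho=\delta$; the remaining steps are routine bookkeeping.
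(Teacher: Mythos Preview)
Your proposal is correct and follows essentially the same approach as the paper's proof: uniformize $\cP$, apply the two-dimensional version of Lemma \ref{KT1} to inflate it into a $(\delta,t,\delta^{-O(\upsilon_1)})$-set using $N\sim\delta^{-\eta}$ translates, transport the tube sets $\cT(p)$ along with the squares, apply Theorem \ref{Fur} to the inflated configuration, and then divide by $N$. The only differences are cosmetic bookkeeping (the paper tracks the uniformization loss explicitly and ends with $\epsilon_1=\epsilon(2\upsilon_1,\eta_1,s,t)+\upsilon_1$ rather than your $\epsilon(3\upsilon_1,\eta_1,s,t)$), which does not affect the conclusion.
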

	\begin{proof}	
		Let first  $\cP_{unif}$ be a $\upsilon_1$-uniform subset of $\cP$ with $|\cP_{unif}|\gtrsim \delta^{-\upsilon_1}|\cP|$.	
		We apply (the two dimensional version of) Lemma \ref{KT1} to $\cP_{unif}$ with $K=|\cP_{unif}|\delta^t\delta^{-\eta-\upsilon_1}$. Using a collection $T$ of $N\sim \delta^{-\eta-\upsilon_1}$ translates, we get a $(\delta,t,O(K\delta^{-\upsilon_1}))$-Katz-Tao set $\cP_{new}=\cup_{t\in T}(t+\cP_{unif})$ with $$K\delta^{-t+\upsilon_1}\lesssim |\cP_{new}|\lesssim K\delta^{-t}.$$
		Note that these turn  $\cP_{new}$ into a $(\delta,t, \delta^{-2\upsilon_1})$-set. Note also that $|\cP_{new}|\lesssim |\cP|\delta^{-\eta-\upsilon_1}$.
		
		For each $p\in\cP_{new}$ lying in some translate $t'+\cP_{unif}$, $p=t'+p'$, define $\cT(p)=t'+\cT(p')$. If there is more than one such pair $(p',t')$, just pick any of them.
		
		We apply Theorem \ref{Fur} to this configuration to get
		$$|\bigcup_{p\in\cP_{new}}\cT(p)|\gtrsim \delta^{-\min\{t,\frac{s+t}{2},1\}+\epsilon(2\upsilon_1,\eta_1,s,t)}M.$$
		Note also that
		\begin{align*}
			|\bigcup_{p\in\cP_{new}}\cT(p)|&\le |\bigcup_{p'\in\cP_{unif}}\bigcup_{t'\in T}(t'+\cT(p'))|\\&=|\bigcup_{t'\in T}(t'+\bigcup_{p'\in\cP_{unif}}\cT(p'))|\\&\le |T||\bigcup_{p'\in\cP_{unif}}\cT(p')|.
		\end{align*}
		Combining these we conclude that
		$$|\cT|\ge |\bigcup_{p'\in\cP_{unif}}\cT(p')|\gtrsim \delta^{-\min\{t,\frac{s+t}{2},1\}+\eta+\upsilon_1+\epsilon(2\upsilon_1,\eta_1,s,t)}M.$$
		Our result is thus verified with  $\epsilon_1(\upsilon_1,\eta_1,s,t)=\epsilon(2\upsilon_1,\eta_1,s,t)+\upsilon_1$.
	\end{proof}
	\section{Consequences of Theorem \ref{thm: main}}
	\label{sec4}
	
	\begin{defn}\label{KK}
		Given $K_1,K_2\ge 1$, we call a  collection $\cT$ of $\delta$-tubes a $(K_1,K_2)$-set if its direction set $\Lambda$ is a $(\delta,s,K_1)$-Katz-Tao set and each $\cT_\theta$ is a $(\delta,1-s,K_2)$-Katz-Tao set.
	\end{defn}
	
	Let us write $\cS\cT(\delta,K_1,K_2)$ for the smallest constant that satisfies the inequality
	\begin{equation}
		\label{defst}
		|\cP_r(\cT)|\le \cS\cT(\delta,K_1,K_2)  \frac{\delta^{-2}}{r^3}
	\end{equation}
	for all $r\ge 1$ and each $(K_1,K_2)$-set $\cT$. Theorem \ref{thm: main} will follow if we prove the superficially more general inequality $\cS\cT(\delta,K_1,K_2)\lessapprox 1$, for each $K_1,K_2=O(1)$.
	\medskip
	
	In this section we relate $\cS\cT(\delta,K_1,K_2)$ to $\cS\cT(\delta,O(1),O(1))$, for arbitrary $K_1,K_2$. In particular, we show how Theorem \ref{thm: mainbettt} follows from Theorem \ref{thm: main}. The inequalities we derive will be used at larger scales, in the final argument in Section \ref{sec7}. In this section we will not impose the restrictive assumption $s\le 1/2$, as we only compare, rather than explicitly estimate various upper bounds.
	\medskip
	
	We will  repeatedly use the following lemma.
	\begin{lemma}
		\label{diffcoleff}	
		Consider pairwise disjoint collections of tubes $\cT_1,\ldots,\cT_N$ and write $\cT=\cT_1\cup\ldots\cup \cT_N$. Then for each $1\le r\lesssim \delta^{-1}$	
		$$
		|\cP_r(\cT)|\lesssim  \log (1/\delta)^2 \max_{1\le M\lesssim \log(1/\delta)N}M^{-1} \sum_{i=1}^N |\cP_{r/M}(\cT_i)|.$$
	\end{lemma}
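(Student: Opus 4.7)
The plan is a two-step dyadic pigeonhole argument followed by double counting. For each $p\in\cP_r(\cT)$, let $r_i(p)$ denote the number of tubes of $\cT_i$ meeting $p$. Since the collections $\cT_i$ are pairwise disjoint, $\sum_{i=1}^N r_i(p)\sim r$.

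First I would perform a dyadic pigeonhole on the scales of $r_i(p)$. For each dyadic $M\in\{1,2,4,\ldots\}$ with $M\le r$, set
\[
A_M(p)=\{i:\; r_i(p)\in [r/(2M),r/M)\}.
\]
Then $r\sim \sum_i r_i(p)\sim \sum_M |A_M(p)|\cdot (r/M)$, and since there are $O(\log(1/\delta))$ dyadic scales in the range $1\le M\le r\le\delta^{-1}$, the pigeonhole principle gives, for each $p$, a dyadic $M=M(p)$ with
\[
|A_{M(p)}(p)|\gtrsim \frac{M(p)}{\log(1/\delta)}.
\]
Since trivially $|A_{M(p)}(p)|\le N$, this also forces $M(p)\lesssim N\log(1/\delta)$.

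Next, I would partition $\cP_r(\cT)$ according to the value of $M(p)$. Since there are $O(\log(1/\delta))$ admissible dyadic values for $M(p)$, a second pigeonhole produces a single dyadic $M$ and a subset $\cP'\subset\cP_r(\cT)$ with
\[
|\cP'|\gtrsim \frac{|\cP_r(\cT)|}{\log(1/\delta)}\quad\text{and}\quad |A_M(p)|\gtrsim \frac{M}{\log(1/\delta)}\ \ \text{for all } p\in\cP'.
\]
Finally I would double count pairs $(p,i)$ with $p\in\cP'$ and $i\in A_M(p)$. From below,
\[
\#\{(p,i)\}=\sum_{p\in\cP'}|A_M(p)|\gtrsim \frac{M\,|\cP_r(\cT)|}{\log(1/\delta)^2}.
\]
From above, for each fixed $i$, $i\in A_M(p)$ implies $p$ meets $\sim r/M$ tubes of $\cT_i$, so $p\in\cP_{r/M}(\cT_i)$ (up to the usual $\sim$ convention, absorbing a constant). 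Hence
\[
\#\{(p,i)\}\le \sum_{i=1}^N |\cP_{r/M}(\cT_i)|.
\]
Combining these two estimates and taking the maximum over the admissible $M$ yields the stated inequality.

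The argument is essentially a clean pigeonhole plus double counting; the only mild subtlety is keeping track of the two $\log(1/\delta)$ factors (one from the dyadic decomposition of $\sum_i r_i(p)$, one from fixing a common $M$ across $p$), and confirming that the range of $M$ is indeed $1\le M\lesssim N\log(1/\delta)$, which follows from $|A_M(p)|\le N$ together with the lower bound $|A_M(p)|\gtrsim M/\log(1/\delta)$.
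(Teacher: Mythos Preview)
Your proof is correct and follows essentially the same approach as the paper: a dyadic pigeonhole on the level $r/M$ to select $M(p)$ for each $p$, a second pigeonhole to fix a common $M$, and then double counting pairs $(p,i)$. The paper's writeup is slightly terser but the steps and the accounting of the two $\log(1/\delta)$ factors are identical.
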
	
	\begin{proof}
		By dyadic  pigeonholing, there exists a  dyadic number $M\lesssim \log(1/\delta)N$ and a  subset $\cP\subset \cP_r(\cT)$ such that $|\cP|\gtrsim (\log(1/\delta))^{-1} |\cP_r(\cT)|$ and for each $p\in \cP$ there exist $\gtrsim M/\log(1/\delta)$ sets $\cT_i$ such that $p\in \cP_{r/M}(\cT_i)$. Let us explain why. First, for each $p\in \cP_r(\cT)$ we have
		$$r\sim \sum_{M\text{ dyadic}\atop_{M\le r}}\frac{r}{M}|\{i\le N:\;p\in\cP_{r/M}(\cT_i)\}|.$$
		This allows us to select some dyadic $M_p$ such that  $|\{i:\;p\in\cP_{r/M_p}(\cT_i)\}|\gtrsim M_p/\log(1/\delta)$.	 Since there are $O(\log(1/\delta))$ possible values for $M_p$, one of them, let us call it $M$, will be shared by at least a $1/\log(1/\delta)$-fraction of $\cP_r(\cT)$, that we call $\cP$.	
		
		Apply double counting to conclude that
		$$
		|\cP_r(\cT)|\lesssim  |\cP|\log(1/\delta)\lesssim \log (1/\delta)^2 M^{-1} \sum_{i=1}^N |\cP_{r/M}(\cT_i)|.$$
	\end{proof}
	Here is the main result of this section. When $K_1,K_2=O_\epsilon(1)$, the following inequality improves the  trivial upper bound $\frac{\delta^{-2}}{r^3}$ (a mere consequence of the definition of $\cS\cT(\delta,O_\epsilon(1),O_\epsilon(1))$),  replacing it with the stronger upper bound $\delta^{-1}\frac{|\cT|}{r^3}$.
	\begin{prop}\label{cor: mainKT} Assume $\cT$ is a $(K_1,K_2)$-set.  Then for each $r\ge 1$ and $\epsilon>0$
		\begin{equation}
			\label{kjijfiriurtgiurtigut}	
			|\cP_r(\cT)|\lesssim_\epsilon \cS\cT(\delta,O_\epsilon(1),O_\epsilon(1))(K_1K_2)^2 \delta^{-1+o_\epsilon(1)} \frac{|\cT|}{r^3}.
		\end{equation}
	\end{prop}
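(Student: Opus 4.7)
The plan is to reduce Proposition \ref{cor: mainKT} to the case of $(O_\epsilon(1),O_\epsilon(1))$-Katz-Tao collections by combining the rigid-motion construction of Corollary \ref{KT2} with a multiplicity-cum-dyadic-pigeonhole argument on the richness levels inside $\cT_{new}$.

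First, I would dispose of the regime $r\lesssim K_1K_2$: the trivial incidence inequality $|\cP_r(\cT)|\leq|\cT|\delta^{-1}/r$ already yields a bound at most the target, since $r^2\lesssim (K_1K_2)^2$ and $\cS\cT(\delta,O_\epsilon(1),O_\epsilon(1))\geq 1$. Hence I may assume $r\gtrsim K_1K_2$. After making $\Lambda$ and each $\cT_\theta$ $\epsilon$-uniform via Lemma \ref{useful} and pigeonholing so that the $|\cT_\theta|_\delta$ are comparable (at the cost of $\delta^{-o_\epsilon(1)}$ factors), I apply Corollary \ref{KT2} to produce rigid motions $\cA_1,\dots,\cA_N$ with $N\sim \delta^{-1}K_1K_2/|\cT|$ and a collection $\cT_{new}=\bigcup_i\cA_i(\cT)$ of size $|\cT_{new}|\sim\delta^{-1}K_1K_2$ that is essentially $(K_1\delta^{-\upsilon},K_2\delta^{-\upsilon})$-Katz-Tao, with every tube of $\cT_{new}$ appearing in at most $\delta^{-\upsilon}$ of the copies $\cA_i(\cT)$.

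The key step is the incidence transfer. For each pair $(p,i)$ with $p\in\cP_r(\cT)$, the square $\cA_i(p)$ is crossed by $\gtrsim r$ tubes of $\cA_i(\cT)\subset\cT_{new}$. Writing $R(q)$ for the number of tubes of $\cT_{new}$ through a square $q$, one obtains the multiplicity estimate
\[
\#\{(p,i):\cA_i(p)=q,\;p\in\cP_r(\cT)\}\;\leq\;\delta^{-\upsilon}\,R(q)/r,
\]
because each such $i$ contributes $\gtrsim r$ distinct tubes to $\cT_{new}$ through $q$, and these tubes can be shared across at most $\delta^{-\upsilon}$ copies. Dyadic pigeonholing over the actual richness $R(\cA_i(p))\geq r$ selects a single dyadic $r''\geq r$ carrying a $\log$-fraction of all pairs, producing
\[
N|\cP_r(\cT)|\;\lesssim\;\log(1/\delta)\,\delta^{-\upsilon}\,\frac{r''}{r}\,|\cP_{\sim r''}(\cT_{new})|.
\]

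Finally, I would estimate $|\cP_{\sim r''}(\cT_{new})|$ by $\cS\cT(\delta,O_\epsilon(1),O_\epsilon(1))$. After re-uniformizing $\cT_{new}$ via Lemma \ref{useful} and applying Lemma \ref{fromgentoexact} to $\Lambda_{new}$ and to each $\cT_{\theta,new}$, I obtain a decomposition of $\cT_{new}$ into $\lesssim K_1K_2\,\delta^{-o_\epsilon(1)}$ subfamilies, each of which is an $(O_\epsilon(1),O_\epsilon(1))$-Katz-Tao set. Combining the definition of $\cS\cT(\delta,O_\epsilon(1),O_\epsilon(1))$ with Lemma \ref{diffcoleff} applied to this decomposition gives $|\cP_{\sim r''}(\cT_{new})|\lesssim (K_1K_2)^3\cS\cT(\delta,O_\epsilon(1),O_\epsilon(1))\,\delta^{-2-o_\epsilon(1)}/(r'')^3$. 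Substituting, the factor $(r'')^{-2}/r$ is maximized at $r''=r$, yielding $N|\cP_r(\cT)|\lesssim (K_1K_2)^3\cS\cT\,\delta^{-2-o_\epsilon(1)}/r^3$. Dividing by $N$ and using $N^{-1}=|\cT|\delta/(K_1K_2)$ gives the claim. The principal obstacle is calibrating the multiplicity factor $r''/r$ against the $1/(r'')^3$ coming from $\cS\cT$ so that the combined exponent in $r$ is exactly $-3$; a naive summation over $r''\geq r$ of $r''|\cP_{r''}(\cT_{new})|$ would lose a crucial factor of $r$ and be insufficient.
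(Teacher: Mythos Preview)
Your proposal is correct and uses exactly the same ingredients as the paper: Lemma~\ref{fromgentoexact}, Lemma~\ref{diffcoleff}, Corollary~\ref{KT2}, and the multiplicity/pigeonhole transfer from $\cP_r(\cT)$ to rich squares for $\cT_{new}$. The organization, however, is different. The paper proceeds in three layers: (Step~1) the crude bound $(K_1K_2)^3\delta^{-2}/r^3$ by splitting into $(O_\epsilon(1),O_\epsilon(1))$-pieces; (Step~2) a bootstrap that applies Corollary~\ref{KT2} \emph{only} when $K_1,K_2=O_\epsilon(1)$, upgrading $\delta^{-2}$ to $\delta^{-1}|\cT^l|$; (Step~3) a final splitting of the general $(K_1,K_2)$-set into $\sim K_1K_2$ pieces, feeding each into Step~2, so that the $(K_1K_2)^2$ emerges as the square of the number of pieces via Lemma~\ref{diffcoleff}. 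You instead apply Corollary~\ref{KT2} directly to the full $(K_1,K_2)$-set and then run the Step~1-type estimate on $\cT_{new}$; your $(K_1K_2)^2$ appears as $(K_1K_2)^3/N$ with $N\sim \delta^{-1}K_1K_2/|\cT|$. This effectively fuses the paper's Steps~2 and~3 into a single pass, which is arguably more economical, though the paper's layout has the advantage of isolating the rigid-motion gain from the splitting cost. Your multiplicity inequality $\#\{(p,i):\cA_i(p)=q\}\le \delta^{-\upsilon}R(q)/r$ is exactly the paper's observation (phrased there as ``$q$ in $M$ sets $\cP_i$ implies $q\in\cP_L(\cT_{new}^l)$ with $L\sim Mr$''), just parametrized by $r''=R(q)$ rather than by $M=r''/r$. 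Finally, your disposal of the range $r\lesssim K_1K_2$ is valid but unnecessary: the multiplicity argument already works uniformly in $r$.
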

	\begin{proof}
		We start by pointing out the following trade-off. As $\epsilon\to 0$, the exponent $o_\epsilon(1)$ will go to zero, while the constants $O_\epsilon(1)$ will go to infinity. 
		\\
		\\
		Step 1. We first prove the superficially weaker inequality (as our hypothesis forces $|\cT|\le K_1K_2\delta^{-1}$)
		\begin{equation}
			\label{kjijfiriurtgiurtigut1}	
			|\cP_r(\cT)|\lesssim_\epsilon \cS\cT(\delta,O_\epsilon(1),O_\epsilon(1))\delta^{o_\epsilon(1)} (K_1K_2)^3\frac{\delta^{-2}}{r^3}.
		\end{equation}
		To see this, we use Lemma \ref{fromgentoexact} to partition $\cT$ into $N=O_\epsilon(\delta^{o_\epsilon(1)}K_1K_2)$ many $(O_\epsilon(1),O_\epsilon(1))$-sets $\cT_i$. Then we apply Lemma \ref{diffcoleff}, noting that (by the definition of $\cS\cT(\delta,O_\epsilon(1),O_\epsilon(1))$)
		$$|\cP_{r/M}(\cT_i)|\le \cS\cT(\delta,O_\epsilon(1),O_\epsilon(1))\frac{\delta^{-2}}{(r/M)^3}.$$
		It follows that 
		\begin{align*}
			|\cP_r(\cT)|&\lesssim  \log (1/\delta)^2 \max_{1\le M\lesssim \log(1/\delta)N}M^{-1} \sum_{i=1}^N |\cP_{r/M}(\cT_i)|\\&\lesssim \log (1/\delta)^4\cS\cT(\delta,O_\epsilon(1),O_\epsilon(1))N^3\frac{\delta^{-2}}{r^3}\\&\lesssim_\epsilon \cS\cT(\delta,O_\epsilon(1),O_\epsilon(1))\delta^{o_\epsilon(1)} (K_1K_2)^3\frac{\delta^{-2}}{r^3}.
		\end{align*}
		\\
		\\
		Step 2. We next prove \eqref{kjijfiriurtgiurtigut}	 in the case $K_1,K_2=O_\epsilon(1)$. 
		Let us partition $\cT$ into sets $\cT^l$, each of which satisfies the requirements in Corollary \ref{KT2}. We achieve this as follows.
		
		Apply Lemma \ref{useful} to write each $\cT_\theta$ as a union of $O_\epsilon(\delta^{o_\epsilon(1)})$ many $\epsilon$-uniform   sets $\cT_{\theta,i}$. By allowing some of these sets to be empty, we split $\cT$ into $O(\delta^{o_\epsilon(1)})$
		many sets $\cT_j$, such that each $\cT_j$ contains at most one $\cT_{\theta,i}$ for each $\theta$ (by that we mean, it either contains all tubes from some $\cT_{\theta,i}$ and no tubes from any other $\cT_{\theta,i'}$, or it contains no tubes in the direction $\theta$).
		Moreover, we require that $|\cT_{\theta,i}|$ have similar size for all $\theta$ contributing to $\cT_j$. This is achieved by selecting sets $\cT_{\theta,i}$ with size inside a fixed dyadic level set.
		
		For each $\cT_j$ we call $\Lambda_j\subset \Lambda$ the corresponding set of contributing directions. We apply  Lemma \ref{useful} again to partition each $\Lambda_j$ into $O(\delta^{o_\epsilon(1)})$ many $\epsilon$-uniform    sets $\Lambda_{j,k}$. Overall, we have a partition of $\cT$ into $O(\delta^{o_\epsilon(1)})$ many  sets, and we call them $\cT^l$. The size of these sets will vary, but this will do no harm to the argument. 
		
		Due to Lemma \ref{diffcoleff}, it suffices to prove that for each $l$ and $r\ge 1$
		\[
		|\cP_r(\cT^l)|\lesssim_\epsilon \cS\cT(\delta,O_\epsilon(1),O_\epsilon(1))\delta^{-1+o_\epsilon(1)} \frac{|\cT^l|}{r^3}.
		\]
		We apply Corollary \ref{KT2} to each $\cT^l$. This result provides $N\sim _\epsilon\frac{\delta^{-1} }{|\cT^l|}$ rigid motions $\{\cA_i\}_{i=1}^N$ and a $(O_\epsilon(\delta^{o_\epsilon(1)}),O_\epsilon(\delta^{o_\epsilon(1)}))$-set $\cT^l_{new}$.

		Note  that the sets $\cP_i=\cP_{r}(\cA_i(\cT^l))$ have the same cardinality as $\cP_{r}(\cT^l)$, as they are rigid motions of the latter. We use the fact that
		$$N|\cP_{r}(\cT^l)|=\sum_{i=1}^{N}|\cP_i|=\sum_{1\le M\le N}M\#\{q:\;q \text{ is in }M\text{ sets }\cP_i\}.$$
		Since Corollary \ref{KT2} guarantees that each tube in $\cT^l_{new}$ appears in at most $O(\delta^{o_\epsilon(1)})$ of the sets $\cA_i(\cT^l)$, it follows that each $q$ that is in $M$ of the sets $\cP_i$ is in fact in $\cP_{L}(\cT^l_{new})$, for some $L\sim Mr\delta^{o_\epsilon(1)}$.
		Using \eqref{kjijfiriurtgiurtigut1} we find that
		$$|\cP_{L}(\cT^l_{new})|\lesssim_\epsilon \cS\cT(\delta,O_\epsilon(1),O_\epsilon(1))\delta^{o_\epsilon(1)} \frac{\delta^{-2}}{L^3},$$
		and thus
		$$\#\{q:\;q \text{ is in }M\text{ sets }\cP_i\}\lesssim_\epsilon \cS\cT(\delta,O_\epsilon(1),O_\epsilon(1))\delta^{o_\epsilon(1)} \frac{\delta^{-2}}{(Mr)^3}.$$
		Putting things together, it follows that
		$$|\cP_r(\cT^l)|\lesssim_\epsilon \cS\cT(\delta,O_\epsilon(1),O_\epsilon(1)) \delta^{-1+o_\epsilon(1)}\frac{|\cT^l|}{r^3}.$$
		\\
		\\
		Step 3. We are left with verifying \eqref{kjijfiriurtgiurtigut}for arbitrary $K_1,K_2\ge 1$.	The argument is similar to the one in Step 1, with one key difference. 
		
		First, we use Lemma \ref{fromgentoexact} to partition $\cT$ into $N=O_\epsilon(\delta^{o_\epsilon(1)}K_1K_2)$ many $(O_\epsilon(1),O_\epsilon(1))$-sets $\cT_k$. To find an upper bound for $|\cP_{r/M}(\cT_k)|$ we do not use the definition of $\cS\cT(\delta,O_\epsilon(1),O_\epsilon(1))$ as in Step 1, but rather the improved bound from Step 2
		$$|\cP_{r/M}(\cT_k)|\lesssim_\epsilon \cS\cT(\delta,O_\epsilon(1),O_\epsilon(1)) \delta^{-1+o_\epsilon(1)} \frac{|\cT_k|}{(r/M)^3}.$$
		Lemma \ref{diffcoleff} finishes the argument as follows
		\begin{align*}
			|\cP_r(\cT)|&\lesssim  \log (1/\delta)^2 \max_{1\le M\lesssim \log(1/\delta)N}M^{-1} \sum_{k=1}^N |\cP_{r/M}(\cT_k)|\\&\lesssim_\epsilon \cS\cT(\delta,O_\epsilon(1),O_\epsilon(1)) \delta^{-1+o_\epsilon(1)} \frac{\sum_k|\cT_k|}{r^3}N^2\\&\lesssim_\epsilon \cS\cT(\delta,O_\epsilon(1),O_\epsilon(1))(K_1K_2)^2 \delta^{-1+o_\epsilon(1)} \frac{|\cT|}{r^3}.
		\end{align*}
	\end{proof}
	We record the following immediate corollary. Note that the quantity $\delta|\cT|^3/r^3$ is larger than $|\cT|^2/r^3$, since the hypothesis forces $|\cT|\gtrsim \delta^{-1}$.	
	\begin{cor}\label{cor: main} Let $s\in (0, 1)$.
		Suppose $\Lambda\subset \mathbb{S}^1$ is a $(\delta, s, C_1)$-set such that for each $\delta$-arc $\theta\in \Lambda$, there is a  $(\delta, 1-s,C_2)$-set $\cT_{\theta}$ of $\delta$-tubes in the direction $\theta$. We assume $|\cT_\theta|$ are comparable. Let $\cT=\cup_{\theta\in \Lambda} \cT_{\theta}$.
		Then for each $r\ge 1$ and $\epsilon>0$
		\[
		|\cP_r(\cT)|\lesssim_\epsilon  \cS\cT(\delta,O_\epsilon(1),O_\epsilon(1))(C_1C_2)^2 \delta\frac{|\cT|^3}{r^3}.
		\]
	\end{cor}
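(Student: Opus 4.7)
The plan is to derive Corollary \ref{cor: main} as a direct bookkeeping consequence of Proposition \ref{cor: mainKT} by converting the $(\delta, s, C)$-set hypotheses on $\Lambda$ and the $\cT_\theta$ into Katz-Tao conditions. The key elementary observation is that any $(\delta, s, C)$-set $A$ is automatically a $(\delta, s, K)$-Katz-Tao set with $K \le C |A|_\delta \delta^s$, since one may rewrite the defining inequality as $Cr^s |A|_\delta = (C|A|_\delta \delta^s)(r/\delta)^s$. In particular, there is no new combinatorial content to produce here; the work is entirely in matching constants.

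Applying this observation to $\Lambda$ produces a Katz-Tao constant $K_1 \lesssim C_1 |\Lambda| \delta^s$; applying it to each $\cT_\theta$, all of which have comparable cardinality by hypothesis, gives a Katz-Tao constant $K_2 \lesssim C_2 |\cT_\theta| \delta^{1-s}$. Since the $|\cT_\theta|$ are comparable, one has $|\cT| \sim |\Lambda| \cdot |\cT_\theta|$, and therefore
\[
K_1 K_2 \lesssim C_1 C_2 |\Lambda| |\cT_\theta| \delta \sim C_1 C_2 \delta |\cT|.
\]
I would then feed the pair $(K_1,K_2)$ into Proposition \ref{cor: mainKT} to obtain
\[
|\cP_r(\cT)| \lesssim_\epsilon \cS\cT(\delta, O_\epsilon(1), O_\epsilon(1))(C_1 C_2 \delta |\cT|)^2 \delta^{-1+o_\epsilon(1)} \frac{|\cT|}{r^3} = \cS\cT(\delta, O_\epsilon(1), O_\epsilon(1))(C_1 C_2)^2 \delta^{1+o_\epsilon(1)} \frac{|\cT|^3}{r^3},
\]
and the $\delta^{o_\epsilon(1)}$ factor is absorbed into $\lesssim_\epsilon$ by passing to a slightly smaller auxiliary $\epsilon$, yielding the stated inequality.

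I do not expect a serious obstacle, since the entire combinatorial heart of the estimate has already been isolated in Proposition \ref{cor: mainKT}. The one mild point requiring care is that the comparability hypothesis on the $|\cT_\theta|$ is used in exactly one place, namely to replace $|\Lambda| |\cT_\theta|$ by $|\cT|$ in the computation of $K_1 K_2$; without it one would be forced to use $\max_\theta |\cT_\theta|$, which could in principle be larger than $|\cT|/|\Lambda|$ and would weaken the conclusion. Beyond this, the derivation is arithmetic.
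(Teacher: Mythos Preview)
Your proposal is correct and is essentially identical to the paper's own proof: the paper simply notes that $\cT$ is a $(K_1,K_2)$-set with $K_1=C_1|\Lambda|\delta^{s}$, $K_2=C_2|\cT_\theta|\delta^{1-s}$, so that $K_1K_2\sim \delta C_1C_2|\cT|$, and then invokes Proposition~\ref{cor: mainKT}. Your added remarks on the role of the comparability hypothesis and on absorbing the $\delta^{o_\epsilon(1)}$ factor are accurate and merely make explicit what the paper leaves implicit.
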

	\begin{proof}
		It suffices to note that $\cT$ is a $(K_1,K_2)$-set  with $K_1=C_1|\Lambda|\delta^{s}$ and $K_2=C_2|\cT_\theta|\delta^{1-s}$, and that $K_1K_2\sim \delta C_1C_2|\cT|$.
		
	\end{proof}

	We close this section with one last result of the same type.
	
	\begin{prop}
		\label{actthis}	
		Suppose $\Lambda\subset \mathbb{S}^1$ is an $\eta$-uniform $(\delta, s,1)$-set with $|\Lambda|\sim \delta^{-s}$ and suppose that for each $\delta$-arc $\theta\in \Lambda$ there is a $(\delta, 1-s,1)$-set $\cT_{\theta}$ of $\delta$-tubes in direction $\theta$ with $|\cT_{\theta}|\sim \delta^{-1+s}$.  Let $\cT=\cup_{\theta\in \Lambda} \cT_{\theta}$, so $|\cT|\sim \delta^{-1}$. Assume $\cT$ is $\eta$-uniform.
		
		Let $\rho_0\ge \delta$, and consider the collection $\cT^{\rho_0}$ of $\rho_0$-thickenings of the tubes in  $\cT$.
		
		Then for each $\br\ge 1$ and $\epsilon>0$
		\[
		|\cP_\br(\cT^{\rho_0})|\lesssim_{\epsilon,\eta} \cS\cT(\rho_0,O_\epsilon(1),O_\epsilon(1))  \rho_0^{1+o_\epsilon(1)}\frac{|\cT^{\rho_0}|^3}{\br^3}.
		\]
	\end{prop}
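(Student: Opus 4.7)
The strategy is to transfer the hypotheses from scale $\delta$ up to scale $\rho_0$ and then invoke Proposition \ref{cor: mainKT} applied to $\cT^{\rho_0}$ at the coarser scale $\rho_0$. More precisely, I would verify that $\cT^{\rho_0}$ is a $(K_1,K_2)$-set at scale $\rho_0$ in the sense of Definition \ref{KK} with
\[
K_1 \sim_\eta \rho_0^{s}|\Lambda^{\rho_0}|, \qquad K_2 \sim_\eta \rho_0^{1-s}|(\cT^{\rho_0})_{\bar\theta}|,
\]
and with all fibers $|(\cT^{\rho_0})_{\bar\theta}|$ comparable in $\bar\theta$, so that $K_1K_2 \sim_\eta \rho_0|\cT^{\rho_0}|$.

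To analyze the direction set $\Lambda^{\rho_0}$, I would use that $\eta$-uniformity of $\Lambda$ forces each $\bar\theta \in \Lambda^{\rho_0}$ to contain $M_\Lambda \sim_\eta |\Lambda|/|\Lambda^{\rho_0}|$ of the $\delta$-arcs of $\Lambda$. Combined with the $(\delta,s,1)$-set property $|\Lambda \cap I|_\delta \le \rho^s \delta^{-s}$ for $\rho$-intervals $I$ with $\rho \in [\rho_0,1]$, this gives
\[
|\Lambda^{\rho_0} \cap I|_{\rho_0} \lesssim_\eta M_\Lambda^{-1}|\Lambda \cap I|_\delta \le \rho^s |\Lambda^{\rho_0}|,
\]
so $\Lambda^{\rho_0}$ is a $(\rho_0,s,K_1)$-Katz-Tao set with $K_1=O_\eta(\rho_0^s|\Lambda^{\rho_0}|)$. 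An analogous push-forward on each $\cT_\theta$ --- using that by $\eta$-uniformity of $\cT$ every $\rho_0$-tube of $\cT^{\rho_0}$ contains the same number $M_\cT \sim_\eta \delta^{-1}/|\cT^{\rho_0}|$ of $\delta$-tubes, and summing the $(\delta,1-s,1)$-property across the $M_\Lambda$ directions $\theta\subset \bar\theta$ --- yields $K_2=O_\eta(\rho_0^{1-s}|(\cT^{\rho_0})_{\bar\theta}|)$ and the comparability of all $|(\cT^{\rho_0})_{\bar\theta}|$.

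Finally I would feed this into Proposition \ref{cor: mainKT} at scale $\rho_0$:
\[
|\cP_\br(\cT^{\rho_0})| \lesssim_\epsilon \cS\cT(\rho_0,O_\epsilon(1),O_\epsilon(1))(K_1K_2)^2 \rho_0^{-1+o_\epsilon(1)}\frac{|\cT^{\rho_0}|}{\br^3}.
\]
Substituting $(K_1K_2)^2 \sim_\eta \rho_0^2 |\cT^{\rho_0}|^2$ collapses the right-hand side to exactly $\cS\cT(\rho_0,O_\epsilon(1),O_\epsilon(1))\rho_0^{1+o_\epsilon(1)}|\cT^{\rho_0}|^3/\br^3$, as claimed.

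The main obstacle is the tube step: $\delta$-tubes in distinct directions $\theta \subset \bar\theta$ may thicken to the same $\rho_0$-tube, so one must carefully track the collapse in multiplicity $M_\cT$ and check that $\eta$-uniformity of $\cT$ (which is stated at scales $\rho=2^{-jT}$) is strong enough both to make $|(\cT^{\rho_0})_{\bar\theta}|$ comparable across $\bar\theta$ and to yield the Katz-Tao control at scale $\rho_0$ with only $O_\eta(1)$ losses. The technical point that $\rho_0$ may not lie in the uniformity grid $\{2^{-jT}\}$ is absorbed by Lemma \ref{goodallscales} at the cost of a constant depending only on $\eta$.
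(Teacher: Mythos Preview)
Your proposal is correct and follows essentially the same route as the paper. The paper phrases the verification in terms of showing that $\Lambda_{\rho_0}$ is a $(\rho_0,s,O_\eta(1))$-set (via Lemma \ref{tradim}) and that each $\cT^{\rho_0}_\tau$ is a $(\rho_0,1-s,O_\eta(1))$-set with comparable sizes (via the uniform multiplicity $N=M_\cT$), then invokes Corollary \ref{cor: main}; your Katz--Tao formulation with $K_1K_2\sim_\eta \rho_0|\cT^{\rho_0}|$ fed into Proposition \ref{cor: mainKT} is the same computation unwrapped one layer.
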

	\begin{proof}
		It suffices to  prove that the collection $\cT^{\rho_0}$ satisfies the assumptions of Corollary  \ref{cor: main} with $\delta$ replaced by $\rho_0$ and $C_1,C_2=O_\eta(1)$.
		
		First, by Lemma \ref{tradim} the set $\Lambda_{\rho_0}$ is a $(\rho_0,s, O_\eta(1))$-set.

		Second, we claim that for any $\rho_0$-arc $\tau\in\Lambda_{\rho_0}$, $\cT^{\rho_0}_{\tau}:=\{T_{\rho_0}\in \cT^{\rho_0}: \text{dir}(T_{\rho_0}) =\tau\}$ is a $(\rho_0, 1-s,O_\eta(1))$-set.
		Indeed,  since $\cT$ is $\eta$ uniform, the number $N=\{T\in\cT:\;T\subset T_{\rho_0}\}$ is  roughly the same (modulo a multiplicative factor of $O_\eta(1)$) for each $T_{\rho_0}\in \cT^{\rho_0}$. There are
		$\sim \#\{\theta\subset \tau\}\delta^{s-1}$
		tubes $T\in\cT$ with direction  $\theta\subset\tau$.
		Each of these $T$ lies inside some $T_{\rho_0}\in \cT_\tau^{\rho_0}$.
		Thus, there are $\sim \frac1N \#\{\theta\subset \tau\}\delta^{s-1}$ fat tubes in $\cT_\tau^{\rho_0}$.

		Similarly, for each $\delta\le \Delta\le 1$ there are $\lesssim \frac1N \#\{\theta\subset \tau\}(\delta/\Delta)^{s-1}$ fat tubes $T_{\rho_0}\in \cT_\tau^{\rho_0}$ inside each $\Delta\times 1$-box in the direction $\tau$. Combining these it follows that  $\cT_{\tau}^{\rho_0}$ is a $(\rho_0, 1-s,O_\eta(1))$-set.
		
		Third, since $\Lambda$ is uniform,  the number $\frac1N \#\{\theta\subset \tau\}\delta^{s-1}$ is roughly the same for each $\tau\in\Lambda_{\rho_0}$.
	\end{proof}

	\section{Incidence results via Furstenberg set estimates}
	\label{sec5}
	In many of the forthcoming arguments we will consider collections  of tubes $\cT$, squares $\cP$, and subsets $\cT_p\subset\cT$ with the property that $p\cap T\not=\emptyset$ for each $T\in\cT_p$. Typically, $\cT_p$ may not contain all $T\in\cT$ intersecting $p$. The incidences between $\cP$ and $\cT$ (relative to the family of sets $\cT_p$) are defined as
	$$\cI(\cP,\cT)=\{(p,T)\in\cP\times\cT:\;T\in\cT_p\}.$$
	We will often use double counting
	$$|\cI(\cP,\cT)|=\sum_{p\in\cP}|\cT_p|=\sum_{T\in\cT}|\{p:\;T\in\cT_p\}|.$$

	The key concept used in this section is that of {\em two-ends}. This is a rather weak form of non-concentration. Here is how it plays into our argument. The main result in this section, Theorem \ref{thm: delta-s}, will use the  estimate \eqref{irufurfur9fu9}. The minimum there contains three terms. Working with two-ends tubes will narrow it down to two terms. Thus, Theorem \ref{thm: delta-s} will deliver a dichotomy, rather than a trichotomy.	
	\begin{defn}[Two-ends]
		\label{def: two-ends}
		Let $T\in\cD_{\delta}^{dual}$ and let $\cP_T\subset \cD_\delta$ be a nonempty collection of $\delta$-squares intersecting $T$.
		We say that $T$ is $\epsilon$-two-ends (with respect to $\cP_T$) if for each $\rho\in (\delta, 1)$ and each ball $B_{\rho}$,
		\[
		| \cP_T\cap B_{\rho}|_{\delta} \lesssim \rho^{\epsilon}\delta^{-5\epsilon^3} | \cP_T|_{\delta}.
		\]
		
	\end{defn}

	Note that being $\epsilon$-two-ends implies that no $\delta^{\epsilon}$-segment of $T$ contains the whole $\cP_T$. It also implies that $|\cP_T|\gtrsim \delta^{-\epsilon}$. The choice of the exponent $5\epsilon^3$ is finely tuned for the application in Section \ref{sec:new}.

	The two-ends condition will be used in our proof that  Theorem \ref{thm: delta-s2}$\implies$Theorem \ref{thm: delta-s}, via the following simple observation. 	
	\begin{lemma}
		\label{covnumb}	
		Let $\cP$ be a collection of $\delta$-squares.
		Assume $\cT_p$ is a $(\delta,s)$-set of tubes intersecting a fixed $\delta$-square $p$.
		Let $\cP_T=T\cap \cP$, the squares intersecting $T$. Assume there is $\cT_p'\subset \cT_p$ with  $|\cT_p'|\gtrsim \delta^{O(\epsilon)}|\cT_p|$ such that each $T\in \cT_p'$ is  $\epsilon$-two-ends.

		Then for each $\rho\gtrsim  \delta$ we have $$|\cP|_\rho\gtrsim \rho^{-s}\delta^{O(\epsilon)}.$$
	\end{lemma}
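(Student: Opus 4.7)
The plan is to exploit the two-ends condition to force each $T \in \cT_p'$ to contribute many distinct $\rho$-cells of $\cP_\rho$ that lie far from $p$, and then to use the angular non-concentration of $\cT_p$ to produce an angularly separated sub-family of tubes whose contributions are pairwise disjoint. When $\rho \ge \delta^{c\epsilon}$ for a small absolute $c>0$, the inequality $|\cP|_\rho \gtrsim \rho^{-s}\delta^{O(\epsilon)}$ is trivial, since the right-hand side is $\lesssim 1$ while $|\cP|_\rho \ge 1$; so I may assume $\rho \le \delta^{10\epsilon^2}$.

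I would then fix the key scales $r_* := \delta^{10\epsilon^2}$ and $\alpha := C\rho/r_*$ for a large absolute constant $C$; note that $\rho \le \alpha \le 1$. Applying Definition \ref{def: two-ends} at scale $r_*$ gives
\[
|\cP_T \cap B_{r_*}(p)|_\delta \lesssim r_*^{\epsilon}\delta^{-5\epsilon^3}|\cP_T|_\delta = \delta^{5\epsilon^3}|\cP_T|_\delta \le \tfrac12 |\cP_T|_\delta
\]
for $\delta$ small, so more than half of $\cP_T$ lies outside $B_{r_*}(p)$. Using additionally the $\rho$-scale two-ends bound $|\cP_T \cap B_\rho|_\delta \lesssim \rho^\epsilon \delta^{-5\epsilon^3}|\cP_T|_\delta$ on each $\rho$-ball, a pigeonhole on a minimal $\rho$-cover of $\cP_T \setminus B_{r_*}(p)$ produces $|\cP_T \setminus B_{r_*}(p)|_\rho \gtrsim \rho^{-\epsilon}\delta^{O(\epsilon)}$. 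Thus each $T \in \cT_p'$ meets at least $\rho^{-\epsilon}\delta^{O(\epsilon)}$ distinct cells $Q \in \cP_\rho$ at distance $\ge r_*/2$ from $p$. In parallel, a greedy selection using $|\cT_p \cap B_\alpha|_\delta \le \alpha^s |\cT_p|_\delta$ together with $|\cT_p'| \gtrsim \delta^{O(\epsilon)}|\cT_p|_\delta$ yields a subfamily $\cT_p'' \subset \cT_p'$ of pairwise $\alpha$-angularly separated tubes with $M := |\cT_p''| \gtrsim \alpha^{-s}\delta^{O(\epsilon)}$.

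Now for two tubes in $\cT_p''$ the angular gap is $\ge \alpha$, so their centerlines are separated by $\ge \alpha d \ge \alpha r_* = C\rho$ at any distance $d \ge r_*$ from $p$; taking $C$ sufficiently large (say $C=10$) forces them to hit disjoint $\rho$-cells at such distances. Hence the families of $\rho$-cells in $\cP_\rho$ at distance $\ge r_*/2$ produced by distinct $T \in \cT_p''$ are pairwise disjoint, and
\[
|\cP|_\rho \;\ge\; M \cdot \rho^{-\epsilon}\delta^{O(\epsilon)} \;\gtrsim\; \alpha^{-s}\rho^{-\epsilon}\delta^{O(\epsilon)} \;=\; \rho^{-s}\,\delta^{10s\epsilon^2}\,\rho^{-\epsilon}\,\delta^{O(\epsilon)} \;\gtrsim\; \rho^{-s}\delta^{O(\epsilon)},
\]
using $\rho^{-\epsilon} \ge 1$ and absorbing $\delta^{10s\epsilon^2}$ into $\delta^{O(\epsilon)}$. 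The main technical obstacle is the bookkeeping around the $\delta^{-5\epsilon^3}$ loss built into Definition \ref{def: two-ends}, which is what dictates the threshold $r_* = \delta^{10\epsilon^2}$ and forces the split into the trivial large-$\rho$ range and the non-trivial range $\rho \le \delta^{10\epsilon^2}$ treated above.
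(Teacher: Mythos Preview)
Your proof is correct and follows essentially the same strategy as the paper's: select an angularly separated subfamily of $\cT_p'$, use the two-ends condition to force each selected tube to carry a square of $\cP$ far from $p$, and observe that the far-away pieces of sufficiently separated tubes land in pairwise disjoint $\rho$-cells. The paper picks separation $\delta^{-\epsilon}\rho$ and ``far'' threshold $\delta^{\epsilon}$, while you pick separation $C\rho\delta^{-10\epsilon^2}$ and threshold $r_*=\delta^{10\epsilon^2}$; these are interchangeable choices. One cosmetic wobble: your case split sentence mixes the thresholds $\delta^{c\epsilon}$ and $\delta^{10\epsilon^2}$ --- what you actually need (and use) is simply that the range $\rho\ge \delta^{10\epsilon^2}$ is trivial, since then $\rho^{-s}\le \delta^{-10s\epsilon^2}=\delta^{O(\epsilon)}$; there is no need to invoke a separate constant $c$. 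Also, the extra factor $\rho^{-\epsilon}$ you harvest from the $\rho$-scale two-ends bound per tube is not needed for the conclusion (one cell per tube already suffices, as in the paper), though of course it does no harm.
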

	\begin{proof}Note that $|\cT_p'|\gtrsim \delta^{-s+O(\epsilon)}$.
		Consider a collection of $\gtrsim \rho^{-s}\delta^{O(\epsilon)}$ directions that are $\delta^{-\epsilon}\rho$-separated, such that there is  $T\in\cT'_p$  pointing in each of these directions. For each such tube, consider its segment that is $\gtrsim \delta^\epsilon$-away from $p$.  Our hypothesis forces each segment to contain at least one square in $\cP$. The $\rho$-neighborhoods of these segments are pairwise disjoint.
	\end{proof}

	Here is our main result in this section.
	\begin{theorem}\label{thm: delta-s}	
		Let $\epsilon,\eta>0$.	
		Let $\cP_{\delta, s}$ be a set of  $\delta$-squares $p$. For each $p\in\cP_{\delta,s}$, let  $\cT_p$ be a $(\delta, s,\delta^{o_\epsilon(1)})$-set of tubes intersecting $p$, with  $|\cT_p|\sim r$.
		
		Let $\bar{\cT}_p\subset \cT_p$ and write $\cP_{T}=\{p:\;T\in\bar{\cT}_p\}$, $\bar{\cT}=\cup_{p}\bar{\cT}_p$. Assume
		\begin{equation}
			\label{ ifj iojiojio}
			T\in\bar{\cT} \implies T\text{ is } \epsilon\text{-two-ends with respect to }\cP_T,\end{equation}
		\begin{equation}
			\label{o.k}
			\sum_{p}|\bar{\cT}_p|\gtrsim \delta^{\eta}\sum_p|\cT_p|.
		\end{equation}
		Then at least one of the following  happens (these will be referred later as Item (1) and Item (2)):
		\\
		\\
		\textbf{Item 1.}
		\[
		|\cP_{\delta, s}|\lesssim   \delta^{s+o_\epsilon(1)+o_\eta(1)} \frac{|\bar{\cT}|^2}{r^2}.
		\]
		\textbf{Item 2.} There exists a scale $\Delta\geq \delta^{1-\sqrt{\epsilon}}$ and a family of $\Delta$-squares $B_{\Delta}\subset (\cP_{\delta, s})_\Delta$ such that
		these squares cover at least a $\delta^{O(\epsilon+\eta)}$-fraction  of $\cP_{\delta,s}$ and
		$$|\cP_{\delta,s} \cap B_{\Delta}|\gtrsim (\frac{\Delta}{\delta})^{2-s+\epsilon^{1/4}}.$$
	\end{theorem}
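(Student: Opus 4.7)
The plan is to use the Furstenberg set estimate \eqref{irufurfur9fu9} from Theorem \ref{Fur} as the main engine, and to exploit the two-ends condition to eliminate one branch of the trichotomy so that only the stated dichotomy remains. First, by dyadic pigeonholing on the values of $|\bar{\cT}_p|$ and by applying Lemma \ref{largeunifset}, I would pass (at the cost of an acceptable $\delta^{o_\epsilon(1)}$ factor that can be absorbed into the $o_\eta(1)$ exponents) to an $\epsilon$-uniform subcollection of $\cP_{\delta,s}$ on which $|\bar{\cT}_p|\sim r'\gtrsim \delta^{\eta+o_\epsilon(1)}r$ for all $p$. Each $\bar{\cT}_p\subset\cT_p$ inherits the $(\delta,s,\delta^{o_\epsilon(1)-\eta})$-set property from its ambient set of comparable size.

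Second, because every $T\in\bar{\cT}$ is $\epsilon$-two-ends with respect to $\cP_T$, I would apply Lemma \ref{covnumb} with $\cT_p'=\bar{\cT}_p$ to obtain the lower bound $|\cP_{\delta,s}|_\rho \gtrsim \rho^{-s}\delta^{O(\epsilon+\eta)}$ at every scale $\rho\in[\delta,1]$. Next, I would invoke the multi-scale decomposition Theorem \ref{justonegood} on $\cP_{\delta,s}$, obtaining scales $1=\delta^{A_1}>\ldots>\delta^{A_{L+1}}=\delta$ with $L=O_\epsilon(1)$ and branching rates $s-o_\epsilon(1)\leq t_1<\ldots<t_L$. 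Then, applying the primal form of Theorem \ref{Fur} scale by scale to the pair $(\cP_{\delta,s},\bar{\cT})$, with $\cT(p)=\bar{\cT}_p$ as the $(\delta,s)$-set through each $p$ and $\cP_{\delta,s}$ behaving like a $(\delta,t_l)$-set in the $l^{\text{th}}$ layer, yields per-scale bounds $|\bar{\cT}|\gtrsim \delta^{-\min(t_l,(s+t_l)/2,1)+o_\epsilon(1)+o_\eta(1)}r'$.

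The dichotomy emerges from where the minimum is attained. Since $t_l\geq s$ for every $l$ (by Step 2), the first branch of the minimum (namely $t_l$ itself, which dominates only when $t\leq s$) is ruled out; this is the quantitative content of the remark ``two-ends reduces trichotomy to dichotomy'' from the discussion preceding the theorem. If every $t_l\leq 2-s$, then $\min=(s+t_l)/2$ at every layer; combining the per-layer estimates with the telescoping in Theorem \ref{justonegood}(ii) and the double-counting identity $r'|\cP_{\delta,s}|=\sum_T|\cP_T|\lesssim |\bar{\cT}|\max_T|\cP_T|$ delivers Item 1 with the exponent $\delta^{s+o_\epsilon(1)+o_\eta(1)}$. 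Otherwise, some $t_l>2-s+\epsilon^{1/4}/2$; setting $\Delta:=\delta^{A_{l+1}}$, uniformity gives that each $\Delta$-square in a covering of $\cP_{\delta,s}$ contains approximately $(\Delta/\delta)^{t_l}\gtrsim (\Delta/\delta)^{2-s+\epsilon^{1/4}}$ many $\delta$-squares, yielding Item 2. The constraint $\Delta\geq \delta^{1-\sqrt\epsilon}$ matches the interval-width bound in Theorem \ref{justonegood}(i) once $\epsilon$ is small enough.

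The hard part will be the quantitative exponent bookkeeping. The $L$ layers each contribute $\delta^{o_\epsilon(1)}$ losses from independent applications of Furstenberg's theorem, and these must remain subpolynomial; this is ensured by the guarantee $L=O_\epsilon(1)$ (independent of $\delta$) in Theorem \ref{justonegood}. A subtler point is that Lemma \ref{covnumb} only yields a global covering-number lower bound, not a $(\delta,s)$-set condition inside each ball; upgrading this to a per-scale $(\delta,t_l)$-set structure with $t_l\geq s$ is precisely the role of Theorem \ref{justonegood}, and is essential for feeding into Furstenberg's estimate.
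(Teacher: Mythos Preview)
Your overall strategy matches the paper's: reduce via uniformization, use two-ends to get a covering-number lower bound $|\cP|_\rho\gtrsim\rho^{-s}\delta^{o_\epsilon(1)}$, apply the multi-scale decomposition (Theorem~\ref{justonegood}) to $\cP$, then run Furstenberg layer by layer and split according to whether $t_L\le 2-s$ or not. However, there is one genuine gap and one indexing slip.

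\textbf{The gap.} You obtain the covering-number bound for the \emph{original} $\cP_{\delta,s}$ via Lemma~\ref{covnumb}, but you then need it for the \emph{uniformized} subset, since that is where Theorem~\ref{justonegood} is applied. A large-cardinality subset of a spread-out set need not itself be spread out, so the bound does not transfer automatically. The two-ends hypothesis is with respect to the original $\cP_T=\{p:T\in\bar\cT_p\}$, not with respect to $T\cap\cP_{\mathrm{unif}}$, and Lemma~\ref{covnumb} as stated does not apply. The paper fixes this with an extra incidence-counting step: after uniformizing, it refines the tube family to a subset $\cT''$ whose tubes are still incident to $\gtrsim t\,\delta^{O(\epsilon)}$ squares of $\cP_{\mathrm{unif}}$ (this holds on average, hence for many tubes), and then invokes two-ends on the larger $\cP_T$ to rule out that all those $\cP_{\mathrm{unif}}$-squares lie in a single $\delta^\epsilon$-segment. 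Only then does the analogue of Lemma~\ref{covnumb} go through for $\cP_{\mathrm{unif}}$. Your sketch skips this step, and without it the branching rates $t_l\ge s-o_\epsilon(1)$ are not justified.

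\textbf{The indexing slip.} In the high-concentration branch you set $\Delta=\delta^{A_{l+1}}$, but the layer $[A_l,A_{l+1}]$ records the density of $\delta^{A_{l+1}}$-squares inside $\delta^{A_l}$-squares, not of $\delta$-squares. Since the $t_l$ are increasing, it suffices to test $t_L$; the correct choice is $\Delta=\delta^{A_L}$ (so that $A_{L+1}=1$ and you are counting genuine $\delta$-squares inside each $B_\Delta$), exactly as the paper does. The lower bound $\Delta\ge\delta^{1-\sqrt\epsilon}$ then comes from $A_{L+1}-A_L\ge\psi(\epsilon)$ with $\psi$ chosen so that $\psi(\upsilon)\ge\sqrt\epsilon$.
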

	Let us say a few words about the relevance of this result in the grand scheme of proving  Theorem \ref{thm: main}. Note that our hypothesis forces $r\lessapprox \delta^{-s}$. So if Item (1) holds then we get the upper bound $|\cT|^2/r^3$ for $|\cP_{\delta,s}|$. This matches the one in Theorem \ref{thm: main}. However, for the typical square  $p\in \cP_r(\cT)$ in Theorem \ref{thm: main}, the collection $\cT_p$ of tubes intersecting $p$ will not be a $(\delta,s, C)$-set with small $C$, unless $r\approx \delta^{-s}$. We will have to carefully manipulate the collection $\cT$ to make Theorem \ref{thm: delta-s} applicable.
	
	The application of Theorem \ref{thm: delta-s} will generate a dichotomy in our final argument based on induction on scales. If Item (1) holds, it will lead to a more direct closing of the induction. If Item (2) holds, we will use an elegant application of the {\em high-low method}.
	\smallskip

	We deduce Theorem \ref{thm: delta-s} from the following related result.

	\begin{theorem}\label{thm: delta-s2}
		Let $\cT$ be a set of $\delta$-tubes in the plane. Let $\cP$ denote an $\epsilon$-uniform set of $\delta$-squares $p$ such that for each $\rho\gtrsim \delta$ we have
		\begin{equation}
			\label{covnumblowb}
			|\cP|_\rho\gtrsim \rho^{-s}\delta^{o_\epsilon(1)}.
		\end{equation}
		Let   $\cT_p\subset \cT$ be a $(\delta, s, \delta^{o_\epsilon(1)})$-set of tubes passing through $p$ with $|\cT_p|\sim r\delta^{o_\epsilon(1)}$.
		We assume each $\cT_p$ is $\epsilon$-uniform, with the same branching function (i.e. for each $\rho=2^{-jT}$ and each  $\bT\in \cT_p^{\rho}$, $|\cT_p[\bT]|$ has a fixed value $N_\rho$, independent of $p\in\cP$).

		Then at least one of the following happens:
		\begin{enumerate}
			\item \[
			|\cP|\lesssim   \delta^{s+o_\epsilon(1)} \frac{|\cT|^2}{r^2}.
			\]
			\item there exists $\Delta\gtrsim \delta^{1-\sqrt{\epsilon}}$   such that for each $B_{\Delta}\in \cP_\Delta$
			$$|\cP \cap B_{\Delta}|\gtrsim (\frac{\Delta}{\delta})^{2-s+\epsilon^{1/4}}.$$
		\end{enumerate}

	\end{theorem}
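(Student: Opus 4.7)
The idea is to combine the Furstenberg set estimate of Ren--Wang (Theorem~\ref{Fur}) with the multi-scale decomposition of the common branching function of the $\cT_p$'s (Theorem~\ref{justonegood}), and read off the two cases of the dichotomy from whether the chaining of those Furstenberg estimates through the $L$ layers goes through or breaks at some scale.

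First, the covering condition \eqref{covnumblowb} combined with the $\epsilon$-uniformity of $\cP$ forces $\cP$ to itself be a $(\delta,s,\delta^{o_\epsilon(1)})$-set, which supplies the hypothesis on $\cP$ whenever Theorem~\ref{Fur} is invoked. I apply Theorem~\ref{justonegood} to the common branching function of the $\cT_p$'s to obtain scales $0=A_1<\ldots<A_{L+1}=1$ and dimensions $t_1<\ldots<t_L$ with $L=O_\epsilon(1)$. Item (iii) of that theorem ensures that inside each $\delta^{A_l}$-tube the $\delta^{A_{l+1}}$-tubes of $\cT_p$, after the standard rescaling $\delta^{-A_l}$, form a $(\delta^{A_{l+1}-A_l},t_l,\delta^{o_\epsilon(1)})$-set of essentially optimal cardinality.

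Next, for every intermediate scale $\Delta_l=\delta^{A_l}\ge\delta^{1-\sqrt{\epsilon}}$, I apply Theorem~\ref{Fur} at scale $\Delta_l$ to the pair $(\cP_{\Delta_l},\cT^{\Delta_l})$. Each $\Delta_l$-square $\bp\in\cP_{\Delta_l}$ carries a $(\Delta_l,t_l,\delta^{o_\epsilon(1)})$-set of $\Delta_l$-tubes through it (of size determined by the multi-scale data), and $\cP_{\Delta_l}$ is itself a $(\Delta_l,t^\cP_l,\delta^{o_\epsilon(1)})$-set for some dimension $t^\cP_l\ge s$ coming from the branching of $\cP$. Chaining the resulting scale-by-scale lower bounds on $|\cT^{\Delta_l}|$ across the $L=O_\epsilon(1)$ layers (so the accumulated error stays $o_\epsilon(1)$), together with the double counting identity $\sum_p|\cT_p|\sim |\cP|r$, should yield exactly $|\cP|\lesssim \delta^{s+o_\epsilon(1)}|\cT|^2/r^2$, which is Item~(1), whenever the chaining succeeds at every scale.

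The chaining can fail only if at some $\Delta=\Delta_{l_0}\ge\delta^{1-\sqrt{\epsilon}}$ the effective dimension $t^\cP_{l_0}$ of $\cP_{\Delta}$ falls strictly below $s-\epsilon^{1/4}$; uniformity of $\cP$ then converts this drop immediately into $|\cP|/|\cP|_\Delta\gtrsim (\Delta/\delta)^{2-s+\epsilon^{1/4}}$ for every $B_\Delta\in\cP_\Delta$, which is precisely Item~(2). The threshold $\delta^{1-\sqrt{\epsilon}}$ and the exponent $\epsilon^{1/4}$ in Item~(2) are picked precisely to dominate the cumulative $o_\epsilon(1)$-losses from the $L$ applications of Theorem~\ref{Fur} and from the multi-scale slack $\varphi(\epsilon)$ in Theorem~\ref{justonegood}.

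\textbf{Main obstacle.} The heart of the technical difficulty is the delicate bookkeeping: the Furstenberg loss $\epsilon_1(\upsilon_1,\eta_1,s,t)$, the multi-scale slack $\varphi(\epsilon)$, and all the Katz--Tao constants $\delta^{o_\epsilon(1)}$ have to fit inside the $\sqrt{\epsilon}$ and $\epsilon^{1/4}$ budgets of the conclusion. In particular, one must ensure that the $(\delta^{A_{l+1}-A_l},t_l,\delta^{o_\epsilon(1)})$-set structure provided by Theorem~\ref{justonegood}(iii) is a genuine input to Theorem~\ref{Fur} after rescaling, and that the intermediate dimensions $t^\cP_l$ along the chain really do stay above $s-\epsilon^{1/4}$ whenever Item~(2) is to be avoided, rather than merely staying above $s$ with tight room to spare.
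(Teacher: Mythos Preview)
Your outline has the right ingredients (multi-scale decomposition, Furstenberg, a dichotomy) but applies them to the wrong objects and reads the dichotomy in the wrong direction. Two concrete gaps:

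\textbf{(1) The multi-scale decomposition must be applied to $\cP$, not to $\cT_p$.} The paper applies the two-dimensional version of Theorem~\ref{justonegood} to the branching function of $\cP$, producing scales $A_l$ and dimensions $t_l$ describing how $\cP$ branches. The point is that $\cT_p$ is already a $(\delta,s,\delta^{o_\epsilon(1)})$-set, so by Lemma~\ref{tradim} its coarsenings are $(\rho,s)$-sets at every scale $\rho$; its branching carries no new information. By contrast, the branching of $\cP$ is the unknown, and the dichotomy is literally a statement about it: Item~(2) says the \emph{finest} layer of $\cP$ has local dimension $t_L\ge 2-s+\epsilon^{1/4}$. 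In the paper, Case~1 is exactly ``$t_L\ge 2-s+\epsilon^{1/4}+3\epsilon$'', and then $\Delta=\delta^{A_L}$ with $|\cP\cap B_\Delta|\gtrsim(\Delta/\delta)^{t_L-3\epsilon}$ gives Item~(2) immediately. Your proposed trigger --- ``the dimension of $\cP_\Delta$ drops below $s-\epsilon^{1/4}$'' --- never occurs, since the hypothesis \eqref{covnumblowb} forces $|\cP|_\Delta\gtrsim\Delta^{-s}\delta^{o_\epsilon(1)}$ at every $\Delta$; and even if it did, it would not imply $|\cP|/|\cP|_\Delta\gtrsim(\Delta/\delta)^{2-s+\epsilon^{1/4}}$ without an a~priori lower bound on $|\cP|$ that you do not have.

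\textbf{(2) Furstenberg is applied locally, and the chaining uses a specific product estimate you omit.} In Case~2 the paper works inside each $\bp_l\in\cP_{\delta^{A_l}}$: the squares $\cP\cap\bp_l$ form a rescaled $(\delta^{A_{l+1}-A_l},t_l)$-set, and through each $p$ the tube segments $\cU_p$ form a rescaled $(\delta^{A_{l+1}-A_l},s)$-set (via Lemma~\ref{tradim}). The covering condition \eqref{covnumblowb} gives $t_l\ge s+o_\upsilon(1)$, and the assumption $t_L\le 2-s$ gives $t_l\le 2-s+o_\upsilon(1)$; together these pin down $\min(t_l,\frac{s+t_l}{2},1)=\frac{s+t_l}{2}+o_\upsilon(1)$ in Theorem~\ref{Fur}, which rearranges to $|\cP\cap\bp_l|\lesssim\delta^{(A_{l+1}-A_l)s}(|\cT_{\bp_l}|/M_{\bp_l})^2$. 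The product of these local estimates is then controlled by the bound $\prod_l\max_{\bp_l}|\cT_{\bp_l}|/M_{\bp_l}\lessapprox|\cT|/r$, quoted from \cite[Proposition~3.8]{RW} (itself resting on \cite[Propositions~4.1, 5.2]{OS}). This product estimate is the mechanism that turns the local Furstenberg bounds into the global $|\cP|\lesssim\delta^{s}|\cT|^2/r^2$; your ``chaining the lower bounds on $|\cT^{\Delta_l}|$'' is too vague to substitute for it, and applying Furstenberg globally at each coarse scale $\Delta_l$ does not produce inequalities that telescope.
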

	For later use in the final induction on scales argument, we quantify explicitly the lower bounds for $\Delta$ and $|\cP \cap B_{\Delta}|$. The other $o_\epsilon(1)$ terms are irrelevant.

	\begin{proof}( Theorem \ref{thm: delta-s2}$\implies$Theorem \ref{thm: delta-s} )

		There are two steps.
		\\
		\\
		1. Uniformization: We reduce the theorem to proving it for $\epsilon$-uniform collections of tubes and squares.
		\\
		\\
		Write $P=|\cP_{\delta,s}|$. We call a square $p$ incident to a tube $T\in\bar{\cT}$ if $T\in\bar{\cT}_p$, or equivalently, if $p\in\cP_T$.
		Note the lower bound for incidences
		$$|\cI(\cP_{\delta,s},\bar{\cT})|\gtrsim  \delta^\eta Pr.$$
		In the following we refine first $\bar{\cT}$, then $\cP_{\delta,s}$, by essentially preserving the incidences. It will be important that the refinement of $\cP_{\delta,s}$ remains comparable in size.
		
		Select a  number $t$ and a subset $\cT'\subset\bar{\cT}$ such that each $T\in\cT'$ is incident to $\sim t$ squares in  $\cP_{\delta,s}$ (thus $|\cP_T|\sim t$) and moreover, the number of incidences is essentially preserved
		$$|\cT'|t\sim \cI(\cP_{\delta,s},\cT')\gtrsim \delta^{\eta}(\log(1/\delta))^{-1}Pr.$$
		Note that the size of $\cT'$ might be much smaller than the size of $\bar{\cT}$, but this will only help us with our final estimate.

		Each $p$ was initially incident to $\sim r$ tubes. But since we trimmed the collection of tubes, this is no longer the case. However, the following satisfactory substitute holds.
		
		Note that the  subset $\cP'\subset \cP_{\delta,s}$ of those $p$ that are incident to $\gtrsim \delta^{\eta}(\log(1/\delta))^{-1} r$ tubes in $\cT'$ satisfies $|\cP'|\gtrsim P\delta^{\eta}(\log(1/\delta))^{-1}$. This happens since $$\delta^{\eta}(\log(1/\delta))^{-1}Pr\lesssim   \cI(\cP_{\delta,s},\cT')=\cI(\cP',\cT')+\cI(\cP_{\delta,s}\setminus \cP',\cT'),$$
		$$\cI(\cP_{\delta,s}\setminus \cP',\cT')\ll\delta^{\eta}(\log(1/\delta))^{-1} Pr,$$
		and thus
		$$|\cP'|r\gtrsim  \cI(\cP',\cT')\gtrsim Pr\delta^{\eta}(\log(1/\delta))^{-1}.$$
		For each $p\in\cP'$ we write $\cT_{p}'$ for the collection of tubes in $\cT'$ that $p$ is incident to. Then $|\cT_p'|\gtrsim \delta^{\eta}(\log(1/\delta))^{-1} r$.
		
		Find a subset $\cP'' \subset \cP'$ such that $|\cP''|\gtrsim P\delta^{O(\epsilon^3)+\eta}$ and for each $p\in \cP''$, there is  (cf. Lemma \ref{largeunifset}) an $\epsilon^3$-uniform set $\cT_{p,unif}\subset \cT_p'$ with $|\cT_{p,unif}|\gtrsim \delta^{\epsilon^3} |\cT_p'|$ and  with  branching function independent of $p$.
		Here is why this is possible. Each $\cT_{p,unif}$ is associated with a string $(N_1,\ldots,N_m)$, with $N_j:=|\cT_{p,unif}|_{2^{-jT}}$ satisfying  $N_{j+1}/N_j=2^{c_j}$ for some nonnegative integer $c_j\lesssim T$.  There are $(O(T))^m\lesssim \delta^{-\epsilon^3}$ such strings. Pigeonhole to pick one that represents many $p$. This gives a large subset $\cP''$.
		
		Finally, find an $\epsilon^3$-uniform subset $\cP_{unif}\subset \cP''$ with size $\gtrsim P\delta^{O(\epsilon^3)+\eta}$. Note that
		$$\cI(\cP_{unif},\cT')\gtrsim \delta^{O(\epsilon^3)+\eta}|\cP_{unif}|r\gtrsim \delta^{O(\epsilon^3)+\eta}Pr.$$

		Note that for each $p\in\cP_{unif} $ the collection $\cT_{p,unif}$ is a $(\delta,s, \delta^{o_\epsilon(1)+o_\eta(1)})$-set with cardinality $\sim r\delta^{o_\epsilon(1)+o_\eta(1)}$.
		\\
		\\
		2. Covering number estimates. We prove that for each $\rho\gtrsim \delta$ we have
		\begin{equation}
			\label{kldcrt0y6o00uo706}
			|\cP_{unif}|_\rho\gtrsim \rho^{-s}\delta^{o_\epsilon(1)+\eta}.
		\end{equation}
		\\
		\\
		Recall that each $T\in \cT'$ is incident to  $\sim t$
		squares in $\cP_{\delta,r}$, so $$Pr\gtrsim \cI(\cP_{\delta,r},\cT')\sim |\cT'|t.$$
		Recall also that
		$$\cI(\cP_{unif},\cT')\gtrsim \delta^{O(\epsilon^3)+\eta}Pr.$$
		Thus,
		$$ \cI(\cP_{unif},\cT')\gtrsim |\cT'|t\delta^{O(\epsilon^3)+\eta}.$$
		It follows that, for $C_1$ large enough,  the number of tubes in the collection  $\cT''\subset\cT'$
		that are incident to  $\gtrsim t\delta^{C_1\epsilon^3}$ squares in $\cP_{unif}$ is $\gtrsim|\cT'|\delta^{O(\epsilon^3)+\eta}$. For later use we record that
		\begin{equation}
			\label{juryhfurfuruyu}
			\cI(\cP_{unif},\cT'')\gtrsim |\cT'|t\delta^{O(\epsilon^3)+\eta}Pr\gtrsim  \delta^{O(\epsilon^3)+2\eta}Pr.\end{equation}
		
		Note that for each $T\in \cT''$, the squares in  $\cP_{unif}$ that are incident to $T$ cannot all be contained in a $\delta^\epsilon$-segment of $T$. Indeed, $T$ being two-ends with respect to $\cP_T$ guarantees that such a segment only contains $\lesssim \delta^{\epsilon^2-5\epsilon^3}t$  squares even from $\cP_T$. And if $\epsilon$ is small enough we have $\epsilon^2-5\epsilon^3>C_1\epsilon^3$.
		
		Using \eqref{juryhfurfuruyu} we get
		$$\sum_{p\in\cP_{unif}}|\{T\in \cT'':\;p\cap T\not=\emptyset\}|\ge \cI(\cP_{unif},\cT'')\gtrsim \delta^{O(\epsilon^3)+2\eta}Pr.$$
		It follows that there is $p$ such that
		$$|\{T\in \cT'':\;p\cap T\not=\emptyset\}|\gtrsim \delta^{O(\epsilon^3)+2\eta}r.$$
		The desired \eqref{kldcrt0y6o00uo706} now follows from Lemma \ref{covnumb}.
		\\
		\\
		3. Conclusion:  We write $\cP=\cP_{unif}$.  Remark \ref{approxunifsets} implies that $\cP$ is approximate $\epsilon$-uniform, satisfies \eqref{kldcrt0y6o00uo706} and
		\begin{equation}
			\label{cooomparison}
			|\cP_{\delta,s}|\lesssim \delta^{-o_\epsilon(1)-\eta}|\cP|.
		\end{equation}
		Each $\cT_{unif, p}$ is approximate  $\epsilon$-uniform and a $(\delta,s,\delta^{o_\epsilon(1)+o_\eta(1)})$-set. The value of $r$ has decreased by at most a factor of $\delta^{o_\epsilon(1)+\eta}$.
		We now apply Theorem \ref{thm: delta-s2} to the collections $\cP$ and $\cT'$. The $\delta^{\eta}$-losses in constants will only incur $\delta^{O(\eta)}$-losses in Item (1).
		
		If Item (1) in that theorem holds, then when combined with \eqref{cooomparison} it immediately implies (1) in
		Theorem \ref{thm: delta-s}.
		
		Also, since $\cP\subset \cP_{\delta,p}$, if   Item (2) in Theorem \ref{thm: delta-s2} holds, then Item  (2) in
		Theorem \ref{thm: delta-s} must also hold.

	\end{proof}

	\begin{proof}(of Theorem \ref{thm: delta-s2} )
		Recall the function $\psi$ from Theorem \ref{justonegood}.
		We let $\upsilon=\upsilon(\epsilon)\ge \epsilon$ be such that $\psi(\upsilon)\ge \sqrt{\epsilon}$ and $\lim_{\epsilon\to0}\upsilon(\epsilon)=0.$ Each $\epsilon$-uniform set will also be (approximate) $\upsilon$-uniform, see Remark \ref{approxunifsets}. It will be useful to keep in mind that being $o_\upsilon(1)$ is equivalent with being $o_\epsilon(1)$.

		Apply (the two-dimensional version of) Theorem \ref{justonegood} to $\cP$ and $\upsilon$ (in place of $\epsilon$). Then, using \eqref{covnumblowb},  rearranging Theorem \ref{justonegood} (ii) and since $A_1=0$ we get
		$$A_2(t_1-t_2)+A_3(t_2-t_3)+\ldots+A_{l}(t_{l-1}-t_{l})+t_{l}A_{l+1}\ge sA_{l+1}+o_\upsilon(1),\;1\le l\le L.$$
		Since $t_l<t_{l+1}$ this gives
		$$t_{l}A_{l+1}\ge sA_{l+1}+c_\upsilon,$$
		with $c_\upsilon=o_\upsilon(1)$.
		
		A closer inspection of the proof of Theorem \ref{justonegood} reveals that $c_\upsilon$ is negative, and this requires some attention.
		We pick $l_0$ such that on the one hand $A_{l_0}=o_\upsilon(1)$, and on the other hand $c_\upsilon/A_{l+1}=o_\upsilon(1)$ for each $l\ge l_0$. All intervals $[A_l,A_{l+1}]$ with $l<l_0$ will be moved into the ``bad" category. Thus,  the combined total length of all the bad intervals is still $o_\upsilon(1)$. All intervals $[A_l,A_{l+1}]$ with $l\ge l_0$ will be called ``good".

		Our choice forces that for each $l\ge l_{0}$,
		\begin{equation}
			\label{ghdfgsfhjsgf}
			t_{l}\ge s+o_\upsilon(1).
		\end{equation}
		For each good interval we will get essentially sharp estimates. For the bad ones trivial estimates will suffice, since their lengths add up to a negligible amount $o_\upsilon(1)$.
		
		\vspace{5pt}
		
		Case 1. When $t_L\ge 2-s+\epsilon^{1/4}+3\epsilon$, this gives Item $(2)$ with $\Delta=\delta^{A_L}$. Indeed, $$\Delta/\delta=(1/\delta)^{A_{L+1}-A_{L}}\ge (1/\delta)^{\psi(\upsilon)}\ge \delta^{-\sqrt{\epsilon}},$$
		and, by Theorem \ref{justonegood} (iii) with $I$ replaced by $B_\Delta$,
		$$|B_\Delta\cap\cP|\gtrsim (\frac1\delta)^{(A_{L+1}-A_L)(t_L-3\epsilon)}=(\Delta/\delta)^{t_L-3\epsilon}\ge (\Delta/\delta)^{2-s+\epsilon^{1/4}}.$$
		If we are in Case 1, this is the end of the whole  argument.
		
		\vspace{5pt}
		Case 2. When
		\begin{equation}
			\label{lalaland}
			t_L\leq 2-s+\epsilon^{1/4}+3\epsilon,
		\end{equation}
		the only implication we use is that
		\begin{equation}
			\label{doi}
			t_l\leq 2-s+o_\upsilon(1)
		\end{equation}
		for each $1\le l\le L$.
		
		We write $\bp_l$ for a generic square in $\cP_{\delta^{A_l}}$. In particular, $\bp_{L+1}$ is an alternative notation for an arbitrary square  $p\in \cP$.
		\smallskip
		
		The following argument is applicable to each $l$.
		Let us explain the main step for  $[A_L,A_{L+1}]$.
		Consider $\bp_L\in \cP_{\delta^{A_L}}$. For $p\in \cP\cap\bp_L$, call $\cU_p$ the collection of $\delta\times \delta^{A_L}$-tube segments $U=T\cap \bp_L$ with $T\in\cT_p$. Let also $\cU[\bp_L]$ denote the collection of all these distinct segments for $p\in \bp_L\cap \cP$.

		By Lemma \ref{tradim}, $\cU_p$ is a (rescaled) $(\delta^{A_{L+1}-A_L},s,\delta^{-(A_{L+1}-A_L)\eta_1} )$-set, with
		$\eta_1=o_\epsilon(1)=o_\upsilon(1).$ The set $\bp_L\cap\cP$ is a rescaled $(\delta^{A_{L+1}-A_L},t_L,\delta^{-(A_{L+1}-A_L)\eta})$-set, with $\eta=o_\upsilon(1)$.

		\vspace{5pt}
		
		Using  \eqref{ghdfgsfhjsgf} and \eqref{lalaland} we find
		$$\min(\frac{s+t_L}{2}, t_L, 1)=\frac{s+t_L}{2}+o_\upsilon(1).$$To uniformize existing notation with the forthcoming one, we denote $\cU[\bp_L]$ by $\cT_{\bp_L}$ and $|\cT_p|_{\delta^{A_{L+1}-A_L}}$ by $M_{\bp_L}$.
		\smallskip
		
		By the Furstenberg set estimate Theorem \ref{Fur} applied to (the rescaled version of) $(\bp_L\cap \cP, \cT_{\bp_L})$ we get
		\[
		|\cT_{\bp_L}| \gtrsim  \delta^{-(A_{L+1}-A_L)(\frac{t_L+s+o_\upsilon(1))}{2} +\epsilon(o_\upsilon(1),o_\upsilon(1),s,t_L))}M_{\bp_L}.
		\]
		Let $\bar{\epsilon}(\eta,\eta_1,s)=\max_{s\le t\le 2-s}\epsilon(\eta,\eta_1,s,t)$. We have $\lim_{\eta,\eta_1\to 0}\bar{\epsilon}(\eta,\eta_1,s)=0$, so in particular, $\bar{\epsilon}(o_\upsilon(1),o_\upsilon(1),s)=o_\upsilon(1)$.  Given \eqref{ghdfgsfhjsgf} and \eqref{doi}, we have $\epsilon(\eta,\eta_1,s,t_l)\le \bar{\epsilon}(\eta,\eta_1,s)$   for each $l\ge l_0$.
		Recalling also that by (iii) of Theorem \ref{justonegood} we have  $|\bp_L\cap \cP|_{\delta}\sim  \delta^{-(A_{L+1}-A_L)(t_L+o_\upsilon(1))}$, we conclude that
		\[
		\#(\bp_{L+1}\subset\bp_L)=|\bp_L\cap \cP| \lesssim \delta^{(A_{L+1}-A_L)(s+o_\upsilon(1))} (\frac{|\cT_{\bp_L}|}{ M_{\bp_L}})^2.
		\]

		We repeat this argument for the next scale.
		We use \cite[Proposition 4.1]{OS}, to cover the $\delta$-tubes through each $\bp_L\subset \bp_{L-1}$ with $M_{\bp_{L-1}}$ many $\delta^{A_L}\times \delta^{A_{L-1}}$-tube segments lying inside $\bp_{L-1}$. We write $\cT_{\bp_{L-1}}$ for the collection of all these tube segments.
		Applying the argument from the first step to $\bp_{L-1}\in \cP_{\delta^{A_{L-1}}}$ and $\cT_{\bp_{L-1}}$, we are led to
		\[
		\#(\bp_{L}\subset\bp_{L-1}) \lesssim \delta^{(A_L-A_{L-1})(s+o_\upsilon(1))}(\frac{|\cT_{\bp_{L-1}}|}{ M_{\bp_{L-1}}})^2 .
		\]
		We repeat this step, with the last inequality in the iteration being (recall that $\bp_1=[0,1]^2$)
		\[
		\#(\bp_{2}\subset\bp_{1}) \lesssim \delta^{(A_2-A_{1})(s+o_\upsilon(1))}(\frac{|\cT_{\bp_{1}}|}{ M_{\bp_{1}}})^2 .
		\]
		Proposition 5.2 \cite{OS} explains how to efficiently use Proposition 4.1 from the same paper, in a way that the product of these estimates is kept under control. The following is recorded in Proposition 3.8 of \cite{RW}
		$$\prod_{l=1}^{L}\max_{\bp_l}\frac{|\cT_{\bp_{l}}|}{ M_{\bp_{l}}} \lessapprox\frac{|\cT|}{r},$$
		with $\lessapprox$ hiding multiple logarithmic losses.

		We have described what happens if all intervals are good. For the bad intervals, a trivial inequality will hold, with the bound $\delta^{(A_l-A_{l-1})s} $ replaced by the crude estimate $\delta^{-O(A_l-A_{l-1})} $,
		\[
		\#(\bp_{l}\subset\bp_{l-1}) \lesssim \delta^{-O(A_l-A_{l-1})}(\frac{|\cT_{\bp_{l-1}}|}{ M_{\bp_{l-1}}})^2 .
		\]
		The contribution from the bad intervals is thus $\delta^{o_\upsilon(1)}$.
		Combining all these inequalities we get
		\[
		|\cP|=\Pi_{l=2}^{L+1} \#(\bp_{l}\subset\bp_{l-1}) \lessapprox \delta^{s+o_\upsilon(1)} \frac{|\cT|^2}{r^2}.
		\]
		
		\vspace{ 5pt}
		
		In addition to factors of the form  $\delta^{(A_{l+1}-A_l)o_\upsilon(1)}$ incurred at each step $l$, that contribute a total  $\delta^{o_\upsilon(1)}$-loss, there are implicit constants hidden in the notation $\lesssim$,  that are $O(1)$. Recall that $L$ is independent of $\delta$, and only depends on $\upsilon$, so this contribution is $O(1)^{L}=O_\upsilon(1)=O_\epsilon(1)$. Finally, recall that $o_\upsilon(1)=o_\epsilon(1)$.
		
	\end{proof}

	\section{The high-low method}
	\label{sec6}
	The following result traces its origins to \cite{GSW}. We will use it in the next section  to perform induction on scales, in the case when Item (2) in Theorem \ref{thm: delta-s} holds. 	
	
	\begin{lemma}[The high-low method]\label{lem: high-low}
		Let $\beta>0$.
		Let $\cT$ be a collection of $\delta$-tubes and let $r_0\ge 1$. Suppose $|\cP_{r_0}(\cT)|\ge \delta^{-2-\beta}/r_0$. Then there exists a scale $1\ge \tilde{\delta}\ge \delta^{1-\frac\beta{2}}$ such that for at least half of the $\delta$-squares $q$ (with center $c_q$) in  $\cP_{r_0}(\cT)$,
		the number of tubes in $\cT$ that intersect the ball $ B(c_q,\tilde{\delta}\delta^{-\upsilon})$ is $\gtrsim_\upsilon r_0\tilde{\delta}/\delta$, for each $\upsilon>0$.
	\end{lemma}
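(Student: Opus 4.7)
The proof is the Fourier high-low method of \cite{GSW}, adapted to tubes. Let $f := \sum_{T \in \cT} \mathbf{1}_T$; on each $q \in \cP_{r_0}(\cT)$, $f \sim r_0$. For a scale $\rho \in [\delta, 1]$ and an $L^1$-normalized Schwartz bump $\eta_\rho$ at scale $\rho$, a short computation gives $(f * \eta_\rho)(x) \sim (\delta/\rho) N_\rho(x)$ with $N_\rho(x) = \#\{T \in \cT : T \cap B(x, \rho) \ne \emptyset\}$, since each tube meeting $B(x,\rho)$ contributes area $\sim \delta\rho$ to the convolution. The conclusion of the lemma is therefore equivalent to finding a scale $\tilde{\delta} \ge \delta^{1-\beta/2}$ such that $(f * \eta_{\tilde{\delta}\delta^{-\upsilon}})(c_q) \gtrsim_\upsilon r_0 \delta^\upsilon$ on at least half of $\cP_{r_0}(\cT)$.

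The plan is to argue by contradiction. If the claim fails for every dyadic $\tilde{\delta}$ in the target range, then for each such $\tilde{\delta}$ on more than half of $\cP_{r_0}(\cT)$ the high part $h_{\tilde{\delta}} := f - f * \eta_{\tilde{\delta}\delta^{-\upsilon}}$ is $\gtrsim r_0$ in absolute value (since $f \sim r_0$). The heart of the proof is an $L^2$ bound on $h_{\tilde{\delta}}$, obtained by Fourier orthogonality: each $\hat{\mathbf{1}}_T$ is concentrated on the $1 \times \delta^{-1}$ slab perpendicular to $T$, and the high-frequency parts of slabs for tubes with direction-separation $\ge \tilde{\delta}\delta^{-\upsilon}$ are essentially disjoint. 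This yields a bound of the form
\[
\|h_{\tilde{\delta}}\|_{L^2(\R^2)}^2 \lesssim \delta^{-O(\upsilon)} \, \delta \, |\cT|,
\]
where the $\delta^{-\upsilon}$ slack in the ball radius is exactly what absorbs the Schwartz tails of the soft Fourier cutoff and makes the slab-orthogonality argument rigorous.

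Chebyshev's inequality then bounds the count of bad squares by $\|h_{\tilde{\delta}}\|_{L^2}^2/(r_0^2 \delta^2) \lesssim \delta^{-O(\upsilon)} |\cT|/(r_0^2 \delta)$. Combining the trivial $|\cT| \le \delta^{-2}$ with the hypothesis $|\cP_{r_0}(\cT)| \ge \delta^{-2-\beta}/r_0$, this count remains strictly less than $|\cP_{r_0}(\cT)|/2$ for some $\tilde{\delta}$ in $[\delta^{1-\beta/2}, 1]$, once $\upsilon$ is taken small enough relative to $\beta$. The threshold $\tilde{\delta} \ge \delta^{1-\beta/2}$ is precisely the balance between the $L^2$ loss at scale $\tilde{\delta}$ and the strength of the abundance hypothesis: for smaller $\tilde{\delta}$ the Chebyshev comparison no longer closes.

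The main technical obstacle is the $L^2$ estimate on $h_{\tilde{\delta}}$, and in particular its interaction with tubes having nearly parallel directions. Fourier orthogonality between tube slabs is only approximate---slabs carry Schwartz tails rather than hard cutoffs---and tubes whose directions fall within angular distance $\tilde{\delta}\delta^{-\upsilon}$ contribute coherently, so one must group such tubes into directional clusters and control intra-cluster interactions directly via geometric estimates of $|T \cap T'|$. Tracking the Schwartz tails with acceptable losses, which is what forces the $\delta^{-\upsilon}$ slack in the ball radius of the statement, is the delicate but standard part of the argument.
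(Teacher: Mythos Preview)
Your overall strategy---Fourier high-low decomposition, $L^2$ bound on the high part, Chebyshev---matches the paper's. But the argument as written has a genuine gap in the Chebyshev step.

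You bound the number of bad squares by $\delta^{-O(\upsilon)}|\cT|/(r_0^2\delta)$ and then invoke the ``trivial $|\cT|\le \delta^{-2}$.'' Plugging this in gives a bad-square count of order $\delta^{-3-O(\upsilon)}/r_0^2$, and you need this to be smaller than $|\cP_{r_0}(\cT)|/2\ge \delta^{-2-\beta}/(2r_0)$. That comparison forces $r_0\gtrsim \delta^{-1+\beta}$, which is nowhere in the hypotheses (take $r_0=1$). So the argument does not close. The missing ingredient is the paper's opening move: if $|\cT|\gtrsim \delta^{-1}r_0$, then every $\delta$-square lies in the unit ball together with all of $\cT$, so the conclusion holds trivially with $\tilde{\delta}=1$. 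Only after disposing of this case may one assume $|\cT|\ll \delta^{-1}r_0$, and it is \emph{this} bound---not $|\cT|\le\delta^{-2}$---that makes the Chebyshev comparison go through.

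There is also a problem with your displayed $L^2$ estimate $\|h_{\tilde{\delta}}\|_{L^2}^2\lesssim \delta^{-O(\upsilon)}\delta|\cT|$: the right-hand side has no $\tilde{\delta}$-dependence, which is both inconsistent with your own remark about ``the $L^2$ loss at scale $\tilde{\delta}$'' and not what the orthogonality argument actually gives. The paper groups tubes into $\tilde{\delta}\times 1$ rectangles $R$ (spatial \emph{and} directional localization), uses $M_R\lesssim(\tilde{\delta}/\delta)^2$ for distinct $\delta$-tubes, and obtains $\|f^h\|_2^2\lesssim (\tilde{\delta}/\delta)^2\,\delta\,|\cT|=\delta^{1-\beta}|\cT|$ at the specific scale $\tilde{\delta}=\delta^{1-\beta/2}$. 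A more careful intra-cluster estimate via $\sum_{T'}|T\cap T'|$ gives $\|h_{\tilde{\delta}}\|_2^2\lesssim \tilde{\delta}\,|\cT|$, still with a $\tilde{\delta}/\delta$ loss over your claim. Either way, the $\tilde{\delta}$-dependent loss is exactly what is balanced against the $\delta^{-\beta}$ gain from the hypothesis, and your scale-free bound $\delta|\cT|$ cannot be correct in general.
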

	\begin{proof}Our argument follows \cite{GSW}. We fix a  positive Schwartz function $\phi:\R^2\to[0,\infty)$ with $\phi\ge 1_{B(0,1)}$ and with Fourier transform supported inside $B(0,1)$.
		Let $f=\sum_{T\in \cT} \phi_T$, where $\phi_T$ is the affine rescaling of $\phi$ that satisfies $\phi_T\ge 1_T$ and has Fourier support inside the  $1\times \delta^{-1}$ rectangle centered at the origin,  dual to $T$.

		If $|\cT|\gtrsim \delta^{-1}r_0$, we are done by taking $\tilde{\delta}=1$.  So we may assume that  $|\cT|\ll \delta^{-1}r_0$. Assume $\psi$ is smooth and $ 1_{B(0,\delta^{-1+\beta/2})}\le \psi\le 1_{B(0,2\delta^{-1+\beta/2})}$.
		Decompose $f=f^h+f^l$ into the high frequency part $f^h$ and low frequency part $f^l$ where $f^l:= (\widehat{f}\psi )^{\vee}$.
		For each $x$,  at least one of the inequalities $|f(x)|\le 2|f^h(x)|$ and  $|f(x)|\le 2|f^l(x)|$ will hold.
		\smallskip
		
		Assume the former holds for a subset of $\cP_{r_0}(\cT)$ with at least half its measure. We show below that this forces that
		\begin{equation}
			\label{imp}
			r_0^2\delta^2|\cP_{r_0}(\cT)| \lesssim  \int |f^h|^2 \lesssim \delta^{1-\beta}  |\cT| .
		\end{equation}
		However,  this cannot hold because $|\cP_{r_0}(\cT)|\geq \delta^{-2-\beta}/r_0$ and $|\cT|\ll \delta^{-1}r_0$.
		
		To see the second inequality in \eqref{imp}, partition $\mathbb{S}^1$ into $\delta^{1-\frac\beta2}$-arcs. For each such arc, consider a tiling of the plane with thin $ \delta^{1-\frac{\beta}{2}}\times 1$ rectangles $R$, with the long side pointing in the direction normal to the arc.
		
		Group the tubes in $\cT$ according to which $R$ they belong to. Write
		$$\phi_T^l=(\widehat{\phi_T} \psi)^{\vee}$$
		and
		$f_R^h=\sum_{T\subset R}(\phi_{T}-\phi_T^l)=f_R-f_{R}^l$. It is easy to see that the functions $f_R^h$ form an   almost orthogonal family, due to the combination of spatial and frequency localization.
		
		Assume  $R$ contains $M_R$ tubes. Note that $M_R\lesssim \delta^{-\beta}$.
		Each $f_R^l$ is essentially supported in (a slight enlargement of) $R$ and
		$$\|f_R^l\|_{\infty}\lesssim M_R\delta^{\beta/2}.$$
		Thus
		$$\|f_R^l\|_2^2\lesssim M_R^2\delta^\beta|R|\sim M_R^2\delta^{1+\frac{\beta}{2}},$$
		and
		$$\sum_R\|f_R^l\|_2^2\lesssim\delta^{1+\frac\beta2}\max_{R} M_R\sum_RM_R\lesssim \delta^{1-\frac\beta2}|\cT|.$$
		Also,
		$$\sum_R\|\sum_{T\subset R}\phi_T\|_2^2\lesssim \sum_RM_R\|\sum_{T\subset R}\phi_T\|_1\lesssim \delta^{1-\beta}|\cT|.$$
		Combining these two inequalities with almost orthogonality we find
		$$\|f^h\|_2^2\lesssim \sum_R\|f_R^h\|_2^2\lesssim\sum_R\|f_R^l\|_2^2+\sum_R\|f_R\|_2^2 \lesssim \delta^{1-\beta}|\cT|.$$

		We have learned that $f(x)\le 2|f^l(x)|$ for $x$ in a subset of $\cP_{r_0}(\cT)$ with at least half its size. It follows that for at least half of the squares $q$ we have
		\[
		r_0 \lesssim \delta/\tilde{\delta}  \cdot \#\{ T\in \cT: T\cap B(c_q, \tilde{\delta}\delta^{-\upsilon})\neq \emptyset\}
		\]
		for $\tilde{\delta}=\delta^{1-\frac\beta2}$. This is because $\|\phi_T^l\|_\infty\lesssim \delta/\tilde{\delta}$, and $\phi_T^l$ is negligible outside the $\delta^{-\frac{\beta}2-\upsilon}$-thickening of $T$.
		
	\end{proof}

	\section{Two-ends reduction}
	\label{sec:new}
	We start with some notation. Consider a collection $\cT_p$ of $\delta$-tubes intersecting some $p\in\cD_\delta$. Given a scale $\delta\le \rho\le 1$, we let $\bp_\rho$ be the dyadic square with side-length $\rho$ containing $p$. We introduce the collection of $\delta\times \rho$-segments
	$$\cU_{\rho}(\cT_p)=\{U_\rho= T\cap\bp_\rho,\text{ for some }T\in\cT_p \}.$$
	We will use that
	\begin{equation}
		\label{ljkvkjcvkjkvjg}
		|\cU_{\rho}(\cT_p)|=|\cT_p^{\delta/\rho}|=|\cT_p|_{\delta/\rho}.
	\end{equation}
	Assume $\cT_p$ is uniform and assume $\tilde{\cT}_p\subset \cT_p$ satisfies $|\tilde{\cT}_p|\ge C^{-1}|\cT_p|$. Using \eqref{ljkvkjcvkjkvjg} it is immediate that
	\begin{equation}
		\label{iodiruiortug[igitop]}
		|\cU_{\rho}(\tilde{\cT}_p)|\gtrsim C^{-1}|\cU_{\rho}(\cT_p)|.
	\end{equation}
	The implicit constant in the inequality $\gtrsim$ depends on the uniformity constants.
	\smallskip
	
	The following theorem shows how to find subcollections of tube segments that are two-ends, see \eqref{jeciu4u5ut9i5t0o50to60-y9} and  preserve incidences, see \eqref{jeciu4u5ut9i5t0o50to60-y9hhytj7}. Moreover, \eqref{frkfreopgiprtogipotihpoyi} guarantees that the average number of incidences for tube segments is at least as large as the average for tubes. The proof combines graph theory methods (e.g. Lemma 2.8 in \cite{DvGo} and double counting for various subgraphs) with the  decomposition in Lemma~\ref{lem: goodinterval2}, that will help us identify the scale $\tilde{\rho}$. The reader eager to get to the main argument in Section \ref{sec7} may skip this proof.
	\begin{theorem}
		\label{twoendreduc}	
		Fix $\epsilon>0$ small enough.
		Consider a collection $\cP\subset \cD_\delta$. For each $p\in\cP$ we let $\cT_p$ be an $\epsilon$-uniform set of $\delta$-tubes intersecting $p$. We assume all $\cT_p$ have the same branching function.  Assume
		\begin{equation}
			\label{wef[porpogioigporti]}
			\sum_{p\in\cP}|\cT_p|\ge \delta^{-2\epsilon}|\bigcup_{p\in\cP}\cT_p|.
		\end{equation}
		Then there is a scale
		
		\begin{equation}
			\label{jirjfrueuegutoguortuig}
			\delta^{1-\epsilon}\le {\tilde{\rho}}\le 1
		\end{equation}
		such that the following holds. Call $\cU_{{\tilde{\rho}},p}=\cU_{\tilde{\rho}}(\cT_p)$ and  $\cU_{\tilde{\rho}}=\cup_{p\in\cP}\cU_{{\tilde{\rho}},p}$. For  each $p\in\cP$ there is $\bar{\cU}_{\tilde{\rho},p}\subset \cU_{{\tilde{\rho}},p}$ with the following properties. For each $U_{\tilde{\rho}}\in\bar{\cU}_{\tilde{\rho}}:=\cup_{p\in\cP}\bar{\cU}_{{\tilde{\rho}},p}$, letting
		$$\cP_{U_{\tilde{\rho}}}=\{p\in\cP:\;U_{\tilde{\rho}}\in \bar{\cU}_{{\tilde{\rho}},p}\},$$
		we have
		\begin{equation}
			\label{jeciu4u5ut9i5t0o50to60-y9}
			|\cP_{U_{\tilde{\rho}}}\cap B_{{\tilde{\rho}}'}|\lesssim (\frac{{\tilde{\rho}}'}{\tilde{\rho}})^{\epsilon}(\frac{\tilde{\rho}}\delta)^{5\epsilon^3}|\cP_{U_{\tilde{\rho}}}|,\;\;\forall\;{\tilde{\rho}}'\ge \delta,
		\end{equation}
		\begin{equation}
			\label{jeciu4u5ut9i5t0o50to60-y9hhytj7}
			\sum_{p\in\cP}|\bar{\cU}_{{\tilde{\rho}},p}|\gtrsim  (\delta/{\tilde{\rho}})^{\epsilon^3}\sum_{p\in\cP}|\cU_{{\tilde{\rho}},p}|,
		\end{equation}
		\begin{equation}
			\label{frkfreopgiprtogipotihpoyi}
			\delta^{o_\epsilon(1)}\frac{\sum_{p\in\cP}|\cT_p|}{|\bigcup_{p\in\cP}\cT_p|}\lesssim \frac{\sum_{p\in\cP}|{\cU}_{\tilde{\rho},p}|}{|\bar{\cU}_{\tilde{\rho}}|}.\end{equation}
	\end{theorem}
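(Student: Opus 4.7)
The plan is to find $\tilde\rho$ via the multi-scale decomposition of Lemma \ref{lem: goodinterval2} applied to the common branching function of the $\cT_p$'s, and then construct the subcollections $\bar\cU_{\tilde\rho,p}$ by the standard iterated two-ends pruning, calibrating the parameters so that all three conclusions hold.

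For the choice of $\tilde\rho$, I would apply Lemma \ref{lem: goodinterval2} (with $\epsilon^3$ in place of $\epsilon$) to the $1$-Lipschitz non-decreasing branching function $\beta(A) = \log_{1/\delta}|\cT_{p_0}|_{\delta^A}$ on $[0,1]$, which by the uniform-branching hypothesis is independent of $p_0\in\cP$. This yields a partition $0 = A_1 < A_2 < \cdots < A_{L+1} = 1$ and slopes $0 \le t_1 < \cdots < t_L \le 1$ with $A_{l+1} - A_l \ge \epsilon_0\epsilon^{-1}$, where $\epsilon_0 = \epsilon^{2\epsilon^{-1}}$. I then choose $\tilde A \in [0, 1-\epsilon]$ so that $\tilde\rho := \delta^{\tilde A}$ satisfies \eqref{jirjfrueuegutoguortuig} and so that $1-\tilde A$ lies in a single good interval $[A_l, A_{l+1}]$ of $\beta$ (always possible since the intervals partition $[0,1]$ with length $\geq \epsilon_0\epsilon^{-1}$ each). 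By \eqref{ljkvkjcvkjkvjg}, the slope-control from \eqref{eq: good'} and \eqref{eq: good2''} on this interval gives a tight linear-in-$A$ bound on $\log_{1/\delta}|\cU_{\delta^A,p}|$; combined with the trivial bound $|\cU_{\tilde\rho}| \le |\cP|\cdot|\cU_{\tilde\rho,p_0}|$ (from uniform branching), this shows that the average multiplicity $\sum_p|\cU_{\tilde\rho,p}|/|\cU_{\tilde\rho}|$ is comparable, up to $\delta^{O(\epsilon)}$, to the scale-$1$ multiplicity $\sum_p|\cT_p|/|\cT|$. This is the main input for \eqref{frkfreopgiprtogipotihpoyi}.

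With $\tilde\rho$ fixed, I would build $\bar\cU_{\tilde\rho,p}$ via the standard iterated two-ends pruning: for each $U_{\tilde\rho}\in\cU_{\tilde\rho}$, iterate over the $O(\log(1/\delta))$ dyadic scales $\tilde\rho'\in[\delta,\tilde\rho]$, and at each step remove from $\cP_{U_{\tilde\rho}}$ any points sitting in a ball $B_{\tilde\rho'}$ whose mass exceeds the two-ends threshold $(\tilde\rho'/\tilde\rho)^\epsilon(\tilde\rho/\delta)^{5\epsilon^3}|\cP_{U_{\tilde\rho}}|$. The procedure terminates with a two-ends survivor set $\cP^{\rm surv}_{U_{\tilde\rho}}$ satisfying \eqref{jeciu4u5ut9i5t0o50to60-y9} by construction. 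Standard dyadic bookkeeping across the $O(\log(1/\delta))$ scales shows that the total incidence loss is bounded by a factor $(\tilde\rho/\delta)^{\epsilon^3}$, so setting $\bar\cU_{\tilde\rho,p} := \{U_{\tilde\rho}\in\cU_{\tilde\rho,p} : p\in \cP^{\rm surv}_{U_{\tilde\rho}}\}$ yields \eqref{jeciu4u5ut9i5t0o50to60-y9hhytj7}, and combining with the multiplicity comparison from the previous paragraph delivers \eqref{frkfreopgiprtogipotihpoyi}.

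The main obstacle is the simultaneous calibration of all the tiny exponents so that the two-ends pruning loss $(\tilde\rho/\delta)^{\epsilon^3}$, the multiplicity-comparison loss $\delta^{O(\epsilon)}$, and the slope control all fit inside the $\delta^{-2\epsilon}$ hypothesis gap from \eqref{wef[porpogioigporti]}. The novelty of Lemma \ref{lem: goodinterval2} — its elimination of ``bad'' intervals — is essential here: using the cruder Lemma \ref{Lipdec} would allow a bad interval near the scale $\tilde\rho$, introducing an uncontrolled $\delta^{-o_\epsilon(1)}$ jump in $\beta$ that would exceed the hypothesis budget and destroy \eqref{frkfreopgiprtogipotihpoyi}. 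The explicit tiny value $\epsilon_0 = \epsilon^{2\epsilon^{-1}}$ from \eqref{riogitgoitophiophi} is precisely what ensures that each good interval has enough length to accommodate the scale $\tilde\rho \ge \delta^{1-\epsilon}$ while leaving room for the pigeonhole and the pruning.
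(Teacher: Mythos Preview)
Your proposal has a genuine gap: you apply Lemma~\ref{lem: goodinterval2} to the wrong branching function. You decompose the branching function of $\cT_p$ (tubes through a point), but the paper decomposes the branching function of $Y'(T)$ --- the $\epsilon^4$-uniformized set of \emph{squares incident to a fixed tube} $T$, obtained after a preliminary graph-refinement step (Dvir--Gopi Lemma~2.8). The scale $\tilde\rho=\delta^b$ is then chosen as the first point where the slope of the branching function of $Y'(T)$ reaches $\epsilon$. This choice is what makes everything work: on $[b,1]$ the slope is $\ge\epsilon$, so the squares on each tube segment automatically spread out with exponent $\epsilon$ at all sub-scales --- this \emph{is} the two-ends property \eqref{jeciu4u5ut9i5t0o50to60-y9}, with no pruning required. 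On $[0,b]$ the slope is $<\epsilon$, so $|Y'(T)|_{\tilde\rho}\le (1/\tilde\rho)^{\epsilon+\epsilon^4}$; this upper bound, combined with a double-counting of pairs $(T,U_{\tilde\rho})$, yields the crucial upper bound on $|\bar\cU_{\tilde\rho}|$ that drives \eqref{frkfreopgiprtogipotihpoyi}. The branching function of $\cT_p$ controls neither of these quantities.

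Your ``standard iterated two-ends pruning'' cannot rescue the argument. Such pruning \emph{produces} a scale at which a set becomes two-ends; it does not force two-ends with a prescribed exponent $\epsilon$ at a scale $\tilde\rho$ that has already been fixed by other considerations. If $\cP_{U_{\tilde\rho}}$ happens to be concentrated (which nothing in your choice of $\tilde\rho$ prevents), pruning to threshold $(\tilde\rho'/\tilde\rho)^\epsilon$ can remove essentially everything, destroying \eqref{jeciu4u5ut9i5t0o50to60-y9hhytj7}. Likewise, your route to \eqref{frkfreopgiprtogipotihpoyi} via the trivial bound $|\cU_{\tilde\rho}|\le |\cP|\cdot|\cU_{\tilde\rho,p_0}|$ only gives $\sum_p|\cU_{\tilde\rho,p}|/|\cU_{\tilde\rho}|\ge 1$, whereas what is needed is that this ratio is at least $\delta^{O(\epsilon)}$ times the scale-$\delta$ ratio $\sum_p|\cT_p|/|\cT|\ge\delta^{-2\epsilon}$ --- a genuinely non-trivial lower bound that requires the smallness of $|Y'(T)|_{\tilde\rho}$.
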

	\begin{proof}
		Step 1. Uniformization.
		
		Call $\cT=\cup_{p\in\cP}\cT_p$.
		For each $T\in \cT$, define $Y(T):=\{ p\in \cP: T\in \cT_p\}$. Find an $\epsilon^4$-uniform subset $Y'(T)\subset Y(T)$ such that $|Y'(T)|\gtrsim \delta^{\epsilon^4}|Y(T)|$. Let $\cT'\subset \cT$ be such that
		\\
		\\
		(1) for each $T\in \cT'$, $Y'(T)$ has the same branching function, in particular  the same size, which in the future will be denoted by $|Y'(T)|$.
		\\
		\\
		(2) $|\cT'|\cdot|Y'(T)|= \sum_{T\in \cT'}|Y'(T)|\gtrsim \delta^{\epsilon^4} |\cP|\cdot |\cT_p|.$
		\\
		\\
		Let $G$  be the incidence graph between $\cP$ and $\cT'$ defined as $(p,T)\in G$ if $p\in Y'(T)$. So $|G|\sim |\cT'||Y'(T)|$.
		We apply Lemma 2.8 in \cite{DvGo} to $G$ to obtain $\cP_1\subset\cP$, $\cT_1\subset\cT'$ and a subgraph $G'\subset (\cP_1\times\cT_1)\cap G$   such that
		\\
		\\
		(3) $|G'|\gtrsim |G|$
		\\
		\\
		(4) for each $p\in\cP_1$, $\cT_p'=\{ T'\in \cT_p\cap \cT_1: (p, T')\in G'\}$  satisfies $|\cT_p'|\gtrsim \frac{|G|}{|\cP|}\gtrsim \delta^{\epsilon^4} |\cT_p|$
		\\
		\\
		(5) for each $T\in\cT_1$,  $Y''(T)=\{p\in Y'(T)\cap \cP_1: (p,T)\in G'\}$ satisfies  $|Y''(T)|\gtrsim \frac{|G|}{|\cT'|} \sim |Y'(T)|.$

		It follows that
		\begin{equation}
			\label{jifjirirugiugitkgprtogiopti}
			|\cP_1|\gtrsim \delta^{\epsilon^4} |\cP|
		\end{equation}
		and
		$$
		|\cT_1|\gtrsim  |\cT'|.$$ Also, for each $(p,T)\in \cP_1\times\cT_1$ we have $T\in\cT_p'$ if and only if $p\in Y''(T)$.
		\\
		\\
		Step 2. The definition of ${\tilde{\rho}}$. Proof  of ${\tilde{\rho}}\ge \delta^{1-\epsilon}$.
		
		Consider the branching function $f$ of $Y'(T)$. Then using (2) followed by \eqref{wef[porpogioigporti]} we find
		\begin{equation}
			\label{weuioruiuruvhguithg}
			\delta^{f(0)-f(1)}=|Y'(T)|\gtrsim \frac{\delta^{\epsilon^4}|\cP||\cT_p|}{|\cT|}\ge \delta^{\epsilon^4-2\epsilon}>\delta^{-\epsilon-\epsilon^4}.
		\end{equation}

		Apply Lemma~\ref{lem: goodinterval2}  to $f$ to find a decomposition $\{[a_j, a_{j+1}]\}$ of $[0,1]$ and a sequence of $\sigma_j<\sigma_{j+1}$ such that for each  $x\in [a_j, a_{j+1}]$
		\begin{equation}\label{eq: interval1}
			f(x) \geq f(a_j)+ \sigma_j(x-a_j) - \epsilon^4 (a_{j+1}-a_j)
		\end{equation}
		and
		\begin{equation}\label{eq: interval2}
			f(a_{j+1})\leq f(a_j)+ (\sigma_j+\epsilon^4)(a_{j+1}-a_j).
		\end{equation}
		
		Let $b$ be the smallest $a_j$ such that $\sigma_j \geq \epsilon$. Such a $b$ exists because of\eqref {eq: interval2} and \eqref{weuioruiuruvhguithg}. It is possible that $b=0$, but this is harmless. We write ${\tilde{\rho}}=\delta^b$.
		
		To prove that ${\tilde{\rho}}\ge \delta^{1-\epsilon}$ we use three things. On the one hand, \eqref{eq: interval2} and the definition of $b$ imply that
		$f(b)-f(0)\le b(\epsilon+\epsilon^4).$
		On the other hand, since $f$ is 1-Lipschitz, $f(1)-f(b)\le 1-b.$ When combined with \eqref{weuioruiuruvhguithg} we get
		$$2\epsilon-\epsilon^4\le f(1)-f(0)\le 1+b(\epsilon+\epsilon^4-1),$$
		which gives $1-b\ge \frac{\epsilon-2\epsilon^4}{1-\epsilon-\epsilon^4}\ge \epsilon.$
		\\
		\\
		Step 3. The construction of $\cT_{1,p}$.

		For $p\in\cP_1$ define a maximal subset $\cT_{1,p}\subset \cT_p'$  such that for each $\bT\in \cT^{\delta/{\tilde{\rho}}}_{1,p}$ we have $\bT\cap \cT_{1, p} = \bT \cap \cT_p'$ and
		\begin{equation} \label{eq: T1p}
			|\bT\cap \cT_{1, p}|\ge \delta^{2\epsilon^4}|\cT_p|\cdot |\cT_p|_{\delta/{\tilde{\rho}}}^{-1}.
		\end{equation}Note that, due to (4)
		\begin{equation}
			\label{edirifuregoptrpogiy oh}
			|\cT_p'\setminus \cT_{1,p}|=\sum_{\bT\in{\cT_p'}^{\delta/{\tilde{\rho}}}\setminus \cT^{\delta/{\tilde{\rho}}}_{1,p}}|\bT\cap \cT'_p|\le \sum_{\bT\in\cT_p^{\delta/{\tilde{\rho}}}}\delta^{2\epsilon^4}|\cT_p|\cdot |\cT_p|_{\delta/{\tilde{\rho}}}^{-1}=\delta^{2\epsilon^4}|\cT_p|\lesssim \delta^{\epsilon^4}|\cT_p'|.\end{equation}
		\\
		\\
		Step 4. The construction of $Y_1(T)$
		
		For each  $U_{\tilde{\rho}}\in \cU_{\tilde{\rho}}$, define $\cT(U_{\tilde{\rho}})$ as the collection  of those $T\in\cT_1$  such that $U_{\tilde{\rho}}\subset T$ and  $U_{\tilde{\rho}}\cap Y''(T)\neq \emptyset$.
		
		For each $T\in\cT_1$ we prove the existence of  $Y_1(T)\subset Y''(T)$ such that for each  $U_{\tilde{\rho}}\in\cU_{\tilde{\rho}}$ with $U_{\tilde{\rho}}\subset T$ and $U_{\tilde{\rho}}\cap Y_1(T)\not=\emptyset$ we have
		$$U_{\tilde{\rho}}\cap Y_1(T)= \{ p\in U_{\tilde{\rho}}:  T\in \cT_{1, p}\},$$
		\begin{equation}
			\label{fjkrfj rji jgi oj}
			|U_{\tilde{\rho}}\cap Y_1(T)|\gtrsim\delta^{\epsilon^4}|U_{\tilde{\rho}}\cap Y'(T)|\sim  \delta^{\epsilon^4} \frac{|Y'(T)|_{\delta}}{|Y'(T)|_{{\tilde{\rho}}}},
		\end{equation}
		and defining  $\cT_1(U_{\tilde{\rho}})$ as the set of those $T\in\cT_1$ such that $U_{\tilde{\rho}}\subset T$ and $U_{\tilde{\rho}}\cap Y_1(T)\neq \emptyset$, then
		\begin{equation}\label{eq: T1U}
			|\cT_1(U_{\tilde{\rho}})|\ge \delta^{\epsilon^4} |\cT(U_{\tilde{\rho}})|.
		\end{equation}

		For each $U_{\tilde{\rho}}\subset T\in \cT_1$, define  $Y_0(U_{\tilde{\rho}}, T)=\{ p\in Y''(T)\cap U_{\tilde{\rho}}: T\in \cT_{1, p}\}$. Define $Y_0(T)=\cup_{U_{\tilde{\rho}}\subset T} Y_0(U_{\tilde{\rho}}, T)$, where the union is over all $U_{\tilde{\rho}} \subset T$ with $|Y_0(U_{\tilde{\rho}}, T)|\geq \delta^{\epsilon^4}\frac{|Y'(T)|_{\delta}}{|Y'(T)|_{{\tilde{\rho}}}}$.
		For each $U_{\tilde{\rho}}$, define $\cT_1(U_{\tilde{\rho}})$ as the set of those $T\in\cT_1$ such that $U_{\tilde{\rho}}\subset T$ and $U_{\tilde{\rho}}\cap Y_0(T)\neq \emptyset$. Define $Y_1(T) =\cup_{U_{\tilde{\rho}}\subset T} Y_0(U_{\tilde{\rho}}, T)$, where the union  is over all $U_{\tilde{\rho}}\subset T$ with $|\cT_1(U_{\tilde{\rho}})|\geq \delta^{\epsilon^4} |\cT(U_{\tilde{\rho}})|$.  Then $Y_1(T)$ and $\cT_1(U_{\tilde{\rho}})$ satisfy all the requirements.

		We prove that
		\begin{align*}
			\sum_{T\in \cT_1} |Y''(T)\setminus Y_1(T)| & \leq |\cP_1|\cdot \delta^{\epsilon^4}{|\cT_p'|} + |\cT_1|\cdot \delta^{\epsilon^4} \frac{|Y'(T)|_{\delta}}{|Y'(T)|_{{\tilde{\rho}}}}\cdot |Y'(T)|_{{\tilde{\rho}}} \\
			&+ \delta^{\epsilon^4} \sum_{U_{\tilde{\rho}}} |\cT(U_{\tilde{\rho}})| \cdot \frac{|Y'(T)|_{\delta}}{|Y'(T)|_{{\tilde{\rho}}}}.
		\end{align*}
		The three terms record the three sources of edge losses in the graph $G'$, whose size we recall to be $|G'|=\sum_{T\in\cT_1}|Y''(T)|$. The first term comes from \eqref{edirifuregoptrpogiy oh}, the second from \eqref{fjkrfj rji jgi oj}, while the third from \eqref{eq: T1U}. The verification is left to the reader.

		We next observe that each of the three terms is (see (5) for the $\sim$)
		$$
		\lessapprox \delta^{\epsilon^4} \sum_{T\in \cT_1} |Y'(T)|\sim \delta^{\epsilon^4} \sum_{T\in \cT_1} |Y''(T)|.$$
		It suffices to check this for the last term. Indeed, note that
		\[
		\sum_{U_{\tilde{\rho}}}|\cT(U_{\tilde{\rho}})|=\sum_{U_{\tilde{\rho}}}\sum_{T\in\cT_1:\atop{U_{\tilde{\rho}}\subset T\atop{U_{\tilde{\rho}}\cap Y''(T)\not=\emptyset}}}1=\sum_{T\in\cT_1}\sum_{U_{\tilde{\rho}}\subset T:\atop{U_{\tilde{\rho}}\cap Y''(T)\not=\emptyset}}1=\sum_{T\in\cT_1}|Y''(T)|_{\tilde{\rho}}\le
		\sum_{T\in \cT_1} |Y'(T)|_{\tilde{\rho}}.
		\]
		\\
		\\
		Step 5. Definition of $\bar{\cU}_{{\tilde{\rho}},p}$. Verification of \eqref{jeciu4u5ut9i5t0o50to60-y9hhytj7}.
		
		Let $\cT_{1,p}'=\{T\in\cT_1:\;p\in Y_1(T)\}$. Then Step 4  shows that
		$$\sum_{p\in\cP_1}|\cT_{1,p}'|=\sum_{T\in\cT_1}|Y_1(T)|\sim \sum_{T\in\cT_1}|Y''(T)|.$$
		Since the last term equals $|G'|\overset{(3)}{\sim} |G|\ge \sum_{p\in\cP_1}|\cT_p|$, we find that
		$$\sum_{p\in\cP_1}|\cT_{1,p}'|\gtrsim \sum_{p\in\cP_1}|\cT_{p}|.$$
		It follows that there is $\cP_1'\subset \cP_1$ with $|\cP_1'|\gtrsim |\cP_1|$ such that $|\cT_{1,p}'|\gtrsim |\cT_p|$ for each $p\in \cP_1'$. When combined with \eqref{iodiruiortug[igitop]}, this shows that for each $p\in\cP_1'$ we have $|\cU_{\tilde{\rho}}(\cT'_{1,p})|\gtrsim |\cU_{{\tilde{\rho}},p}|$ (recall our notation $\cU_{{\tilde{\rho}},p}=\cU_{\tilde{\rho}}(\cT_{\tilde{\rho}})$).
		
		We let $\bar{\cU}_{{\tilde{\rho}},p}$ equal $\cU_{{\tilde{\rho}}}(\cT_{1,p}')$ if $p\in\cP_1$ and $\bar{\cU}_{{\tilde{\rho}},p}=\emptyset$ if $p\in\cP\setminus\cP_1$. Using \eqref{jirjfrueuegutoguortuig} and  \eqref{jifjirirugiugitkgprtogiopti}, the inequality \eqref{jeciu4u5ut9i5t0o50to60-y9hhytj7} is now immediate, as $|\cU_{{\tilde{\rho}},p}|$ is constant for $p\in\cP$.
		\\
		\\
		Step 6. Verification of \eqref{jeciu4u5ut9i5t0o50to60-y9}.

		Fix a segment $U_{\tilde{\rho}}\in\bar{\cU}_{\tilde{\rho}}$, so  $\cP_{U_{\tilde{\rho}}}\not=\emptyset$. That means that there is $T\in\cT_1$ such that $U_{\tilde{\rho}}\subset T$ and $U_{\tilde{\rho}}\cap Y_1(T)\not=\emptyset$. Define $\cI(U_{\tilde{\rho}})= \cup_{T\in \cT_1(U_{\tilde{\rho}})} (Y_1(T)\cap U_{\tilde{\rho}})$. Note that $\cI(U_{\tilde{\rho}})=\cP_{U_{\tilde{\rho}}}$. Let
		\[
		X(U_{\tilde{\rho}})=\{ (p, T)\in \cI(U_{\tilde{\rho}})\times \cT_1(U_{\tilde{\rho}}): p\in Y_1(T)\}.
		\]
		Then, by \eqref{fjkrfj rji jgi oj}  $$|X(U_{\tilde{\rho}})|\ge \sum_{T\in\cT_1(U_{\tilde{\rho}})}|Y_1(T)\cap  U_{\tilde{\rho}}|\gtrsim \delta^{ \epsilon^4} \delta^{-f(1)+f(b)} |\cT_1(U_{\tilde{\rho}})|.$$ Since for each $p\in\cI(U_{\tilde{\rho}})$ the number of $T$ such that $(p, T)\in X(U_{\tilde{\rho}})$ is $\leq |\cT|_p \cdot |\cT_p|_{\delta/{\tilde{\rho}}}^{-1}$ (recall that $U_{\tilde{\rho}}\subset T$, so the upper bound comes from the non-concentration hypothesis on $\cT_p$), we have
		$$|X(U_{\tilde{\rho}})|\le |\cI(U_{\tilde{\rho}})||\cT|_p \cdot |\cT_p|_{\delta/{\tilde{\rho}}}^{-1}.$$
		Combining the upper and lower bounds we find
		\begin{equation}\label{eq: IU1}
			|\cI(U_{\tilde{\rho}})|\gtrsim \frac{ \delta^{-f(1)+f(b) + \epsilon^4} |\cT_1(U_{\tilde{\rho}})| }{ |\cT_p|\cdot |\cT_p|_{ \delta/{\tilde{\rho}}}^{-1}}.
		\end{equation}

		For any ${\tilde{\rho}}' =\delta^c <\delta^b$ and each square $B_{{\tilde{\rho}}'}$,  let
		\[
		X'(U_{\tilde{\rho}})=\{ (p, T)\in (\cI(U_{\tilde{\rho}})\cap B_{{\tilde{\rho}}'}) \times \cT(U_{\tilde{\rho}}): p\in Y''(T)\}.
		\]
		Then
		$$|X'(U_{\tilde{\rho}})|\le \sum_{T\in \cT(U_{\tilde{\rho}})}|Y''(T)\cap B_{{\tilde{\rho}}'}|\le \sum_{T\in \cT(U_{\tilde{\rho}})}|Y'(T)\cap B_{{\tilde{\rho}}'}|\lesssim \delta^{-f(1)+f(c)}\cdot |\cT(U_{\tilde{\rho}})|.$$
		Moreover,  for each $p\in \cI(U_{\tilde{\rho}})\cap B_{{\tilde{\rho}}'}$, by \eqref{eq: T1p},  the number of $T\in \cT(U_{\tilde{\rho}})$ such that $p\in Y''(T)$ is $\gtrsim \delta^{ 2\epsilon^4} |\cT_p|\cdot |\cT_p|_{\delta/{\tilde{\rho}}}^{-1}$. Therefore,
		\begin{equation}\label{eq: IUrho'}
			|\cI(U_{\tilde{\rho}})\cap B_{{\tilde{\rho}}'}| \lesssim \frac{\delta^{-f(1)+f(c) - 2\epsilon^4}\cdot |\cT(U_{\tilde{\rho}})|}{|\cT_p|\cdot |\cT_p|_{\delta/{\tilde{\rho}}}^{-1}}.
		\end{equation}
		By \eqref{eq: T1U},\eqref{eq: IU1} and \eqref{eq: IUrho'} we conclude that
		\[
		|\cI(U_{\tilde{\rho}})\cap B_{{\tilde{\rho}}'}|\lesssim \delta^{f(c)-f(b)-4\epsilon^4} |\cI(U_{\tilde{\rho}})|.
		\]
		Recall that $c>b$ and that  $\sigma_j\ge \epsilon$ on $[b,1]$. Combining this with \eqref{eq: interval1} it follows that
		$f(c) -f(b)\geq \epsilon(c-b) -\epsilon^4$. We thus have
		\begin{equation}\label{eq: two-ends2}
			|\cI(U_{\tilde{\rho}})\cap B_{{\tilde{\rho}}'}|\lesssim \delta^{\epsilon(c-b)-5\epsilon^4} |\cI(U_{\tilde{\rho}})|=\delta^{-5\epsilon^4}({\tilde{\rho}}'/{\tilde{\rho}})^{\epsilon}|\cI(U_{\tilde{\rho}})|.
		\end{equation}
		\eqref{jeciu4u5ut9i5t0o50to60-y9} is now a consequence of this and \eqref{jirjfrueuegutoguortuig}.
		\\
		\\
		Step 7. Verification of \eqref{frkfreopgiprtogipotihpoyi}.
		
		By \eqref{eq: interval2} and our choice of $b$ we infer that $f(b)\leq b(\epsilon+\epsilon^4)$. Thus,  for each $T\in \cT_1$,
		\begin{equation}
			\label{fkoprepfoirepogiro[gip]}
			|Y'(T)|_{{\tilde{\rho}}}=\delta^{-f(b)}\le \delta^{-b(\epsilon+\epsilon^4)}= (\frac{1}{{\tilde{\rho}}})^{\epsilon+\epsilon^4}.
		\end{equation}
		Fix any $U_{\tilde{\rho}}\in\bar{\cU}_{\tilde{\rho}}$. Then, as in Step 6, taking ${\tilde{\rho}}'={\tilde{\rho}}$
		$$|\cI(U_{\tilde{\rho}})| \delta^{ 2\epsilon^4} |\cT_p|\cdot |\cT_p|_{\delta/{\tilde{\rho}}}^{-1}\lesssim |X'(U_{\tilde{\rho}})|\lesssim |\cT(U_{\tilde{\rho}})|\frac{|Y'(T)|}{|Y'(T)|_{\tilde{\rho}}}.$$
		Since by \eqref{fjkrfj rji jgi oj}
		$$|\cI(U_{\tilde{\rho}})|\ge \delta^{\epsilon^4} \frac{|Y'(T)|}{|Y'(T)|_{{\tilde{\rho}}}},$$
		we find that
		\begin{equation}
			\label{jefjreiofjriejpiotegjpiort}
			|\cT(U_{\tilde{\rho}})|\gtrsim \delta^{ 3\epsilon^4} |\cT_p|\cdot |\cT_p|_{\delta/{\tilde{\rho}}}^{-1}.
		\end{equation}
		Another double counting shows that
		$$
		|\bar{\cU}_{\tilde{\rho}}|\min_{U_{\tilde{\rho}}\in\bar{\cU}_{\tilde{\rho}}}|\cT(U_{\tilde{\rho}})|\le |\{(T,U_{\tilde{\rho}})\in\cT_1\times \bar{\cU}_{\tilde{\rho}}:\;U_{\tilde{\rho}}\subset T\text{ and }U_{\tilde{\rho}}\cap Y''(T)\not=\emptyset\}|\le |\cT_1||Y'(T)|_{\tilde{\rho}},
		$$
		which when combined with \eqref{fkoprepfoirepogiro[gip]} and \eqref{jefjreiofjriejpiotegjpiort} shows that
		$$|\bar{\cU}_{\tilde{\rho}}||\cT_p|\lesssim |\cT_1||\cT_p|_{\delta/{\tilde{\rho}}}\delta^{-4\epsilon^4-\epsilon}.$$
		This gives \eqref{frkfreopgiprtogipotihpoyi}, as $|\cU_{{\tilde{\rho}},p}|=|\cT_p|_{\delta/{\tilde{\rho}}}.$
	\end{proof}
	
	\section{Proof of Theorem~\ref{thm: main}}
	\label{sec7}
	The parameter $s\in(0,\frac12]$ will be fixed throughout this section.
	
	The argument  will recycle ideas and estimates established earlier in the paper. Parts of the presentation will be kept slightly less formal. Here is one example. With the notation from \eqref{defst}, we need to prove that for each $\upsilon>0$ we have $\cS\cT(\delta,1,1)\lesssim_\upsilon \delta^{-\upsilon}$. The bootstrapping argument we will present in this section essentially establishes a bound of the form (we are oversimplifying a bit)
	$$\cS\cT(\delta,1,1)\le C_{\epsilon,1}(\frac1\delta)^{C_{\epsilon,2}}\cS\cT(\rho_\epsilon,C_{\epsilon,3},C_{\epsilon,3}),$$
	for each $\epsilon>0$, for some $\rho_\epsilon$ quantitatively larger than $\delta$. The constant $C_{\epsilon,3}$ (and part of $C_{\epsilon,2}$) arises due to the application of results in Section \ref {sec4}. The  relevant thing is that $\lim_{\epsilon\to 0}C_{\epsilon,2}=0$, which we continue to write as $C_{\epsilon,2}=o_{\epsilon}(1)$. Note however that this inequality trivially gets upgraded  to
	$$\cS\cT(\delta,C_{\epsilon,3},C_{\epsilon,3})\le C_{\epsilon,1}(\frac1\delta)^{C_{\epsilon,2}}\cS\cT(\rho_\epsilon,C_{\epsilon,3},C_{\epsilon,3}).$$
	This may be iterated to get 
	$$\cS\cT(\delta,C_{\epsilon,3},C_{\epsilon,3})\lesssim_{\epsilon}\delta^{o_\epsilon(1)},$$ which in turn implies the desired bound $\cS\cT(\delta,1,1)\lesssim_\upsilon \delta^{-\upsilon}$. For this reason,  we will ignore the distinction between $\cS\cT(\delta,1,1)$ and $\cS\cT(\delta,C_{\epsilon,3},C_{\epsilon,3})$ in the forthcoming argument. To this end, we denote $\cS\cT(\delta,1,1)$ by $\cS\cT(\delta)$. Moreover, for extra convenience, we may test inequality \eqref{defst} using only  $(1,1)$-sets $\cT$ (Definition \ref{KK}) that are maximal in size, that is, sets satisfying $|\Lambda|\sim \delta^{-s}$ and $|\cT_\theta|\sim \delta^{s-1}$ for each $\theta\in\Lambda$. This can be made precise using the methods from Section \ref{sec4}. This will be needed in order to facilitate the application of Proposition \ref{actthis}. 
	\smallskip
	
	In order to simplify the iteration in Step 3 below, we will use the fact that $\cS\cT(\rho)\lesssim \cS\cT(\delta)$ if $\delta<\rho$. We sketch a heuristic argument for this. Fix $r_\rho$ and a configuration $\cT(\rho)$ of $\rho$-tubes $T_\rho$ as in Theorem \ref{thm: main}. We call $\Lambda(\rho)\subset\mathbb{S}^1$ the associated collection of $\rho$-arcs. For each $I\in\Lambda(\rho)$ we consider a (rescaled) $(\delta/\rho,s)$-set $\Lambda_I$ of $\delta$-intervals inside $I$,  with cardinality $(\rho/\delta)^s$. The set $\Lambda(\delta)=\cup_{I\in\Lambda(\rho)}\Lambda_I$ is a $(\delta,s)$-set with cardinality $\sim \delta^{-s}$. For each $T_\rho\in \cT(\rho)$ in the direction $I$ and for each $J\in \Lambda_I$ we consider a (rescaled) $(\delta/\rho, 1-s)$-set $\cT(T_\rho,J)$ of $\delta$-tubes $T$ in the direction $J$, lying inside $T_\rho$ and with cardinality $(\rho/\delta)^{1-s}$. The union $\cT$ of these tubes satisfies the requirements of Theorem \ref{thm: main} at scale $\delta$. Fix a $\rho$-square $\bp_\rho$ intersecting some $T_\rho$. Since $T_\rho$ contains $\sim \rho/\delta$ tubes $T\in\cT$, the corresponding $\delta\times \rho$ segments $T\cap\bp_\rho$ have a total volume of $\sim \rho^2$, comparable to the area of $\bp_\rho$. A random choice of the tubes $T$ inside $T_\rho$ will allow us to assume that a large fraction of (the $\delta$-squares in) $\bp_\rho$ intersects at least one such $T$. Thus, for generic $\bp_\rho\in\cP_{r_\rho}(\cT(\rho))$, a large fraction of its $\delta$-squares will intersect $\sim r_\rho$ tubes $T\in\cT$. This leads to
	$$|\cP_{r_\rho}(\cT(\rho))|(\rho/\delta)^2\lesssim |\cP_{\sim r_\rho}(\cT)|\le \cS\cT(\delta)\frac{|\cT|^2}{(\sim r_\rho)^3}\sim \cS\cT(\delta)\frac{\delta^{-2}}{r_\rho^3},$$
	showing that $|\cP_{ r_\rho}(\cT(\rho))|\lesssim \cS\cT(\delta)\frac{|\cT(\rho)|^2}{r_\rho^3}$, as desired.
	\smallskip

	We need to verify that for each $\upsilon>0$
	\begin{equation}
		\label{desiredfinally}
		\cS\cT(\delta)\lesssim_\upsilon\delta^{-\upsilon}.\end{equation}To achieve this, we fix $r_0\ge 1$, $\delta>0$	
	and an arbitrary collection $\cT$ as in Theorem~\ref{thm: main}. We aim to  prove the upper bound  $$|\cP_{r_0}(\cT)|\lessapprox \frac{|\cT|^2}{r^3}\sim \frac{\delta^{-2}}{r_0^3}.$$
	There are several steps.
	\\
	\\
	Step 1. Uniformization:
	
	Fix $\epsilon>0$. Let $\epsilon_0$ satisfy \eqref{riogitgoitophiophi}.

	We will assume $\cT$ and $\Lambda$ are $\epsilon_0$-uniform. This will incur negligible $\delta^{-O(\epsilon_0)}$-losses. We will shortly see that these are  consistent with the bound \eqref{difjiougireugitugoudoiii} we aim to prove.
	
	By considering a $\delta^{\epsilon_0}$ smaller collection of squares  $\cP\subset \cP_{r_0}(\cT)$, we may assume as before (see the proof of Theorem \ref{thm: delta-s2}$\implies$Theorem \ref{thm: delta-s}) that the set of tubes through each $p\in\cP$ contains an $\epsilon_0$-uniform  subset $\cT_p$ with a universal branch function. In particular, call $r$ the common values of $|\cT_p|$. This produces another affordable  $\delta^{-\epsilon_0}$-loss, this time in the value of $r_0$. The best we can say is that
	\begin{equation}
		\label{ffjkrgprthpyojpopo}
		r\gtrsim r_0\delta^{\epsilon_0}.
	\end{equation}
	
	By an additional refinement we may also assume that $\cP$ is $\epsilon_0$-uniform, at the cost of yet another acceptable $\delta^{-\epsilon_0}$ loss. 
	Write  $P=|\cP|$. Since $\epsilon_0$ is a function of $\epsilon$, all $\epsilon_0$-uniform sets will also be approximate $\epsilon$-uniform, see Remark \ref{approxunifsets}. All implicit constants will depend only on $\epsilon$.
	\\
	\\
	Step 2: Scale inflation.

	We apply Theorem \ref{justonegood} to $\cT_p$. In this application of the theorem, we only need to work with the interval $[A_1,A_2]$. The numbers  $A_2$ and $t_1$ are independent of $p$. Recall that
	$$A_2\ge \epsilon^{-1}\epsilon_0,\;\;t_1\le \log_{1/\delta}r+\epsilon.$$

	We write $\rho_0=\delta^{1-A_2}$ and $\psi(\epsilon)=\epsilon^{-1}\epsilon_0=\epsilon^{2\epsilon^{-1}-1}$. We have that
	\begin{equation}
		\label{koig9igoi0i0}
		\delta^{-\epsilon_0}\le (\frac{\rho_0}{\delta})^{\epsilon\psi(\epsilon)}.\end{equation}
	\\
	\\
	Step 3: Induction on scales.

	We will prove that
	\begin{equation}
		\label{difjiougireugitugou}
		\begin{split}
			P&\le\frac{|\cT|^2}{r^3} C_{\epsilon}(\log\delta^{-1})^{O(1)}\big{\{}(\rho_0/\delta)^{a_\epsilon}\max_{\delta\le \rho\le \rho_0}(\rho_0/\rho)^{t_1-\frac12} [\cS\cT(\rho_0)\cS\cT(\rho)]^{1/2}
			\\
			&+\max_{  \delta (\rho_0/\delta)^{c_\epsilon}\lesssim \tilde{\delta}\le \rho_0}({\tilde{\delta}}/{\delta})^{b_\epsilon}\cS\cT(\tilde{\delta})\big{\}}.
		\end{split}
	\end{equation}
	The two terms on the right-hand side trace their origins to the two items in Theorem \ref{thm: delta-s}.
	The first comes from Item (1), the second from Item (2).
	
	The absolute  constants
	$a_\epsilon,b_\epsilon,c_\epsilon$ satisfy $\lim_{\epsilon\to 0}a_\epsilon,b_\epsilon=0$ and $c_\epsilon>0$. Moreover,
	$$\lim_{\epsilon\to0}\frac{\epsilon\psi(\epsilon)}{c_\epsilon}=0.$$
	See \eqref{inforrtrei} and \eqref{krgor[]oto5=-o3-=}.
	When combined with \eqref{koig9igoi0i0}, it shows that the $\delta^{-O(\epsilon_0)}$ losses are negligible. More precisely,
	$$\delta^{-O(\epsilon_0)}\lesssim (\frac{\tilde{\delta}}{\delta})^{o_\epsilon(1)},\text{ whenever } \delta (\rho_0/\delta)^{c_\epsilon}\lesssim \tilde{\delta}.$$
	This together with the estimate $|\cP_{r_0}(\cT)|\lesssim \delta^{-O(\epsilon_0)}P$,   \eqref{ffjkrgprthpyojpopo} and \eqref{difjiougireugitugou} leads to
	
	\begin{equation}
		\label{difjiougireugitugoudoi}
		\begin{split}
			|\cP_{r_0}(\cT)|&\le\frac{|\cT|^2}{r_0^3} C_{\epsilon}(\log\delta^{-1})^{O(1)}\big{\{}(\rho_0/\delta)^{a_\epsilon}\max_{\delta\le \rho\le \rho_0}(\rho_0/\rho)^{t_1-\frac12} [\cS\cT(\rho_0)\cS\cT(\rho)]^{1/2}
			\\
			&+\max_{  \delta (\rho_0/\delta)^{c_\epsilon}\lesssim \tilde{\delta}\le \rho_0}({\tilde{\delta}}/{\delta})^{b_\epsilon}\cS\cT(\tilde{\delta})\big{\}}.
		\end{split}
	\end{equation}
	The constants $C_\epsilon$, $a_\epsilon,b_\epsilon$ have changed slightly to incorporate the $\delta^{-O(\epsilon_0)}$ loss, but their properties remain unchanged.
	The parameters $\rho_0,t_1$ depend on $r$, thus on $r_0$,  but in a uniform way. More precisely, using our chief assumption that $s\le \frac12$, we find that $r\lesssim \delta^{-1/2}$, and thus  $t_1\le \frac12+o_\epsilon(1)$ (cf. (iv) in Theorem \ref{justonegood}). Since also  $\rho_0/\delta=\delta^{-A_2}\ge \delta^{-\psi(\epsilon)}$ with $\psi(\epsilon)$ independent of $r$,  inequality \eqref{difjiougireugitugoudoi} in fact gives the ($r_0$-independent) inequality (with a new $a_\epsilon$ that still goes to zero)
	$$\cS\cT(\delta)\le C_\epsilon(\log\delta^{-1})^{O(1)}\max_{\rho_0\ge \delta^{1-\psi(\epsilon)}} \big{\{} (\rho_0/\delta)^{a_\epsilon} [\cS\cT(\rho_0)\cS\cT(\delta)]^{1/2}$$
	$$+\max_{  \delta (\rho_0/\delta)^{c_\epsilon}\lesssim \tilde{\delta}\le \rho_0}({\tilde{\delta}}/{\delta})^{b_\epsilon}\cS\cT(\tilde{\delta})\big{\}}.$$
	To streamline a bit the inequality,  in the first line we have used that $\cS\cT(\rho)\lesssim \cS\cT(\delta)$.
	
	If the first line dominates, then we have $\cS\cT(\delta)\lesssim (\log\delta^{-1})^{O(1)} (\rho_0/\delta)^{o_\epsilon(1)}\cS\cT(\rho_0)$. Thus, the above inequality can be rewritten as
	\begin{align*}
		\cS\cT(\delta)&\le C_{\epsilon} (\log\delta^{-1})^{O(1)}\max_{\rho_0\ge \delta^{1-\psi(\epsilon)}}\big{\{} (\rho_0/\delta)^{a_\epsilon} \cS\cT(\rho_0)\\&
		+\max_{  \delta (\rho_0/\delta)^{c_\epsilon}\lesssim \tilde{\delta}\le \rho_0}({\tilde{\delta}}/{\delta})^{b_\epsilon}\cS\cT(\tilde{\delta})\big{\}}.
	\end{align*}
	Call $d_\epsilon=\psi(\epsilon)c_\epsilon>0$. Note that $\rho_0/\delta,\tilde{\delta}/\delta\gtrsim \delta^{-d_\epsilon}$. The above inequality implies that for each $\epsilon$ and $\delta$
	\begin{equation}
		\label{difjiougireugitugoudoiii}
		\cS\cT(\delta)\le C_{\epsilon} \max_{  \tilde{\delta}/\delta\gtrsim \delta^{-d_\epsilon}}({\tilde{\delta}}/{\delta})^{e_\epsilon}\cS\cT(\tilde{\delta}),
	\end{equation}
	for some $e_\epsilon\to0$.  The log terms have been absorbed into $({\tilde{\delta}}/{\delta})^{e_\epsilon}$.
	
	Note the delicate nature of this inequality.
	It contains no generic $\delta^{o_\epsilon(1)}$-losses, only the carefully calibrated term   $(\tilde{\delta}/\delta)^{e_\epsilon}$.
	This by itself is enough to  conclude \eqref{desiredfinally}  via a bootstrapping argument, as follows.
	
	Let $\upsilon_0$ be the infimum of the following  nonempty set (this is an interval)
	$$\{\upsilon>0:\;\exists\;C_\upsilon<\infty\text{ such that } \cS\cT(\delta)\le C_\upsilon\delta^{-\upsilon}\text{ for each }\delta\le 1\}.$$
	Assume for contradiction that $\upsilon_0>0$. This allows us to pick $\epsilon$ sufficiently small so that $e_\epsilon<\upsilon_0/2$. Then we pick $\upsilon$ larger than, but sufficiently close to $\upsilon_0$, so that $\upsilon(1-\frac{d_\epsilon}{2})<\upsilon_0$.
	We use $\cS\cT(\tilde{\delta})\le C_\upsilon\tilde{\delta}^{-\upsilon}$ in \eqref{difjiougireugitugoudoiii} with the $\epsilon$ picked above and get
	$$\cS\cT(\delta)\le C_\epsilon C_\upsilon(\delta^{-1})^{\upsilon-(\upsilon-e_\epsilon)d_\epsilon}.$$
	Since $\upsilon-(\upsilon-e_\epsilon)d_\epsilon<\upsilon(1-\frac{d_\epsilon}{2})<\upsilon_0$, this leads to a contradiction.
	\\
	\\
	Step 4: Proof of \eqref{difjiougireugitugou}. This occupies the remainder of this section.

	There are $\sim \delta^{-A_2(t_1+o_\epsilon(1))}$ tubes $\bT\in \cT_p^{\delta/\rho_0}\subset \cT^{\delta/\rho_0}$. We write $\cT_p\cap\bT$ for the collection $\cT_p[\bT]$ of tubes  $T\in \cT_p$ lying inside $\bT$. Due to uniformity, $|\cT_p\cap \bT|$ is either zero or $\sim \tilde{r}$. This $\tilde{r}$ is independent of $p$, but whether   $|\cT_p\cap \bT|$ is zero or $\sim \tilde{r}$ depends on $p$ (for each fixed $\bT$).
	We have that
	\begin{equation}
		\label{jewie9r93rr8948t958t9}
		\tilde{r}= |\cT_p^{\rho_0/\delta}|\sim  r(\delta/\rho_0)^{t_1+o_\epsilon(1)}
	\end{equation}
	and $|\cT_p\cap \bT|\sim \tilde{r}$ for  $\sim \frac{r}{\tilde{r}}$ of the tubes $\bT\in \cT^{\delta/\rho_0}$.
	
	Throughout the argument we may afford $(\log\delta^{-1})^{O(1)}$ losses, denoted by the symbol $\lessapprox$. These will arise in a few places, due to pigeonholing and the application of the auxiliary results from  Section \ref{sec4} (see also Remark \ref{logsareneeded}).
	
	We tile each $\bT\in \cT^{\delta/\rho_0}$ with  $\delta\times \rho_0$-segments $U$ having the same orientation as $\bT$. The length $\rho_0$ of these segments is informed by the fact that for each maximal cluster of tubes $T\subset\bT$
	intersecting a fixed square in the core $\frac12\bT$ of $\bT$, the intersection of the tubes in the cluster is a $\sim \delta\times \rho_0$-segment.
	
	For each $p\in\cP$ call $\cU_p=\cU_{\rho_0,p}$ the collection of all such segments containing $p$, one for each $\bT\in\cT_p^{\delta/\rho_0}$.
	Call $\cU=\cup_{p\in\cP}\cU_p$. Recall that
	\begin{equation}
		\label{fkrofipriprigpigp}
		|\cU_p|\sim r/\tilde{r}.
	\end{equation}
	
	We partition $[0,1]^2$ into squares $\bp_{\rho_0}$ with side length $\rho_0$, and call $\cU[\bp_{\rho_0}]$ ($\cP[\bp_{\rho_0}]$) those segments from $\cU$ (squares in $\cP$) that lie inside $\bp_{\rho_0}$. Write
	$$V_{\bp_{\rho_0}}=\frac{\sum_{p\in \cP[\bp_{\rho_0}]}|\cU_p|}{|\cU[\bp_{\rho_0}]|}.$$
	Recall the function $\varphi$ from Theorem \ref{justonegood} and the function $\psi$ from \eqref{koig9igoi0i0}.	We let  $h(\epsilon)$ be a positive quantity converging to zero slowly enough as $\epsilon\to 0$, such that
	\begin{equation}
		\label{inforr}
		\lim_{\epsilon\to 0}\frac{\varphi(\epsilon)}{h(\epsilon)^{7/4}}=0,
	\end{equation}
	and
	\begin{equation}
		\label{inforrtrei}
		\lim_{\epsilon\to 0}\frac{\epsilon\psi(\epsilon)}{h(\epsilon)^{7/4}}=0.
	\end{equation}
	We let $\cP_{1}$ be those $p\in\cP$ that lie inside a $\bp_{\rho_0}$ with $V_{\bp_{\rho_0}}\lesssim  (\rho_0/\delta) ^{2h(\epsilon)}$. Then $\cP_2=\cP\setminus\cP_1$.
	\\
	\\
	\noindent \textbf{Case 1.} Estimates for $|\cP_1|$.

	Each $U\in\cU$ is contained inside at least $\tilde{r}$ tubes $T$,  coming from exactly one $\bT\in\cT^{\delta/\rho_0}$.

	Observe that $\cT[\bT]$, after parabolic rescaling by $\rho_0/\delta$, becomes a $(O(1),O(1))$-set (recall Definition \ref{KK}), with scale $\delta$ replaced by $\rho_0$. This rescaling maps $U$ to a $\rho_0$-square and the collection $\cT[\bT]$ to the collection of $\rho_0\times 1$ tubes $\cT_{resc}(\bT)$.
	Proposition \ref{cor: mainKT} implies that
	\begin{equation}
		\label{iiiihhhhh2}
		|\cP_{\ge \tilde{r}}(\cT_{resc}(\bT))|\lessapprox \cS\cT(\rho_0) \rho_0^{-1}\frac{|\cT[\bT]|_{\delta}}{\tilde{r}^3}.
	\end{equation}
	On the other hand, using \eqref{fkrofipriprigpigp} we find
	\begin{equation}
		\label{iiiihhhhh1}
		\frac{r}{\tilde{r}}|\cP_1|\sim  \sum_{V_{\bp_{\rho_0}}\lesssim  (\rho_0/\delta) ^{2h(\epsilon)}} V_{\bp_{\rho_0}}\cdot |\cU[\bp_{\rho_0}]|\le (\rho_0/\delta) ^{o_\epsilon(1)}   \sum_{\bT\in \cT^{\delta/\rho_0}} |\cP_{\ge \tilde{r}}(\cT_{resc}(\bT))|.
	\end{equation}

	Using  \eqref{jewie9r93rr8948t958t9},  \eqref{iiiihhhhh2} and \eqref{iiiihhhhh1} we find after summation in $\bT\in\cT^{\delta/\rho_0}$
	\begin{align*}
		|\cP_1|& \lessapprox \cS\cT(\rho_0) \rho_0^{-1}(\rho_0/\delta)^{o_\epsilon(1)}  \frac{|\cT|}{r\tilde{r}^2}\\
		&\sim\cS\cT(\rho_0) (\frac{\rho_0}\delta)^{-1+o_\epsilon(1)}\frac{|\cT|^2}{r\tilde{r}^2} \text{, since   }|\cT|\sim \delta^{-1}\\
		&\sim \cS\cT(\rho_0) (\frac{\rho_0}{\delta})^{2t_1-1+o_\epsilon(1)}\frac{|\cT|^2}{r^3}.
	\end{align*}
	This verifies \eqref{difjiougireugitugou} in this case, as $|\cP_r(\cT)|$ is dominated by the first term in  \eqref{difjiougireugitugou}, with the maximum evaluated at $\rho=\delta$. Indeed, this follows by taking the geometric average of the estimate from above, with the trivial estimate $P\le \cS\cT(\delta)\frac{|\cT|^2}{r^3}.$
	\\
	\\
	
	\noindent \textbf{Case 2}.
	Estimates for $|\cP_2|$ .
	
	Here is a brief overview of the argument in this case.
	We first pursue the two-ends reduction from Section \ref{sec:new}.
	This will create the first intermediate scale, called $\rho$. At this scale Theorem \ref{thm: delta-s} becomes applicable. It leads to a dichotomy that we analyze in Case 2(a) and Case 2(b). In the first case we get another intermediate scale $\Delta$. The application of Lemma \ref{lem: high-low} at scale $\Delta$ creates the third intermediate scale $\tilde{\delta}$. It will be important to verify that all these scales are quantitatively larger than the initial scale $\delta$.
	\medskip
	
	Here are the details. We apply the following procedure to each  $\bp_{\rho_0}$ with  $V_{\bp_{\rho_0}}\gtrsim  (\rho_0/\delta) ^{2h(\epsilon)}$. We rescale $\bp_{\rho_0}$ by $1/\rho_0$ and map it to $[0,1]^2$. The segments $U\in\cU[\bp_{\rho_0}]$ will become $\delta\rho_0^{-1}\times 1$ tubes $T$. We apply Theorem \ref{twoendreduc} to these tubes, and the collection of rescaled squares in $\cP[\bp_{\rho_0}]$. The scale $\delta$ in Theorem \ref{twoendreduc} is replaced with $\delta\rho_0^{-1}$. The hypothesis \eqref{wef[porpogioigporti]} is satisfied with $\epsilon$ replaced by $h(\epsilon)$. Then Theorem \ref{twoendreduc} delivers a scale $\tilde{\rho}$ such that, according to \eqref{jirjfrueuegutoguortuig},
	$$\tilde{\rho}\ge (\frac{\delta}{\rho_0})^{1-h(\epsilon)}.$$
	Subject to a logarithmic loss we may and will assume that this scale is the same for each $\bp_{\rho_0}$.
	When we rescale back, this scale becomes $\rho:=\tilde{\rho}\rho_0$.
	Note that
	\begin{equation}
		\label{twoendindisg2oweifruiirugidoi}
		\frac{\rho}{\delta}\ge (\frac{\rho_0}\delta)^{h(\epsilon)}.
	\end{equation}
	For each $p\in\cP$, let $\cU_{1,p}$ be the collection of all $\delta\times \rho$ mini-segments $U_1$ containing $p$, one for each $\bT_1\in\cT_p^{\delta/\rho}$.

	Partition $\bp_{\rho_0}$ into $\rho$-squares $\bp_\rho$. Since $\cP$ is uniform, each $\bp_\rho$ intersecting $\cP$ will contain roughly the same number of squares $\cP[\bp_\rho]$ from $\cP$. For each such $\bp_\rho$, we call $\cU_1[\bp_\rho]=\cup_{p\in\bp_\rho}\cU_{1,p}$. We apply Theorem \ref{thm: delta-s} to the $\times \rho^{-1}$ rescaled version of $\bp_\rho$. The scale $\delta$ there is replaced with $\delta/\rho$. The collections $\cT_p$ in Theorem \ref{thm: delta-s} are the rescaled copies of the mini-segments $\cU_{1,p}[\bp_\rho]$. The $\epsilon$ in that theorem is replaced with $\epsilon':=h(\epsilon)$. Let us note the following.
	\\
	\\
	$\bullet$ The two-ends hypothesis \eqref{ ifj iojiojio} is satisfied due to \eqref{jeciu4u5ut9i5t0o50to60-y9}.
	\\
	\\
	$\bullet$  Due to \eqref{jeciu4u5ut9i5t0o50to60-y9hhytj7}, the hypothesis \eqref{o.k} is satisfied with $\eta=(\epsilon')^3$, for at least a $(\delta/\rho)^\eta$ fraction of the squares $\bp_\rho$. This is good enough for us, as each $\bp_\rho$ contains the same number of $p\in\cP_2$, and we will see that $(\delta/\rho)^\eta$  is a negligible loss.
	\\
	\\
	$\bullet$ $\cU_{1,p}$ can be identified with $\cT_p^{\delta/\rho}$. Theorem \ref{justonegood} (iii) together with Lemma \ref{tradim} imply that its $1/\rho$-rescaled copy  is a $(\delta/\rho,t_1, (\rho_0/\delta)^{\varphi(\epsilon)})$-set.
	Combining \eqref{inforr}   and \eqref{twoendindisg2oweifruiirugidoi}
	we conclude that this is   a $(\delta/\rho,t_1, (\rho/\delta)^{o_\epsilon(1)})$-set. Thus, it is also a $(\delta/\rho,t_1, (\rho/\delta)^{o_{\epsilon'}(1)})$-set.
	\\
	\\
	$\bullet$ We let $\bar{\cU}_1[\bp_\rho]$ be the subset of $\cU_1[\bp_\rho]$ obtained by rescaling back to $\bp_\rho$ the collection $\bar{\cU}_{\tilde{\rho}}$ provided by  Theorem \ref{twoendreduc}. Then \eqref{frkfreopgiprtogipotihpoyi} implies that for a $\gtrsim 1$ fraction of the contributing  $\bp_\rho\subset\bp_{\rho_0}$ we have
	\begin{equation}
		\label{kjrgirigutiguitruh}
		(\delta/\rho)^{o_\epsilon(1)}V_{\bp_{\rho_0}}\lesssim \frac{\sum_{p\in\cP[\bp_\rho]}|\cU_{1,p}|}{|\bar{\cU}_1[\bp_\rho]|}.\end{equation}
	This is because the numerator on the right-hand side has the same value for all contributing $\bp_\rho$ (since $\cP$ is uniform).
	\\
	\\
	Call $\cP_{\rho,gen}$ the collection of all $\bp_\rho$ with $\bp_\rho\cap\cP_2\not=\emptyset$, that satisfy \eqref{o.k} (after rescaling) and \eqref{kjrgirigutiguitruh}. With another logarithmic loss, we may also assume that
	\begin{equation}
		\label{crifjiregjitgirt hioujryio}
		\frac{\sum_{p\in\cP[\bp_\rho]}|\cU_{1,p}|}{|\bar{\cU}_1[\bp_\rho]|}\sim W\end{equation}
	for each $\bp_\rho\in\cP_{\rho,gen}$.
	
	We have
	\begin{equation}
		\label{bigchunk}
		|\cP_2|\lessapprox (\rho/\delta)^{h(\epsilon)^3}\sum_{\bp_\rho\in\cP_{\rho,gen}}|\bp_\rho\cap\cP_2|.
	\end{equation}
	
	Theorem~\ref{thm: delta-s} provides a dichotomy for each $\bp_\rho\in\cP_{\rho,gen}$, and we split the analysis in two subcases.
	\\
	\\
	\textbf{Case 2(a)}: Assume  Item $(2)$   of Theorem~\ref{thm: delta-s} happens for at least half of $\bp_\rho$. This means that there is a scale
	\begin{equation}
		\label{ifiofriufrirr09-02} \delta(\frac{\rho}{\delta})^{\sqrt{h(\epsilon)}}\le \Delta\le \rho
	\end{equation}
	such that at least a $(\delta/\rho)^{ O(h(\epsilon))}$-fraction $\cP_{\Delta,\bp_\rho}$ of $\cP_2\cap \bp_\rho$
	is covered by squares $B_\Delta$ satisfying
	\begin{equation}
		\label{pocirfireigig0}
		|\cP_2\cap B_\Delta|\gtrsim (\frac{\Delta}{\delta})^{2-t_1+h(\epsilon)^{1/4}}.\end{equation}
	We have, using \eqref{ifiofriufrirr09-02}
	\begin{equation}
		\label{thirdloss}
		|\cP_{\Delta,\bp_\rho}|\gtrsim (\delta/\Delta)^{\sqrt{h(\epsilon)}}|\cP_2\cap \bp_\rho|.
	\end{equation}

	We now focus our analysis inside a fixed $B_\Delta$, in preparation for the application of Lemma \ref{lem: high-low}. There are $r_3\sim |\cT_p^{\delta/\Delta}|$ distinct $\delta\times \Delta$-segments that intersect each $p\in\cP'\cap B_\Delta$. Recall that $\cT_p^{\delta/\rho_0}$ is a $(\delta/\rho_0,t_1,(\rho_0/\delta)^{\varphi(\epsilon)})$-set. By Lemma \ref{tradim}    $r_3\gtrsim (\delta/\rho_0)^{\varphi(\epsilon)}(\frac{\Delta}{\delta})^{t_1}.$ Using \eqref{twoendindisg2oweifruiirugidoi}, \eqref{ifiofriufrirr09-02}, then \eqref{inforr}  we find
	$$({\rho_0}/\delta)^{\varphi(\epsilon)}\le (\Delta/\delta)^{\frac{\varphi(\epsilon)}{h(\epsilon)^{3/2}}}\le (\Delta/\delta)^{\frac12h(\epsilon)^{1/4}},$$
	and thus $r_3\gtrsim (\Delta/\delta)^{t_1-\frac12h(\epsilon)^{1/4}}$.
	When combined with \eqref{pocirfireigig0} it shows that
	$$|\cP_2\cap B_\Delta|\gtrsim\frac1{r_3} (\frac{\Delta}{\delta})^{2+\frac12h(\epsilon)^{1/4}}.$$
	We may apply Lemma \ref{lem: high-low} to the $1/\Delta$-rescaled copy of $B_\Delta$, with  $\beta=\frac12h(\epsilon)^{1/4}$. We find a scale
	\begin{equation}
		\label{ifiofriufrirr09-03}
		\delta(\Delta/\delta)^{\beta/2}\lesssim \tilde{\delta}\le \Delta
	\end{equation}
	such that for at least a half of $p\in \cP_2\cap B_\Delta$, the heavy ball $B(c_p,\tilde{\delta})$ (we ignore the extra $(\Delta/\delta)^{\upsilon}$ in the definition of the radius, as it is negligible) intersects $\gtrsim r_3\tilde{\delta}/\delta$ of the $\delta\times \Delta$-segments. Each of these segments intersects  $\sim r/r_3$ tubes $T\in\cT$, since each $p$ intersects $\sim r$ tubes in $T\in\cT$, and the tubes corresponding to distinct segments are themselves distinct. We thus have
	$$\gtrsim r/r_3\times r_3\tilde{\delta}/\delta=r\tilde{\delta}/\delta$$
	tubes $T\in\cT$ intersecting $B(c_p,\tilde{\delta})$.
	
	We summarize \eqref{twoendindisg2oweifruiirugidoi}, \eqref{ifiofriufrirr09-02} and \eqref{ifiofriufrirr09-03}
	\begin{equation}
		\label{summary}
		\rho_0/\delta\lesssim (\rho/\delta)^{\frac{1}{h(\epsilon)}}\lesssim (\Delta/\delta)^{\frac{1}{h(\epsilon)^{3/2}}}\lesssim (\tilde{\delta}/\delta)^{\frac{4}{h(\epsilon)^{7/4}}}.
	\end{equation}

	Recall $\cT$ is (approximate) $\epsilon$-uniform. Let $N=|\cT[\bT_{\tilde{\delta}}]|$ be the number of tubes $\cT$ inside a nonempty $\bT_{\tilde{\delta}}\in\cT^{\tilde{\delta}}$ (this number is roughly the same, within $O_\epsilon(1)$-losses). Then the number $\br$ of $\bT_{\tilde{\delta}}\in\cT^{\tilde{\delta}}$ that intersect such a heavy $B(c_p,\tilde{\delta})$ is $\gtrsim \frac{r\tilde{\delta}}{N\delta}$. Also, there are $\sim |\cT|/N$ thick tubes $\bT_{\tilde{\delta}}\in\cT^{\tilde{\delta}}$.

	By Proposition~\ref{actthis} applied at scale $\tilde{\delta}$ with $\br\gtrsim \frac{r\tilde{\delta}}{N\delta}$ we find that the number of essentially distinct balls $B(c_p,\tilde{\delta})$ is
	$$\lessapprox \cS\cT(\tilde{\delta})\tilde{\delta}\frac{|\cT^{\tilde{\delta}}|^3}{(r\tilde{\delta}/N\delta)^3}\sim \cS\cT(\tilde{\delta})\tilde{\delta}(\delta/\tilde{\delta})^3\frac{|\cT|^3}{r^3}.$$
	Since, trivially, each $B(c_p,\tilde{\delta})$ contains at most $(\tilde{\delta}/\delta)^2$ $\delta$-squares, it follows that (recalling that $|\cT|\lesssim \delta^{-1}$)
	\begin{equation}
		\label{stillfinee}
		|\bigcup_{\bp_\rho}\cP_{\Delta,\bp_\rho}|\lessapprox (\tilde{\delta}/\delta)^2\cS\cT(\tilde{\delta})\tilde{\delta}(\delta/\tilde{\delta})^3\frac{|\cT|^3}{r^3}\sim \cS\cT(\tilde{\delta})\frac{|\cT|^2}{r^3}.
	\end{equation}
	Also, combining this with \eqref{bigchunk}, \eqref{thirdloss}, and then with  \eqref{summary}  we get
	$$|\cP_2|\lessapprox (\rho/\delta)^{h(\epsilon)^3}(\Delta/\delta)^{\sqrt{h(\epsilon)}}\cS\cT(\tilde{\delta})\frac{|\cT|^2}{r^3}\lessapprox(\tilde{\delta}/\delta)^{o_\epsilon(1)}\cS\cT(\tilde{\delta})\frac{|\cT|^2}{r^3}.$$
	From the three inequalities in \eqref{summary} we deduce that
	\begin{equation}
		\label{krgor[]oto5=-o3-=}
		\tilde{\delta}/\delta\gtrsim (\rho_0/\delta)^{\frac14h(\epsilon)^{7/4}}.
	\end{equation}
	To summarize, if we are in Case 2(a), we get
	$$|\cP_2|\lessapprox\max_{\tilde{\delta}\gtrsim \delta (\rho_0/\delta)^{c_\epsilon}}\left((\tilde{\delta}/\delta)^{b_\epsilon}\cS\cT(\tilde{\delta})\right)\frac{|\cT|^2}{r^3},$$
	for some $b_\epsilon\to0$, as $\epsilon\to0$, and with $c_\epsilon=\frac14h(\epsilon)^{7/4}$. This verifies \eqref{difjiougireugitugou}.
	\\
	\\
	\textbf{Case 2(b):}
	Suppose Item $(1)$  of Theorem~\ref{thm: delta-s}  happens for at least half of (and we will casually assume this holds for each)  $\bp_\rho\in \cP_{\rho,gen}$. We no longer need to be precise with quantifying the $o_\epsilon(1)$ exponents. As a general rule, $(\rho_0/\delta)^{o_\epsilon(1)}$-losses are acceptable in this case.

	We will derive two estimates for $|\cP_2|,$ and then we derive an upper bound for $W$.
	\\
	\\
	\textbf{First estimate:}

	Using \eqref{kjrgirigutiguitruh},
	\eqref{crifjiregjitgirt hioujryio}
	and
	\eqref{bigchunk} we get
	\begin{equation}
		\label{ehfurueigyuigytuiy}
		|\cP_2|\frac{r}{\tilde{r}}\lessapprox(\rho/\delta)^{o_\epsilon(1)}W|\cU|\lessapprox (\rho/\delta)^{o_\epsilon(1)}W\rho_0^{-1}\cS\cT(\rho_0) \frac{|\cT|}{\tilde{r}^3},
	\end{equation}
	where the last estimate is as in Case 1. Thus
	\begin{equation}\label{eq: up1}
		|\cP_2|\lessapprox \cS\cT(\rho_0)(\rho_0/\delta)^{o_\epsilon(1)}W \rho_0^{-1} \frac{|\cT|}{{r}\tilde{r}^2}.
	\end{equation}
	\\
	\\
	\textbf{Second estimate:}
	
	We double count incidences between mini-segments  $\cU_1=\cup_{\bp_\rho\in\cP_{\rho,gen}}\cU_{1}[\bp_{\rho}]$  and the squares in $\cup_{\bp_\rho\in\cP_{\rho,gen}}\cP[\bp_\rho]$. Each $p$ is incident to each $U_1\in\cU_{1,p}$.
	
	Recall that $|\cU_{1,p}|=|\cT_p^{\delta/\rho}|$, and this number is independent of $p$. Let $\tilde{r}_1$ be such that
	$$|\cU_{1,p}|=r/\tilde{r}_1.$$
	It follows from \eqref{bigchunk} that
	\begin{equation}\label{eq: narrow}
		|\cP_2|{r}/{\tilde{r}_1} \lesssim (\rho_0/\delta)^{o_\epsilon(1)} W|\cU_{1}|.
	\end{equation}

	Recall our notation  $\cT[\bp]:=\{ T\in \cT: T\cap \bp\neq \emptyset\}$.
	Subject to only a logarithmic loss, we may assume $ |\cT[\bp_\rho]|_{\rho}\sim \br$ for each $\bp_\rho \in \cP_{\rho,gen}$ and some fixed $\br$. More precisely, the value $\br$ corresponds to a subcollection of $ \cP_{\rho,gen}$  that contains a logarithmic fraction of $\cP_2$.
	
	By Proposition \ref{actthis}
	\[
	|\cP_{\rho,gen}| \lessapprox \cS\cT(\rho)\rho \frac{|\cT|_{\rho}^3}{\br^3}.
	\]
	We get that
	$$
	|\{(\bp_\rho,\bT_\rho)\in\cP_{\rho,gen}\times \cT^\rho:\;\bT_\rho\cap\bp_\rho\not=\emptyset\}|\lessapprox \cS\cT(\rho)\rho \frac{|\cT|_{\rho}^3}{\br^2} \sim \cS\cT(\rho) \rho \frac{|\cT|_{\delta}^2\cdot|\cT|_\rho}{|\cT[\bp_{\rho}]|_{\delta}^2}.
	$$

	Let us explain the $\sim$ from above. Recall we assumed $\cT$ is uniform. The collection of tubes $T$ in $\cT[\bp_{\rho}]$ is the disjoint union of the tubes  $\cT[\bT_{\rho}]$ inside the  $\sim\br$ fat tubes $\bT_{\rho}$ intersecting $\bp_{\rho}$. On the dual side (where uniformization is performed) the tubes in any given $\cT[\bT_{\rho}]$ coincide with the $\delta$-squares inside a fixed $\rho$-square. Their number is either zero or $|\cT|_\delta/|\cT|_{\rho}$. Thus
	$$ |\cT[\bp_{\rho}]|_{\delta}\sim \br|\cT|_\delta/|\cT|_{\rho}.$$
	We conclude that
	\begin{equation}
		\label{oeieif9if9rif9ri}
		|\{(\bp_\rho,T)\in\cP_{\rho,gen}\times \cT:\;T\cap\bp_\rho\not=\emptyset\}|\lessapprox  \cS\cT(\rho) \rho \frac{|\cT|_{\delta}^3}{|\cT[\bp_{\rho}]|_{\delta}^2}.
	\end{equation}
	We now double count the intersections between the collection $\cU_1$ and the tubes in $\cT$. Each $U_1\in\cU_1$ must be in some $\cU_{1,p}$, and thus it is contained in $|\cT_p|/|\cT_p^{\delta/\rho}|=\tilde{r}_1$ tubes $T\in\cT_p$. Thus
	$$\tilde{r}_1|\cU_1|\lesssim |\{(U_1,T)\in\cU_1\times \cT:\;U_1\subset T\}|\lessapprox  \cS\cT(\rho) \rho \frac{|\cT|_{\delta}^3}{|\cT[\bp_{\rho}]|_{\delta}^2}.$$
	This gives an upper bound for $|\cU_1|$. Combining this with \eqref{eq: narrow} we find
	\begin{equation}\label{eq: up2}
		|\cP_2|\lessapprox (\rho_0/\delta)^{o_\epsilon(1)} \cS\cT(\rho)\frac{W\rho}{r}\frac{|\cT|_{\delta}^3}{|\cT[\bp_{\rho}]|_{\delta}^2}.
	\end{equation}
	Taking the geometric average of \ref{eq: up1} and \ref{eq: up2} we find
	\begin{equation}\label{eq: up}
		|\cP_2|\lessapprox[\cS\cT(\rho_0)\cS\cT(\rho)]^{1/2}(\rho_0/\delta)^{o_\epsilon(1)}(\rho/\rho_0)^{1/2} W\frac{|\cT|_{\delta}^2}{r\tilde{r}|\cT[\bp_{\rho}]|_{\delta}}.
	\end{equation}
	\\
	\\
	\textbf{Upper bound for $W$:}
	
	Since Item (1)  of Theorem~\ref{thm: delta-s}  holds for $\bp_\rho\in\cP_{\rho,gen}$ we have
	$$|\cP\cap\bp_\rho|\lessapprox (\delta/\rho)^{t_1+o_\epsilon(1)}\frac{|\bar{\cU}_{1}[\bp_\rho]|^2}{(r/\tilde{r}_1)^2}.$$
	Then double counting incidences between squares and mini-segments shows that
	\begin{equation}\label{eq: A}
		W\lessapprox (\frac{\delta}{\rho})^{t_1+o_\epsilon(1)}  \frac{|\bar{\cU}_{1}[\bp_{\rho}]| }{r/\tilde{r}_1} \lessapprox (\frac{\delta}{\rho})^{t_1+o_\epsilon(1)} \frac{|\cT[\bp_{\rho}]|_{\delta}}{r}.
	\end{equation}

	The second inequality uses again the fact that each mini-segment is contained in at least $\tilde{r}_1$ tubes $T$. Tubes corresponding to distinct mini-segments lying inside  $\bp_\rho$ must  themselves be distinct.
	
	Combining  \eqref{eq: up} with \eqref{eq: A} we find
	\begin{align*}
		|\cP_2|&\lessapprox[\cS\cT(\rho_0)\cS\cT(\rho)]^{1/2} \frac{|\cT|^2}{r^3}\frac{r}{\tilde{r}}(\frac{\delta}{\rho})^{t_1}(\frac\rho{\rho_0})^{1/2+o_\epsilon(1)}\\&\lessapprox[\cS\cT(\rho_0)\cS\cT(\rho)]^{1/2}(\rho_0/\rho)^{t_1-\frac12} (\rho_0/\delta)^{o_\epsilon(1)}\frac{|\cT|^2}{r^3}.
	\end{align*}
	The last inequality is due to \eqref{jewie9r93rr8948t958t9}. This  verifies  \eqref{difjiougireugitugou} in Case 2(b).
	
	\section{Proof of Theorem \ref{t1}}
	\label{sec9}
	
	We will rely on the framework introduced in \cite{DD}, which we briefly recall below.
	
	Fix a smooth $\psi:\R^2\to \R$ satisfying $0\le \psi\le 1_{B(0,1)}$. Write $\psi_\delta(\xi)=\delta^{-2}\psi(\frac{\xi}{\delta})$ and $\mu_\delta=\mu*\psi_\delta$. Then \eqref{e1} can  be reformulated equivalently as
	\begin{equation}
		\label{e2}
		\|\widehat{\mu_{1/R}}\|_{L^6(\R^2)}^6\lesssim_\epsilon R^{2-2s-\frac{s}{4}+\epsilon}.
	\end{equation}
	Since $\mu_{1/R}$ is supported on the $1/R$-neighborhood of $\Gamma$,
	we partition this neighborhood into essentially rectangular regions $\theta$ with dimensions roughly $R^{-1/2}$ and $1/R$. We also partition the $1/R^{1/2}$-neighborhood into  essentially rectangular regions $\tau$ with dimensions roughly $R^{-1/4}$ and $R^{-1/2}$. Each $\theta$ sits inside some $\tau$.
	\smallskip
	
	Let $D,M,P$ be dyadic parameters, with $D$ and $P$  integers. We decompose $\mu$
	$$\mu=\sum_{D,M,P}\mu_{D,M,P},$$
	with each $\mu_{D,M,P}$ satisfying the following properties:
	\\
	\\
	(P1) for each $\theta$, either $\mu_{D,M,P}(\theta)=\mu(\theta)\sim MR^{-s}$, or  $\mu_{D,M,P}(\theta)=0$. We call $\theta$ {\em active} if it falls into the first category.
	\\
	\\
	(P2) each $\tau$ contains either $\sim P$ or no active $\theta$. We call $\tau$ {\em active} if it fits into the first category.
	\\
	\\
	(P3) there are $\sim D$ active $\theta's$, or equivalently, there are $\sim D/P$ active $\tau's$.
	\medskip
	
	We note that these properties together with \eqref{e4} force the following inequalities
	\begin{equation}
		\label{e5}
		M\lesssim R^{s/2}
	\end{equation}
	\begin{equation}
		\label{e6}
		DM\lesssim R^s
	\end{equation}
	\begin{equation}
		\label{e7}
		MP\lesssim R^{3s/4}.
	\end{equation}
	\smallskip

	For the rest of the argument we fix $D,M,P$ and let $F=R^{s-2}\widehat{\mu_{D,M,P}*\psi_{1/R}}$ be the $L^\infty$ upper normalized version, $\|F\|_\infty\lesssim 1$. We now recall the relevant estimates from \cite{DD}. Invoking the triangle inequality, it suffices to prove that
	\begin{equation}
		\label{e3}
		\|F\|_{L^6}^6\lesssim_\epsilon R^{4s-10-\frac{s}4+\epsilon}.
	\end{equation}
	Write $\mu_\theta$ for the restriction of $\mu$ to an active  $\theta$.
	Let $F_\theta=R^{s-2}\widehat{\mu_\theta*\psi_{1/R}}$ be the Fourier restriction of $F$ to $\theta$, so that
	$F=\sum_\theta F_\theta$.
	Then
	$$F_\theta=\sum_{T\in\cT_\theta}\langle F_\theta,W_T\rangle W_T+O(R^{-100}).$$
	Each $T$ is a rectangle (referred to as tube) with dimensions $R^{1/2}$ and $R$, with the long side pointing in the direction normal to $\theta$. The term $O(R^{-100})$ is negligible, and can be dismissed. Its role is to ensure that all $T$ sit inside, say,  $[-R,R]^2$. The wave packet $W_T$ is $L^2$ normalized, has spectrum inside (a slight enlargement of) $\theta$, and is essentially concentrated spatially in (a slight enlargement of) $T$. Thus, with $\chi_T$ being a smooth approximation of $1_T$, we have
	$$|W_T|\lesssim R^{-3/4}\chi_T.$$

	Given the dyadic parameter $\lambda$ we write
	$$\cT_{\lambda,\theta}=\{T\in\cT_\theta:\;|\langle F_\theta,W_T\rangle|\sim \lambda\}.$$
	We have
	\begin{equation}
		\label{e15}
		\lambda\le \lambda_{max}\sim MR^{-\frac54}.
	\end{equation}
	\eqref{e3} boils down to proving
	\begin{equation}
		\label{e3gtprgptphyphyt==}
		\|\sum_{T\in \cup_\theta\cT_{\lambda,\theta}}\langle F_\theta,W_T\rangle W_T\|_{L^6}^6\lesssim_\epsilon R^{4s-10-\frac{s}4+\epsilon}.
	\end{equation}
	For each rectangle $B$ with dimensions $\Delta$ and $R$, and with orientation identical to that of the tubes in $\cT_\theta$, we have  the estimate
	\begin{equation}
		\label{e12}
		|\{T\in\cT_{\lambda,\theta}:\; T\subset B\}|\lesssim (\frac{\Delta}{\sqrt{R}})^{1-s}\frac{MR^{\frac{s-5}{2}}}{\lambda^2}.
	\end{equation}
	In particular,
	\begin{equation}
		\label{e11}
		|\cup_{\theta}\cT_{\lambda,\theta}|\lesssim \frac{MD}{\lambda^2R^2}.
	\end{equation}
	Each arc of length $r\lesssim 1$ on $\Gamma$ intersects at most
	\begin{equation}
		\label{rifururgurtgrthi90hi90}
		\lesssim (r\sqrt{R})^s\frac{R^{s/2}}{M}\end{equation}
	many $\theta$.
	
	We derive two estimates.
	\\
	\\
	\textbf{First estimate:} We first use decoupling (\cite{BD}) into caps $\theta$ (combined with property (P3)), then \eqref{e11} to get
	\begin{align*}
		\|\sum_{\theta\text{ active}}\sum_{T\in\cT_{\lambda,\theta}}\langle F_\theta,W_T\rangle W_T\|_6^6&\lesssim_\epsilon R^\epsilon D^2\sum_{\theta\text{ active}}\|\sum_{T\in\cT_{\lambda,\theta}}\langle F_\theta,W_T\rangle W_T\|_6^6\\&\lesssim_\epsilon R^\epsilon D^2R^{-9/2}\lambda^6\sum_{\theta\text{ active}}\|\sum_{T\in\cT_{\lambda,\theta}}\chi_T\|_6^6\\&\lesssim_\epsilon R^\epsilon D^2R^{-9/2}\lambda^6R^{3/2}|\cup_{\theta}\cT_{\lambda,\theta}|\\&\lesssim_\epsilon R^\epsilon\frac{MD^3\lambda^4}{R^{5}}.
	\end{align*}Using  \eqref{e15} we conclude with
	\begin{equation}
		\label{oirpofiigtigihoyt0ho0}
		\|\sum_{\theta\text{ active}}\sum_{T\in\cT_{\lambda,\theta}}\langle F_\theta,W_T\rangle W_T\|_6^6\lesssim_\epsilon R^\epsilon\frac{M^5D^3}{R^{10}}.\end{equation}
	\textbf{Second estimate:}
	\begin{defn}Consider a collection consisting of $\sim r'/P'$ many caps $\tau$, each of which contains $\sim P'$ many $\theta$. Call $\cD$ the collection of all these $\theta$, and note that $|\cD|\sim r'$.  	
		Let $\cB\subset \cup_{\theta\in\cD}\cT_{\lambda,\theta}$ be a collection of tubes $T$  intersecting a fixed  $\sqrt{R}$-square $q$. Note that there are $O(1)$ many possible $T$ for each $\theta\in\cD$.
		
		We call $\cB$ a bush with data $(P',r',q)$.
	\end{defn}	
	We recall the following bush estimate proved in \cite{DD}, that uses decoupling into the larger caps $\tau$.

	\begin{prop}
		\label{p5}
		Let $\cB$ be a bush with data $(P',r',q)$.  Then
		\begin{equation}
			\label{dokoregotrgo[p]oh}
			\|\sum_{T\in \cB}\langle F_\theta,W_T\rangle W_T\|_{L^6(q)}^6\lesssim_\epsilon R^{\epsilon-7/2}(r')^3(P')^2\lambda^6.\end{equation}
	\end{prop}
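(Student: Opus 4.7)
The plan is to apply $\ell^2L^6$ Bourgain--Demeter decoupling at the scale of the $\tau$-caps, reducing the problem to a pointwise-plus-orthogonality estimate on $q$ for each individual $\tau$-piece.

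Write $g=\sum_{T\in\cB}c_T W_T$ with $c_T=\langle F_\theta,W_T\rangle$ and $|c_T|\sim\lambda$, and group
\[
f_\tau=\sum_{\theta\subset\tau}\sum_{T\in\cB\cap\cT_{\lambda,\theta}}c_T W_T,
\]
so that $g=\sum_\tau f_\tau$ over the $\sim r'/P'$ active caps $\tau$. Each $f_\tau$ has Fourier support in (a slight enlargement of) $\tau$. Since $q$ is a $\sqrt{R}$-square, which matches the dual scale of $\tau=R^{-1/4}$, the parabolic $\ell^2L^6$ decoupling theorem of Bourgain--Demeter applies and gives
\[
\|g\|_{L^6(q)}^2\lesssim_\epsilon R^\epsilon\sum_\tau\|f_\tau\|_{L^6(w_q)}^2,
\]
where $w_q$ is a Schwartz envelope of $1_q$. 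Raising to the third power and applying Cauchy--Schwarz over the $\sim r'/P'$ caps yields
\[
\|g\|_{L^6(q)}^6\lesssim_\epsilon R^\epsilon\,(r'/P')^2\sum_\tau\|f_\tau\|_{L^6(w_q)}^6.
\]

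For each $\tau$, $f_\tau$ is a sum of $\sim P'$ wave packets, one per cap $\theta\subset\tau$, all corresponding to tubes $T$ passing through $q$. The pointwise envelope $|W_T|\lesssim R^{-3/4}\chi_T$ together with $|c_T|\sim\lambda$ yields
\[
\|f_\tau\|_{L^\infty(q)}\lesssim P'\lambda R^{-3/4}.
\]
For an $L^2$ bound I would invoke the $L^2$-orthogonality of the wave packets $\{W_T\}_{T\in\cB_\tau}$ (the caps $\theta\subset\tau$ have disjoint Fourier supports), together with the cross-sectional estimate $\|W_T\|_{L^2(q)}^2\lesssim |T\cap q|\cdot R^{-3/2}\lesssim R\cdot R^{-3/2}=R^{-1/2}$, to conclude
\[
\|f_\tau\|_{L^2(w_q)}^2\lesssim P'\lambda^2 R^{-1/2}.
\]
Interpolating,
\[
\|f_\tau\|_{L^6(w_q)}^6\leq \|f_\tau\|_{L^\infty(q)}^4\,\|f_\tau\|_{L^2(w_q)}^2\lesssim (P'\lambda R^{-3/4})^4\cdot P'\lambda^2 R^{-1/2}=(P')^5\lambda^6 R^{-7/2}.
\]

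Summing over the $\sim r'/P'$ caps $\tau$ and plugging into the decoupling bound:
\[
\|g\|_{L^6(q)}^6\lesssim_\epsilon R^\epsilon\,(r'/P')^2\cdot(r'/P')(P')^5\lambda^6 R^{-7/2}=R^\epsilon\,(r')^3(P')^2\lambda^6 R^{-7/2},
\]
which is exactly the desired inequality. The main obstacle I anticipate is making the local $L^2$-orthogonality rigorous: the wave packets are globally orthogonal (disjoint Fourier supports inside $\tau$), but restricting to the sharp cutoff $1_q$ creates cross terms $\int_q W_T\overline{W_{T'}}$ that need to be controlled. I expect these to be handled by passing to the smooth Schwartz envelope $w_q$ (where Plancherel applies cleanly after writing $W_T=\sum_{T'}\langle W_T,W_{T'}\rangle_{w_q}$-style expansions), or by exploiting the rapid off-tube decay of the $W_T$ so that the extra mass from cross terms and tails is absorbed into the $R^\epsilon$ factor.
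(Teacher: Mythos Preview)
The paper does not prove this proposition in the text; it is quoted from \cite{DD} with the single comment that the argument ``uses decoupling into the larger caps $\tau$''. Your proposal follows exactly this route: $\ell^2L^6$ decoupling at the $\tau$-scale (legitimate since $q$ has the dual scale $\sqrt{R}$ to $\tau$), Cauchy--Schwarz over the $\sim r'/P'$ active $\tau$, and then $L^\infty$--$L^2$ interpolation on each $f_\tau$. The arithmetic is correct and matches the stated bound.

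The only point you flag as an obstacle, local $L^2$-orthogonality on $w_q$, is handled by the standard almost-orthogonality argument and is not a gap. Multiplying by the smooth weight $w_q$ convolves Fourier supports with a bump of width $\sim R^{-1/2}$; since adjacent $\theta$'s inside a fixed $\tau$ are separated by $\sim R^{-1/2}$ in the tangential direction, the smeared supports $\theta * B(0,R^{-1/2})$ have bounded overlap. Hence in $\sum_{\theta,\theta'} c_T\overline{c_{T'}}\int w_q W_T\overline{W_{T'}}$ only $O(1)$ neighbors contribute for each $\theta$, and Cauchy--Schwarz bounds these by the diagonal, giving $\|f_\tau\|_{L^2(w_q)}^2\lesssim \sum_\theta |c_T|^2\|W_T\|_{L^2(w_q)}^2\lesssim P'\lambda^2 R^{-1/2}$ as you claimed.
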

	Now, for each dyadic $r\ge 1$, let
	$$\cQ_r=\{q:\;q\text { intersects }\sim r\text{ tubes in } \cup_\theta\cT_{\lambda,\theta}\}.$$
	To estimate $|\cQ_r|$, we note that, due to \eqref{e12} and \eqref{rifururgurtgrthi90hi90}, the rescaled copies of the tubes $T\in \cup_\theta\cT_{\lambda,\theta}$ satisfy the hypotheses of Theorem \ref{thm: mainbettt} with $\delta=R^{-1/2}$, $K_1\sim \frac{R^{s/2}}{M}$ and $K_2\sim \frac{MR^{\frac{s-5}{2}}}{\lambda^2}.$ Combining this proposition with \eqref{e11} shows that
	\begin{equation}
		\label{dokoregotrgo[p]ohhpoihopyih}
		|\cQ_r|\lesssim_\epsilon \frac{R^{\frac12+\epsilon}}{r^3}(\frac{R^{s-\frac52}}{\lambda^2})^2\frac{MD}{\lambda^2R^2}.
	\end{equation}
	For each $q\in\cQ_r$, the collection $\cB(q)=\{T\in \cup_\theta\cT_{\lambda,\theta}:\;q\cap T\not=\emptyset\}$ can be partitioned into $\lessapprox 1$ many bushes with data $(P',r',q)$ satisfying $P'\le P$, $r'\le r$. Thus, combining \eqref{dokoregotrgo[p]oh} with \eqref{dokoregotrgo[p]ohhpoihopyih} we find
	$$\|\sum_{T\in \cup_\theta\cT_{\lambda,\theta}}\langle F_\theta,W_T\rangle W_T\|_{L^6(\cup_{q\in\cQ_r q})}^6\lesssim_\epsilon R^{\epsilon-7/2}r^3P^2\lambda^6R^{\frac12}(\frac{R^{s-\frac52}}{\lambda^2})^2\frac{MD}{\lambda^2R^2}\frac1{r^3}=R^{2s+\epsilon}\frac{MDP^2}{R^{10}}.$$
	Since there are $\lessapprox 1$ dyadic values of $r$, we get our second estimate
	\begin{equation}
		\label{lkporkforigoitpgoitopgi[tpo]}
		\|\sum_{T\in \cup_\theta\cT_{\lambda,\theta}}\langle F_\theta,W_T\rangle W_T\|_{L^6}^6\lesssim_\epsilon R^{2s+\epsilon}\frac{MDP^2}{R^{10}}.
	\end{equation}
	When $M\sim R^{s/2}$, \eqref{lkporkforigoitpgoitopgi[tpo]} together with \eqref{e6} and \eqref{e7} imply that
	$MDP^2\lesssim R^{\frac{3s}{2}}$, leading to the extra saving $R^{s/4}$ (that means, the exponent $\frac{7s}{2}-10+\epsilon$) in \eqref{e3gtprgptphyphyt==}. But when $M$ is small, \eqref{lkporkforigoitpgoitopgi[tpo]} by itself gives no useful upper bound. Instead, taking the geometric average of \eqref{oirpofiigtigihoyt0ho0} and \eqref{lkporkforigoitpgoitopgi[tpo]} we find
	$$\|\sum_{T\in \cup_\theta\cT_{\lambda,\theta}}\langle F_\theta,W_T\rangle W_T\|_{L^6}^6\lesssim_\epsilon R^{s+\epsilon-10}(MD)^2MP\lesssim_\epsilon R^{4s-10-\frac{s}{4}+\epsilon}.$$
	The last inequality follows from \eqref{e6} and \eqref{e7}. The extreme case for which our argument is tight is when $M\sim R^{\frac{3s}{8}}$, $D\sim R^{\frac{5s}{8}}$, $P\sim R^{\frac{3s}{8}}$.
	\begin{remark}
		\label{jhhdhfuihugururgurguiy}
		When the measure is $AD$-regular, $M$ is forced to be $\sim R^{s/2}$. Moreover, as explained in \cite{DD}, there is an extra gain in the bush inequality \eqref{dokoregotrgo[p]oh} that comes from  estimating non trivially the energy of flat AD-regular sets. As a result, the exponent in \eqref{e1} for such measures can be lowered to  $2-2s-\frac{s}{2}-\beta$, for some unspecified $\beta>0$. Similarly, in the context of Theorem \ref{wejfiuriofuriurioeug} we get
		$$\mathbb{E}_{3,\delta}(S)\lesssim \delta^{-\frac{7s}{2}+\beta}.$$
	\end{remark}

\end{document}